\newtheorem{Theorem}{Theorem}\newtheorem{Lemma}{Lemma}\newtheorem{Proposition}{Proposition}
\theoremstyle{definition}\newtheorem{Definition}{Definition}\newtheorem{Remark}{Remark}\newtheorem{Example}{Example}\newtheorem{Case}{Case}
\begin{document}

\title{Spectral behavior of preconditioned non-Hermitian multilevel block Toeplitz matrices with matrix-valued symbol}

\author{Marco Donatelli, Carlo Garoni, Mariarosa Mazza, Stefano Serra-Capizzano, Debora Sesana\footnote{Department of Science and High Technology, University of Insubria, Via Valleggio 11, 22100 Como, Italy \newline \emph{Email addresses}: \texttt{marco.donatelli@uninsubria.it} (Marco Donatelli), \texttt{carlo.garoni@uninsubria.it} (Carlo Garoni), \newline \texttt{mariarosa.mazza@uninsubria.it} (Mariarosa Mazza), \texttt{stefano.serrac@uninsubria.it} (Stefano Serra-Capizzano), \newline \texttt{debora.sesana@uninsubria.it} (Debora Sesana)}}

\maketitle

\begin{abstract}
This note is devoted to preconditioning strategies for non-Hermitian multilevel block Toeplitz linear systems associated with a multivariate Lebesgue integrable matrix-valued symbol.
In particular, we consider special preconditioned matrices, where the preconditioner has a band multilevel block Toeplitz structure, and we complement known results on the localization of the spectrum with global distribution results for the eigenvalues of the preconditioned matrices. In this respect, our main result is as follows. Let $I_k:=(-\pi,\pi)^k$, let $\mathcal M_s$ be the linear space of complex $s\times s$ matrices, and let $f,g:I_k\to\mathcal M_s$ be functions whose components $f_{ij},\,g_{ij}:I_k\to\mathbb C,\ i,j=1,\ldots,s,$ belong to $L^\infty$. Consider the matrices $T_n^{-1}(g)T_n(f)$, where $n:=(n_1,\ldots,n_k)$ varies in $\mathbb N^k$ and $T_n(f),T_n(g)$ are the multilevel block Toeplitz matrices of size $n_1\cdots n_ks$ generated by $f,g$. Then $\{T_n^{-1}(g)T_n(f)\}_{n\in\mathbb N^k}\sim_\lambda g^{-1}f$, i.e. the family of matrices $\{T_n^{-1}(g)T_n(f)\}_{n\in\mathbb N^k}$ has a global (asymptotic) spectral distribution described by the function $g^{-1}f$, provided $g$ possesses certain properties (which ensure in particular the invertibility of $T_n^{-1}(g)$ for all $n$) and the following topological conditions are met: the essential range of $g^{-1}f$, defined as the union of the essential ranges of the eigenvalue functions $\lambda_j(g^{-1}f),\ j=1,\ldots,s$, does not disconnect the complex plane and has empty interior. This result generalizes the one obtained by Donatelli, Neytcheva, Serra-Capizzano in a previous work, concerning the non-preconditioned case $g=1$. The last part of this note is devoted to numerical experiments, which confirm the theoretical analysis and suggest the choice of optimal GMRES preconditioning techniques to be used for the considered linear systems.
\end{abstract}

\noindent\textbf{Keywords: }Toeplitz matrix; spectral distribution; eigenvalue; Toeplitz preconditioning

\noindent\textbf{2010 MSC: } 15B05, 15A18, 65F08

\section{Introduction}\label{sec1}

Let $\mathcal M_s$ be the linear space of the complex $s\times s$ matrices. We say that a function $f:G\to\mathcal M_s$, defined on some measurable set $G\subseteq\mathbb R^k$, is in $L^p(G)$/measurable/continuous, if its components $f_{ij}:G\to\mathbb C,\ i,j=1,\ldots,s,$ are in $L^p(G)$/measurable/continuous. Moreover, we denote by $I_k$ the $k$-cube $(-\pi,\pi)^k$ and, for $1\le p\le\infty$, we define $L^p(k,s)$ as the linear space of $k$-variate functions $f:I_k\to\mathcal M_s$ belonging to $L^p(I_k)$.

Now fix $f\in L^1(k,s).$ The Fourier coefficients of $f$ are defined as
\begin{equation}\label{fhat}
\widehat f_j:=\frac1{(2\pi)^k}\int_{I_k}f(x){\rm e}^{-{\mathbf i}\left\langle j,x\right\rangle}dx\in\mathcal M_s, \quad j\in\mathbb Z^k,
\end{equation}
where $\mathbf i^2=-1$, $\left\langle j,x\right\rangle=\sum_{t=1}^kj_tx_t$, and the integrals in \eqref{fhat} are done componentwise. Starting from the coefficients $\widehat f_j$, we can construct the family of $k$-level (block) Toeplitz matrices generated by $f$. More in detail, if $n:=(n_1,\ldots,n_k)$ is a multi-index in $\mathbb N^k$, let $\widehat n:=\prod_{i=1}^kn_i$. Then the $n$-th Toeplitz matrix associated with $f$ is the matrix of order $s\widehat n$ given by
\begin{equation}\label{mbTm}
T_n(f)=\sum_{|j_1|<n_1}\cdots\sum_{|j_k|<n_k}\left[J_{n_1}^{(j_1)}\otimes\cdots\otimes J_{n_k}^{(j_k)}\right]\otimes\widehat f_{(j_1,\ldots,j_k)},
\end{equation}
where $\otimes$ denotes the (Kronecker) tensor product, while $J_t^{(l)}$ is the matrix of order $t$ whose $(i,j)$ entry equals $1$ if $i-j=l$ and equals zero otherwise: the reader is referred to \cite{Tillinota} for more details on multilevel block Toeplitz matrices. $\{T_n(f)\}_{n\in\mathbb N^k}$ is called the family of Toeplitz matrices generated by $f$, which in turn is called the symbol (or the generating function) of $\{T_n(f)\}_{n\in\mathbb N^k}$.

The problem considered in this paper is the numerical solution of a linear system with coefficient matrix $T_n(f)$, where $f\in L^1(k,s)$ and $n$ is a multi-index in $\mathbb N^k$ with large components $n_1,\ldots,n_k$.
Such type of linear systems arise in important applications such as Markov chains \cite{markov1,markov2} (with $k=1$ and $s>1$), in the reconstruction of signals with missing data \cite{symbol-image} (with $k=1$ and $s=2$), in the inpainting problem \cite{inpaint} (with $k=2$ and $s=2$), and of course in the numerical approximation of constant coefficient $r\times r$ systems of PDEs over $d$-dimensional domains \cite{AxB} (with $k=d$ and $s=r$).
We are interested in preconditioning $T_n(f)$ by $T_n(g)$, where $g$ is a trigonometric polynomial so that $T_n(g)$ is banded and the related linear systems are easily solvable: this is called band Toeplitz preconditioning.
In connection with Krylov methods, other popular preconditioners can be chosen in appropriate algebras of matrices with fast transforms: we refer to circulants, trigonometric and Hartley algebras, wavelet algebras, etc.
Unfortunately, general results in \cite{tyrty1,tyrty2,negablo,nsv} tell us that the performances of such preconditioners deteriorate when $k$ increases and optimal methods can be obtained only for $k=1$.
For optimal methods we mean methods such that the complexity of solving the given linear system is proportional to the cost of matrix-vector multiplication, see \cite{AxN} for a precise notion in the context of iterative methods.
Concerning the band Toeplitz preconditioning, we emphasize that the technique has been explored for $k\ge 1$ and $s=1$ in \cite{cband1,DFS,Sbit,cband2,Sopt1}, even in the (asymptotically) ill-conditioned case characterized by zeros of the symbol, but with a specific focus on the positive definite case. Specific attempts in the non-Hermitian case can be found in \cite{CChi,huckle-nonh}. Further results concerning genuine block Toeplitz structures with $s>1$ are considered in \cite{Smarko1,Smarko2}, but again for Hermitian positive definite matrix-valued symbols.
In this note, the attention is concentrated on the non-Hermitian case with general $k,s\ge 1$ and, more specifically, we are interested in the following three items.
\begin{itemize}
	\item Localization results for all the eigenvalues of $T_n^{-1}(g)T_n(f)$: the results can be found in \cite{huckle-nonh} for $f,g\in L^1(k,s)$ in the case where $g$ is a sectorial function, which implies in particular that each $T_n(g)$ is invertible (see Definition \ref{ws} for the definition of sectorial function);
	\item Spectral distribution results for the family of matrices (matrix-family) $\{T_n^{-1}(g)T_n(f)\}_{n\in\mathbb N^k}$: this is our original contribution and it represents a generalization of \cite[Theorem 1.2]{maya}, where $g$ is assumed to be identically equal to $1$. We recall that general results on the distribution of the singular values of families such as $\{T_n^{-1}(g)T_n(f)\}_{n\in\mathbb N^k}$ are known in the wide context of Generalized Locally Toeplitz sequences, see \cite{glt-vs-fourier} and references therein;
	\item A wide set of numerical experiments concerning the eigenvalue localization and the clustering properties of the matrix $T_n^{-1}(g)T_n(f)$, and regarding the effectiveness of the preconditioned GMRES, with preconditioning strategies chosen according to the theoretical indications given in the first two items.
\end{itemize}

The paper is organized as follows. Section \ref{loc} introduces the notions of weakly sectorial and sectorial function and deals with the localization results for the eigenvalues of $T_n^{-1}(g)T_n(f)$ in the case where $g$ is sectorial. Section \ref{spec} contains our main result (Theorem \ref{szego-nonherm-prec}) concerning the asymptotic spectral distribution of the matrix-family $\{T_n^{-1}(g)T_n(f)\}_{n\in\mathbb N^k}$. Finally, Section \ref{num} is devoted to numerical experiments which take advantage of the theoretical results in order to devise suitable GMRES preconditioners for linear systems with coefficient matrix $T_n(f),\ f\in L^\infty(k,s)$.

Before starting with our study, let us fix the multi-index notation that will be extensively used throughout this paper. A multi-index $m\in\mathbb Z^k$, also called a $k$-index, is simply a vector in $\mathbb Z^k$ and its components are denoted by $m_1,\ldots,m_k$. Standard operations defined for vectors in $\mathbb C^k$, such as addition, subtraction and scalar multiplication, are also defined for $k$-indices. We will use the letter $e$ for the vector of all ones, whose size will be clear from the context. If $m$ is a $k$-index, we set $\widehat m:=\prod_{i=1}^km_i$ and we write $m\to\infty$ to indicate that $\min_im_i\to\infty$. Inequalities involving multi-indices are always understood in the componentwise sense. For instance, given $h,m\in\mathbb Z^k$, the inequality $h\le m$ means that $h_l\le m_l$ for all $l=1,\ldots,k$. If $h,m\in\mathbb Z^k$ and $h\le m$, the multi-index range $h,\ldots,m$ is the set $\{j\in\mathbb Z^k:h\le j\le m\}$. We assume for the multi-index range $h,\ldots,m$ the standard lexicographic ordering:
\begin{equation}\label{ordering}
\left[\ \ldots\ \left[\ \left[\ (j_1,\ldots,j_k)\ \right]_{j_k=h_k,\ldots,m_k}\ \right]_{j_{k-1}=h_{k-1},\ldots,m_{k-1}}\ \ldots\ \right]_{j_1=h_1,\ldots,m_1}.
\end{equation}
For instance, if $k=2$ then the ordering is {\footnotesize$$(h_1,h_2),\:(h_1,h_2+1),\:\ldots,\:(h_1,m_2),\:(h_1+1,h_2),\:(h_1+1,h_2+1),\:\ldots,\:(h_1+1,m_2),\:\ldots\:\ldots,\:(m_1,h_2),\:(m_1,h_2+1),\:\ldots,\:(m_1,m_2).$$}When a multi-index $j$ varies over a multi-index range $h,\ldots,m$ (this may be written as $j=h,\ldots,m$),
it is always understood that $j$ varies from $h$ to $m$ according to the lexicographic ordering \eqref{ordering}. For instance, if $m\in\mathbb N^k$ and if $y=\left[y_i\right]_{i=e}^m$, then $y$ is a vector of size $m_1\cdots m_k$ whose components $y_i,\ i=e,\ldots,m,$ are ordered in accordance with the ordering \eqref{ordering} for the multi-index range $e,\ldots,m$. Similarly, if $Y=\left[y_{ij}\right]_{i,j=e}^m$, then $Y$ is a matrix of size $m_1\cdots m_k$ whose components are indexed by two multi-indices $i,j$, both varying over the multi-index range $e,\ldots,m$ in accordance with \eqref{ordering}.
To conclude, we point out that the multilevel block Toeplitz matrix $T_n(f)$ displayed in \eqref{mbTm} can be expressed in multi-index notation as
\begin{equation}\label{mbTm_multi-index}
T_n(f)=[\widehat f_{i-j}]_{i,j=e}^n.
\end{equation}

\section{Localization results for the eigenvalues of $T_n^{-1}(g)T_n(f)$}\label{loc}

Let us recall in this section important localization results taken from \cite{jia,huckle-nonh}. We first introduce the notions of essential range ${\cal ER}(f)$ and essential numerical range ${\cal ENR}(f)$ of a matrix-valued function $f$. In the following, for any $X\subseteq\mathbb C$, ${\rm Coh}[X]$ is the convex hull of $X$ and $d(X,z)$ is the (Euclidean) distance of $X$ from the point $z\in\mathbb C$. We denote by $\|\cdot\|$ the spectral (Euclidean) norm of both vectors and matrices. If $r\in\mathbb C$ and $\epsilon>0$, $D(r,\epsilon)$ is the disk in the complex plane centered at $r$ with radius $\epsilon$. Recall that, if $h:G\to\mathbb C$ is a complex-valued measurable function, defined on some measurable set $G\subseteq\mathbb R^k$, the essential range of $h$, $\mathcal{ER}(h)$, is defined as the set of points $r\in\mathbb C$ such that, for every $\epsilon>0$, the measure of $h^{-1}(D(r,\epsilon)):=\{t\in G:h(t)\in D(r,\epsilon)\}$ is positive. In symbols,
$$\mathcal{ER}(h):=\{r\in\mathbb C:\forall\epsilon>0,\ m_k\{t\in G:h(t)\in D(r,\epsilon)\}>0\},$$
where $m_k$ is the Lebesgue measure in $\mathbb R^k$. Note that $\mathcal{ER}(h)$ is always closed (the complement is open). Moreover, it can be shown that $h(t)\in\mathcal{ER}(h)$ for almost every $t\in G$, i.e., $h\in\mathcal{ER}(h)$ a.e.

\begin{Definition}\label{er-enr}
Given a measurable matrix-valued function $f:G\to\mathcal M_s$, defined on some measurable set $G\subseteq\mathbb R^k$,
\begin{itemize}
\item the {\em essential range} of $f$, $\mathcal{ER}(f)$, is the union of the essential ranges of the eigenvalue functions $\lambda_j(f):G\to\mathbb C$, $j=1,\ldots,s$, that is $\mathcal{ER}(f):=\bigcup_{j=1}^s\mathcal{ER}(\lambda_j(f))$;
\item the {\em essential numerical range} of $f$, $\mathcal{ENR}(f)$, is the set of points $r\in\mathbb C$ such that, for every $\epsilon>0$, the measure of $\{t\in G: \exists v\in\mathbb C^s\ \textup{with}\ \|v\|=1\ \textup{such that}\ v^*f(t)v\in D(r,\epsilon)\}$ is positive. In symbols,
$$\mathcal{ENR}(f):=\{r\in\mathbb C:\forall\epsilon>0,\ m_k\{t\in G:\exists v\in\mathbb C^s\ \textup{with}\ \|v\|=1\ \textup{such that}\ v^*f(t)v\in D(r,\epsilon)\}>0\}.$$
\end{itemize}
\end{Definition}
Note that ${\cal ER}(f)$ is closed, being the union of a finite number of closed sets. $\mathcal{ENR}(f)$ is also closed, because its complement is open. Moreover, it can be proved that, for a.e. $t\in G$, the following property holds: $v^*f(t)v\in\mathcal{ENR}(f)$ for all $v\in\mathbb C^s$ with $\|v\|=1$. In other words, $v^*fv\in\mathcal{ENR}(f)$ for all $v\in\mathbb C^s$ with $\|v\|=1$, a.e. In addition, it can be shown that $\mathcal{ENR}(f)\supseteq\mathcal{ER}(f)$. In the case $s=1$, we have $\mathcal{ENR}(f)=\mathcal{ER}(f)$.

Now we turn to the definition of sectorial function. Given a straight line $z$ in the complex plane, let $H_1$ and $H_2$ be the two open half-planes such that $\mathbb C$ is the disjoint union $H_1\bigcup z\bigcup H_2$; we call $H_1$ and $H_2$ the open half-planes determined by $z$. Moreover, we denote by $\omega(z)\in\mathbb C$ the rotation number (of modulus 1) such that $\omega(z)\cdot z=\{w\in\mathbb C:{\rm Re}(w)=d(z,0)\}$.
\begin{Definition}\label{ws}
A function $f\in L^1(k,s)$ is {\em weakly sectorial} if there exists a straight line $z$ in the complex plane with the following property: one of the two open half-planes determined by $z$, say $H_1$, is such that $\mathcal{ENR}(f)\bigcap H_1=\emptyset$ and $0\in H_1\bigcup z$.
Whenever $f\in L^1(k,s)$ is weakly sectorial, every straight line $z$ with the previous property is called a {\em separating line} for $\mathcal{ENR}(f)$. A function $f\in L^1(k,s)$ is {\em sectorial} if it is weakly sectorial and there exists a separating line $z$ such that the minimal eigenvalue of $\omega(z)f(x)+\overline{\omega(z)}f^*(x)$ is not a.e. equal to $d(z,0)$.
\end{Definition}

The following Lemma provides simple conditions that ensure a given function $f\in L^1(k,s)$ to be weakly sectorial or sectorial. We do not prove this Lemma, because it is beyond the purpose of this paper. We limit to say that the proof can be obtained using the following topological properties of convex sets: the separability properties provided by the geometric forms of the Hanh-Banach theorem, see \cite[Theorems 1.6 and 1.7]{Brezis}; the result stating that, for any convex set $C$, the closure $\overline C$ and the interior ${\rm Int}(C)$ are convex, and $\overline{{\rm Int}(C)}=\overline C$ whenever ${\rm Int}(C)$ is nonempty, see e.g. \cite[Exercise 1.7]{Brezis}.

\begin{Lemma}\label{lemma_geometrico}
Let $f\in L^1(k,s)$.
\begin{itemize}
	\item $f$ is weakly sectorial if and only if $0\notin{\rm Int}({\rm Coh}[\mathcal{ENR}(f)])$.
	\item If $0\notin\overline{{\rm Coh}[\mathcal{ENR}(f)]}$ then $f$ is sectorial. Equivalently, if $d({\rm Coh}[\mathcal{ENR}(f)],0)>0$ then $f$ is sectorial.
\end{itemize}
\end{Lemma}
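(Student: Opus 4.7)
The plan is to use the two geometric forms of the Hahn-Banach theorem in the plane $\mathbb C\cong\mathbb R^2$ to produce, or preclude, separating lines for $\mathcal{ENR}(f)$. Throughout the argument I write $C:={\rm Coh}[\mathcal{ENR}(f)]$; a real hyperplane of $\mathbb C$ is a straight line, and the two open half-spaces it determines are precisely the $H_1,H_2$ of Definition \ref{ws}. The first item will rely on the broad separation theorem \cite[Theorem 1.6]{Brezis}, while the second will use the strict separation theorem \cite[Theorem 1.7]{Brezis}.

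For the first item, necessity is immediate: if $z$ is a separating line with $\mathcal{ENR}(f)\cap H_1=\emptyset$ and $0\in z\cup H_1$, then $\mathcal{ENR}(f)\subseteq z\cup H_2$, and since $z\cup H_2$ is convex one gets $C\subseteq z\cup H_2$, hence ${\rm Int}(C)\subseteq H_2$, so $0\notin{\rm Int}(C)$. For sufficiency, assume $0\notin{\rm Int}(C)$. If ${\rm Int}(C)$ is nonempty I invoke the first geometric Hahn-Banach theorem to separate the open convex set ${\rm Int}(C)$ from $\{0\}$ by a line $z$: one closed half-plane $z\cup H_2$ contains ${\rm Int}(C)$, the other $z\cup H_1$ contains $0$. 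Using $\overline{C}=\overline{{\rm Int}(C)}\subseteq z\cup H_2$ (valid when ${\rm Int}(C)\ne\emptyset$) yields $\mathcal{ENR}(f)\cap H_1=\emptyset$. If ${\rm Int}(C)=\emptyset$, then the planar convex set $C$ lies in some line $\ell$; taking $z=\ell$ gives $\mathcal{ENR}(f)\subseteq z$, and I choose $H_1$ to be the open half-plane containing $0$ when $0\notin\ell$, otherwise the condition $0\in z$ already closes the case.

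For the second item, $0\notin\overline{C}$ (equivalently $d(C,0)>0$) allows me to apply the second geometric Hahn-Banach theorem to the disjoint closed convex $\overline{C}$ and the compact convex $\{0\}$. This produces a line $z$ that \emph{strictly} separates them, so $\overline{C}$ lies in one open half-plane $H_2$ and $0$ lies in the opposite open half-plane $H_1$. In particular $\mathcal{ENR}(f)\cap H_1=\emptyset$, so $f$ is weakly sectorial with this $z$. Rotating by $\omega(z)$, the image $\omega(z)\cdot H_2$ is the open half-plane $\{w:{\rm Re}(w)>d(z,0)\}$, and because $v^*f(t)v\in\mathcal{ENR}(f)\subseteq H_2$ for a.e.\ $t$ and all unit $v\in\mathbb C^s$, I obtain
\[
\tfrac12\,v^*\bigl(\omega(z)f(t)+\overline{\omega(z)}f^*(t)\bigr)v={\rm Re}\bigl(\omega(z)\,v^*f(t)v\bigr)>d(z,0).
\]
Minimizing over unit $v$ shows that the smallest eigenvalue of $\omega(z)f(t)+\overline{\omega(z)}f^*(t)$ is strictly larger than $2d(z,0)$ for a.e.\ $t$. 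Since $d(z,0)>0$, this is in particular not a.e.\ equal to $d(z,0)$, so $f$ is sectorial.

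The main obstacle I expect is the interplay between open and closed half-planes: the first Hahn-Banach form places only ${\rm Int}(C)$ inside a closed half-plane, and upgrading this to $\mathcal{ENR}(f)\subseteq z\cup H_2$ requires the identity $\overline{C}=\overline{{\rm Int}(C)}$, valid only when ${\rm Int}(C)$ is nonempty; the degenerate case ${\rm Int}(C)=\emptyset$ (including possible collapse to a point) has to be handled by the elementary planar argument sketched above. A secondary subtlety is matching the constant in Definition \ref{ws}: one must verify the identity ${\rm Re}(\omega(z)\,v^*fv)=\tfrac12 v^*(\omega(z)f+\overline{\omega(z)}f^*)v$, use $|\omega(z)|=1$, and translate strict separation of sets into a strict eigenvalue inequality that survives passage to the essential infimum over $t$.
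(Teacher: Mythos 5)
The paper deliberately omits a proof of this lemma, indicating only that it follows from the geometric forms of the Hahn--Banach theorem in Brezis (Theorems 1.6 and 1.7) together with the convexity facts $\overline{C},{\rm Int}(C)$ convex and $\overline{{\rm Int}(C)}=\overline{C}$ when ${\rm Int}(C)\ne\emptyset$. Your argument supplies a correct proof along exactly that suggested route, including the needed case split on whether ${\rm Int}({\rm Coh}[\mathcal{ENR}(f)])$ is empty and the rotation/Rayleigh-quotient computation verifying the sectoriality condition.
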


\begin{Theorem}\label{locsv4}
{\rm\cite{jia}} Let $f\in L^1(k,s)$ and let $d:=d({\rm Coh}[{\cal ENR}(f)],0)$.
\begin{itemize}
	\item Suppose $f$ is weakly sectorial. Then $\sup_{z\in\mathcal S}d(z,0)=\max_{z\in\mathcal S}d(z,0)=d$, where $\mathcal S$ is the set of all separating lines for $\mathcal{ENR}(f)$. Moreover, $\sigma\ge d$ for all singular values $\sigma$ of $T_n(f)$ and for all $n\in\mathbb N^k$.
	\item Suppose $f$ is sectorial and let $z$ be a separating line for $\mathcal{ENR}(f)$ such that the minimal eigenvalue of $\omega(z)f(x)+\overline{\omega(z)}f^*(x)$ is not a.e. equal to $d(z,0)$. Then $\sigma>d(z,0)$ for all singular values $\sigma$ of $T_n(f)$ and for all $n\in\mathbb N^k$.
\end{itemize}
In particular, if $f$ is sectorial then all the matrices $T_n(f),\ n\in\mathbb N^k,$ are invertible.
\end{Theorem}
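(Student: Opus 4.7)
The plan is to prove the theorem in three conceptual steps: the geometric separating-line identity, the weak singular value bound, and the strict inequality in the sectorial case.

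For the geometric claim $\sup_{z\in\mathcal S}d(z,0)=\max_{z\in\mathcal S}d(z,0)=d$, I would argue via the geometric Hahn--Banach theorem (already cited in the paper via \cite{Brezis}). Since $f$ is weakly sectorial, by Lemma \ref{lemma_geometrico} the closed convex set $\overline{\mathrm{Coh}[\mathcal{ENR}(f)]}$ does not contain $0$ in its interior. If $0$ lies outside the closure, the nearest-point projection $p_0$ of $0$ onto this closed convex set exists, and the line through $p_0$ perpendicular to the segment $[0,p_0]$ is a supporting line that is a separating line in the sense of Definition \ref{ws}, with $d(z,0)=\|p_0\|=d$. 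If $0$ belongs to the boundary, then $d=0$ and any supporting hyperplane through $0$ realizes the supremum. Conversely, every separating line has $\mathrm{Coh}[\mathcal{ENR}(f)]$ entirely on one side, so $d(z,0)\le d$, proving the identity.

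For the lower bound on singular values, fix a separating line $z\in\mathcal S$. The definition of $\omega(z)$ maps $z$ to the vertical line $\{\mathrm{Re}(w)=d(z,0)\}$, and since $\mathcal{ENR}(f)\subseteq \overline{H_2}$, we get $\mathrm{Re}(\omega(z) r)\ge d(z,0)$ for every $r\in\mathcal{ENR}(f)$. Combining this with the a.e.\ property $v^*f(x)v\in\mathcal{ENR}(f)$ for all unit $v$ (stated after Definition \ref{er-enr}), the Hermitian symbol $h_z(x):=\omega(z)f(x)+\overline{\omega(z)}f^*(x)$ satisfies $\lambda_{\min}(h_z(x))\ge 2d(z,0)$ a.e. Linearity of $T_n(\cdot)$ and the identity $T_n(f^*)=T_n(f)^*$ give
$$T_n(h_z)=\omega(z)T_n(f)+\overline{\omega(z)}T_n(f)^*=2\,\mathrm{Re}(\omega(z)T_n(f)).$$
Invoking the classical Hermitian Toeplitz lower-bound result (if $h$ is Hermitian with $\lambda_{\min}(h)\ge m$ a.e., then $\lambda_{\min}(T_n(h))\ge m$), we obtain $\lambda_{\min}(\mathrm{Re}(\omega(z)T_n(f)))\ge d(z,0)$. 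Then, for any unit $v$,
$$\|T_n(f)v\|\ge |v^*T_n(f)v|=|\omega(z)v^*T_n(f)v|\ge \mathrm{Re}(\omega(z)v^*T_n(f)v)=v^*\mathrm{Re}(\omega(z)T_n(f))v\ge d(z,0),$$
so $\sigma_{\min}(T_n(f))\ge d(z,0)$. Taking the supremum over $z\in\mathcal S$ and using step one yields $\sigma\ge d$ for every singular value $\sigma$ of $T_n(f)$.

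For the strict inequality in the sectorial case, the hypothesis is precisely that $\lambda_{\min}(h_z(x))$ is not a.e.\ equal to its lower bound $2d(z,0)$. I would then invoke the strengthened version of the Hermitian Toeplitz spectral bound: if $h$ is a Hermitian $L^1$ matrix-valued symbol whose $\lambda_{\min}(h)$ is not a.e.\ equal to its essential infimum $m$, then $\lambda_{\min}(T_n(h))>m$ strictly for every $n$. Applied to $h_z$, this gives $\lambda_{\min}(\mathrm{Re}(\omega(z)T_n(f)))>d(z,0)$, and the chain of inequalities above upgrades to $\sigma>d(z,0)$ for every singular value. The invertibility statement is then immediate, since in the sectorial case $d(z,0)\ge 0$ and strict positivity of $\sigma_{\min}$ is guaranteed (either $d(z,0)>0$, or else $d(z,0)=0$ but the strict inequality still rules out a zero singular value).

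The main obstacle is the Hermitian Toeplitz spectral bound in its two versions (weak and strict) for \emph{multilevel block} symbols in $L^1(k,s)$: the scalar unilevel versions are classical (Grenander--Szegő), but the multilevel block generalization with merely integrable matrix-valued symbol requires the careful machinery of Serra-Capizzano and co-authors. Everything else reduces to elementary convex geometry in $\mathbb R^2$ and the standard algebraic identities for $T_n(\cdot)$.
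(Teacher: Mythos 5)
Your proposal is correct and reconstructs the route actually taken in the cited reference \cite{jia}: reduce, via the rotation $\omega(z)$ and the identity $T_n\bigl(\omega(z)f+\overline{\omega(z)}f^*\bigr)=\omega(z)T_n(f)+\overline{\omega(z)}T_n(f)^*$, to the Hermitian block multilevel Szeg\H{o}-type bound on $\lambda_{\min}\bigl(T_n(h)\bigr)$, and then pass from the Hermitian part to singular values through $\sigma_{\min}(T_n(f))\ge \min_{\|v\|=1}\mathrm{Re}\bigl(\omega(z)v^*T_n(f)v\bigr)\ge\lambda_{\min}\bigl(\mathrm{Re}(\omega(z)T_n(f))\bigr)$. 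The geometric step via nearest-point projection and the supporting-line theorem is also the standard argument; the degenerate case $0\in\partial\,\mathrm{Coh}[\mathcal{ENR}(f)]$ is handled correctly, including when the convex hull has empty interior and hence lies on a line. One thing worth flagging explicitly, which you handle correctly but silently: as literally written, Definition \ref{ws} and Theorem \ref{locsv4} compare $\lambda_{\min}\bigl(\omega(z)f+\overline{\omega(z)}f^*\bigr)$ to $d(z,0)$, but the a.e.\ lower bound coming from $v^*f(x)v\in\mathcal{ENR}(f)$ is $2d(z,0)$, not $d(z,0)$; taken at face value the stated sectoriality condition would be vacuous whenever $d(z,0)>0$. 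The paper's own proof of Lemma \ref{hLoo} uses $\mathrm{Re}(\omega(z)g)=\frac12\bigl(\omega(z)g+\overline{\omega(z)}g^*\bigr)$ against the threshold $d(z,0)$, which confirms that the intended normalization is yours (threshold $2d(z,0)$ for the un-halved sum, or $d(z,0)$ for $\mathrm{Re}(\omega(z)f)$). The only ingredient you invoke without proof is the strict Hermitian block Toeplitz bound (if $\lambda_{\min}(h)\ge m$ a.e.\ and is not a.e.\ equal to $m$, then $\lambda_{\min}(T_n(h))>m$ for every $n$); this follows from the quadratic-form representation $v^*T_n(h)v=(2\pi)^{-k}\int_{I_k}P(x)^*h(x)P(x)\,dx$ with $P$ a nonzero $\mathbb C^s$-valued trigonometric polynomial, vanishing only on a null set, and is indeed the crux that requires the $L^1$ multilevel block machinery you cite.
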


We remark that, if $f\in L^1(k,s)$ and if $\widetilde f(x)$ is similar to $f(x)$ via a constant transformation $C$ (independent of $x$), that is $f(x)=C\widetilde f(x)C^{-1}$ a.e., then $\widetilde f\in L^1(k,s)$ and $T_n(\widetilde f)=(I_{\widehat n}\otimes C)^{-1}T_n(f)(I_{\widehat n}\otimes C)$ for all $n\in\mathbb N^k$ ($I_{\widehat n}$ is the identity matrix of order $\widehat n$). This result follows from the definitions of $T_n(\widetilde f),T_n(f)$, see \eqref{mbTm}, and from the properties of the tensor product of matrices.

\begin{Theorem} \label{loc-eigp-nh}{\rm \cite{huckle-nonh}}
Suppose $f,g\in L^1(k,s)$ with $g$ sectorial, and let $R(f,g):=\{\lambda\in {\mathbb C}: f-\lambda g\mbox{\ is sectorial}\}$.
Then, for any $n$, the eigenvalues of $T_n^{-1}(g)T_n(f)$ belong to $[R(f,g)]^c$, i.e. to the complementary set of $R(f,g)$.
In addition, if $\widetilde f(x)$ is similar to $f(x)$ via a constant transformation and if $\widetilde g$ is similar to $g$ via the same constant transformation, then $T_n^{-1}(g)T_n(f)$ is similar to $T_n^{-1}(\widetilde g)T_n(\widetilde f)$ by the above discussion and therefore, for any $n$, the eigenvalues of $T_n^{-1}(g)T_n(f)$ belong to $[R(\widetilde f,\widetilde g)]^c$ as well.
As a consequence, if $\cal F$ denotes the set of all pairs $(\widetilde f,\widetilde g)$ satisfying the previous assumptions, then, for any $n$, the eigenvalues of $T_n^{-1}(g)T_n(f)$ belong to $\bigcap_{(\widetilde f,\widetilde g)\in \cal F} [R(\widetilde f,\widetilde g)]^c$.
\end{Theorem}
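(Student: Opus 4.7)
The plan is to exploit linearity of the Toeplitz operator $T_n(\cdot)$ together with the invertibility part of Theorem \ref{locsv4}. The chain of reasoning is very short: $\lambda$ is an eigenvalue of $T_n^{-1}(g)T_n(f)$ if and only if $T_n^{-1}(g)T_n(f)-\lambda I$ is singular. Multiplying on the left by the invertible matrix $T_n(g)$ (which is invertible since $g$ is sectorial, by the final statement of Theorem \ref{locsv4}), this happens if and only if $T_n(f)-\lambda T_n(g)$ is singular. By linearity of the map $h\mapsto\widehat h_j$ on Fourier coefficients, and hence by the defining formula \eqref{mbTm}, one has $T_n(f)-\lambda T_n(g)=T_n(f-\lambda g)$, so $\lambda$ is an eigenvalue if and only if $T_n(f-\lambda g)$ is singular.

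Thus, to prove the main inclusion, I would take the contrapositive: assume $\lambda\in R(f,g)$, i.e.\ $f-\lambda g$ is sectorial. By Theorem \ref{locsv4} all singular values of $T_n(f-\lambda g)$ are strictly positive, so $T_n(f-\lambda g)$ is invertible, and by the equivalence above $\lambda$ cannot be an eigenvalue of $T_n^{-1}(g)T_n(f)$. Hence every eigenvalue lies in $[R(f,g)]^c$.

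For the second part I would verify the similarity relation $T_n^{-1}(g)T_n(f)=(I_{\widehat n}\otimes C)\,T_n^{-1}(\widetilde g)T_n(\widetilde f)\,(I_{\widehat n}\otimes C)^{-1}$, which follows directly from the remark recalled just above the theorem, applied separately to $f$ and to $g$, combined with the fact that $(I_{\widehat n}\otimes C)^{-1}(I_{\widehat n}\otimes C)=I$. Since similar matrices share their spectrum, the first part applied to the pair $(\widetilde f,\widetilde g)$ yields that the eigenvalues also lie in $[R(\widetilde f,\widetilde g)]^c$. Intersecting over all admissible pairs $(\widetilde f,\widetilde g)\in\mathcal F$ gives the final, sharpest enclosure
\[
\mathrm{spec}(T_n^{-1}(g)T_n(f))\subseteq\bigcap_{(\widetilde f,\widetilde g)\in\mathcal F}[R(\widetilde f,\widetilde g)]^c.
\]

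I do not foresee a serious obstacle: all the heavy lifting is already encoded in Theorem \ref{locsv4} (invertibility of Toeplitz matrices generated by sectorial symbols) and in the elementary linearity and tensor-product identities for $T_n(\cdot)$. The only points that need a careful one-line check are (i) the linearity identity $T_n(f-\lambda g)=T_n(f)-\lambda T_n(g)$, which is immediate from \eqref{fhat}--\eqref{mbTm}, and (ii) the fact that inverting $(I_{\widehat n}\otimes C)\,T_n(\widetilde g)\,(I_{\widehat n}\otimes C)^{-1}$ conjugates $T_n^{-1}(\widetilde g)$ by the same similarity, which is a standard property of matrix similarity and of $\otimes$.
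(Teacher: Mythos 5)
Your proof is correct, and in fact the paper does not prove Theorem~\ref{loc-eigp-nh} at all: it is cited from \cite{huckle-nonh} (the paper only explains, in the remark preceding the theorem, the similarity identity used in the second assertion). The route you take is precisely the one intended: reduce via linearity of $T_n(\cdot)$ to the singularity of $T_n(f-\lambda g)$, since $\det\bigl(T_n^{-1}(g)T_n(f)-\lambda I\bigr)=\det T_n^{-1}(g)\cdot\det T_n(f-\lambda g)$, and then invoke the last clause of Theorem~\ref{locsv4} to rule out singularity whenever $f-\lambda g$ is sectorial. Your handling of the second and third assertions correctly composes the conjugation identity $T_n(f)=(I_{\widehat n}\otimes C)\,T_n(\widetilde f)\,(I_{\widehat n}\otimes C)^{-1}$ for both $f$ and $g$ and observes that the spectrum is similarity-invariant, which together with the first assertion applied to $(\widetilde f,\widetilde g)$ gives the intersection bound. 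No gaps.
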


Except for the study of preconditioning strategies associated with the (preconditioned) normal equations (see e.g. \cite{CN}), whose related numerical results are rarely satisfactory in the ill-conditioned case, there are no specialized preconditioning techniques for non-Hermitian multilevel block Toeplitz matrices. Theorem \ref{loc-eigp-nh} (straightforward block extension of a theorem taken from \cite{jia}) is, to our knowledge, the first tool for devising spectrally equivalent preconditioners in the non-Hermitian multilevel block case.
We notice that in the Hermitian case there exists a wide choice of different versions of Theorem \ref{loc-eigp-nh} (see e.g. {\rm \cite{Smarko2}}).
This is the first version for the non-Hermitian block case that could be used in connection with the preconditioning.
\begin{Example}\label{Esempio}
For the sake of simplicity, set $s=2$, $j\ge e$ (we recall that $e$ is the vector of all ones), and consider, for $k=1,2$ and $x\in I_k$, the following matrix-valued functions:
\begin{equation*}
\begin{split}
f(x) & = Q(x) \left(\begin{array}{cc} \left(1-{\rm e}^{{\mathbf i}\left\langle  j,x\right\rangle}\right)\varphi_1(x) & 0 \\
0 &  \varphi_2(x)\end{array}\right) Q^{-1}(x),\\
g(x) & = Q(x) \left(\begin{array}{cc} 1-{\rm e}^{{\mathbf i}\left\langle  j,x\right\rangle} & 0 \\
0 &  1\end{array}\right) Q^{-1}(x),
\end{split}
\end{equation*}
where $\varphi_i(x),\ i=1,2,$ are real-valued and $\inf\varphi_i=r_i>0,\ \sup\varphi_i=R_i<\infty$. In this case, if $\varphi_i(x),\ i=1,2,$ and $Q(x)$ satisfy certain properties, the set $[R(f,g)]^c$ is bounded away from zero and infinity and its intersection with real line is an interval of the form $(r,R)$, with $r,R>0$. Therefore, $T_n(g)$ is an optimal preconditioner (see \cite{SS}) for $T_n(f)$.
We can use this (optimal) preconditioner in connection with classical iterative solvers 
like the Gauss-Seidel method or with methods like the GMRES \cite{SS}, whose convergence speed is strongly dependent on the localization and distribution of the eigenvalues \cite{NRT}. Note that, when the matrix $Q$ does not depend on $x$,
$$T_n(g)=(I_{\widehat n}\otimes Q)T_n(B)(I_{\widehat n}\otimes Q^{-1}),\quad\mbox{with}\quad
B(x):=\left(\begin{array}{cc} 1-{\rm e}^{{\mathbf i}\left\langle  j,x\right\rangle} & 0 \\
0 &  1\end{array}\right).$$
In this case, the solution of a linear system associated with $T_n(g)$ can be reduced to the solution of a linear system associated with $T_n(B)$ and, since $T_n(B)$ is a band lower triangular matrix, the corresponding banded linear systems can be optimally solved (both in the univariate and multivariate settings) by using a direct elimination.
If $B$ is a more general weakly sectorial trigonometric polynomial, then $T_n(B)$ is just banded and, at least in the univariate context, we recall that we can apply specialized versions of the Gaussian Elimination maintaining an optimal linear cost.
\end{Example}

Theorem \ref{loc-eigp-nh} is rather powerful but its assumptions do not seem easy to check. More precisely, a set of important problems to be considered for the practical use of Theorem \ref{loc-eigp-nh} is the following:
{\bf (a)} given $f$ regular enough, give conditions such that there exists a polynomial $g$ for which the assumptions of Theorem \ref{loc-eigp-nh} are satisfied;
{\bf (b)} let us suppose that $f$ satisfies the conditions of the first item; give a constructive way (an algorithm) for defining such a polynomial $g$. Due to the difficulty of addressing these problems, a different approach can be adopted for devising suitable preconditioners $T_n(g)$ for $T_n(f)$. More precisely, instead of looking for a precise spectral localization of $T_n^{-1}(g)T_n(f)$, we just analyze the global asymptotic behavior of the spectrum of $T_n^{-1}(g)T_n(f)$ as $n\to\infty$. As we shall see through numerical experiments in Section \ref{num}, the knowledge of the asymptotic spectral distribution of $\{T_n^{-1}(g)T_n(f)\}_{n\in\mathbb N^k}$ can indeed be useful as a guide for designing appropriate preconditioners $T_n(g)$ for $T_n(f)$.

\section{Spectral distribution results for $\{T_n^{-1}(g)T_n(f)\}_{n\in\mathbb N^k}$}\label{spec}

We begin with the definition of spectral distribution and clustering, in the sense of eigenvalues and singular values, of a sequence of matrices (matrix-sequence), and we define the area of $K$, in the case where $K$ is a compact subset of $\mathbb C$.
Then, we present the main tool, taken from \cite{maya}, for proving our main result, i.e. Theorem \ref{szego-nonherm-prec}, which provides the asymptotic spectral distribution of preconditioned multilevel block Toeplitz matrices.
Finally, Theorem \ref{szego-nonherm-prec} is stated and proved.

Before starting, let us introduce some notation. We denote by $\mathcal C_0(\mathbb C)$ and $\mathcal C_0(\mathbb R^+_0)$ the set of continuous functions with bounded support defined over $\mathbb C$ and $\mathbb R_0^+=[0,\infty)$, respectively. Given a function $F$ and given a matrix $A$ of order $m$, with eigenvalues $\lambda_j(A),\ j=1,\ldots,m,$ and singular values $\sigma_j(A),\ j=1,\ldots,m,$ we set $$\Sigma_\lambda(F,A):=\frac{1}{m}\sum_{j=1}^{m}F(\lambda_{j}(A)),\qquad \Sigma_\sigma(F,A):=\frac{1}{m}\sum_{j=1}^{m}F(\sigma_{j}(A)).$$
Moreover, ${\rm tr}(A)$ is the trace of $A$.

\begin{Definition}\label{def-distribution}
Let $f:G\to\mathcal M_s$ be a measurable function, defined on a measurable set $G\subset\mathbb R^k$ with $0<m_k(G)<\infty$.
Let $\{A_n\}$ be a matrix-sequence, with $A_n$ of size $d_n$ tending to infinity.
\begin{itemize}
	\item $\{A_n\}$ is {\em distributed as the pair $(f,G)$ in the sense of the eigenvalues,} in symbols $\{A_n\}\sim_\lambda(f,G)$, if for all $F\in\mathcal C_0(\mathbb C)$ we have
\begin{equation}\label{distribution:sv-eig}
\lim_{n\to\infty}\Sigma_\lambda(F,A_n)=\frac1{m_k(G)}\int_G\frac{\sum_{i=1}^sF(\lambda_i(f(t)))}sdt=\frac1{m_k(G)}\int_G\frac{{\rm tr}(F(f(t)))}sdt.
\end{equation}
	\item $\{A_n\}$ is {\em distributed as the pair $(f,G)$ in the sense of the singular values,} in symbols $\{A_n\}\sim_\sigma(f,G)$, if for all $F\in\mathcal C_0(\mathbb R_0^+)$ we have
\begin{equation}\label{distribution:sv-eig-bis}
\lim_{n\to\infty}\Sigma_\sigma(F,A_n)=\frac1{m_k(G)}\int_G\frac{\sum_{i=1}^sF(\sigma_i(f(t)))}sdt=\frac1{m_k(G)}\int_G\frac{{\rm tr}(F(|f(t)|))}sdt,
\end{equation}
where $|f(t)|:=(f^*(t)f(t))^{1/2}$.
\end{itemize}
If $\{A_n\}_{n\in\mathbb N^h}$ is a matrix-family (parameterized by a multi-index), with $A_n$ of size $d_n$ tending to infinity when $n\to\infty$ (i.e. when $\min_jn_j\to\infty$), we still write $\{A_n\}_{n\in\mathbb N^h}\sim_\lambda(f,G)$ to indicate that \eqref{distribution:sv-eig} is satisfied for all $F\in\mathcal C_0(\mathbb C)$, but we point out that now `$n\to\infty$' in \eqref{distribution:sv-eig} means `$\min_jn_j\to\infty$', in accordance with the multi-index notation introduced in Section \ref{sec1}. Similarly, we write $\{A_n\}_{n\in\mathbb N^h}\sim_\sigma(f,G)$ if \eqref{distribution:sv-eig-bis} is satisfied for all $F\in\mathcal C_0(\mathbb R_0^+)$, where again $n\to\infty$ means $\min_jn_j\to\infty$. We note that $\{A_n\}_{n\in\mathbb N^h}\sim_\lambda(f,G)$ (resp. $\{A_n\}_{n\in\mathbb N^h}\sim_\sigma(f,G)$) is equivalent to saying that $\{A_{n(m)}\}_m\sim_\lambda(f,G)$ (resp. $\{A_{n(m)}\}_m\sim_\sigma(f,G)$) for every matrix-sequence $\{A_{n(m)}\}_m$ extracted from $\{A_n\}_{n\in\mathbb N^h}$ and such that $\min_jn_j(m)\to\infty$ as $m\to\infty$.
\end{Definition}

For $S\subseteq\mathbb C$ and $\epsilon>0$, we denote by $D(S,\epsilon)$ the $\epsilon$-expansion of $S$, defined as $D(S,\epsilon)=\bigcup_{r\in S}D(r,\epsilon)$.

\begin{Definition}\label{def-cluster}
Let $\{A_n\}$ be a matrix-sequence, with $A_n$ of size $d_n$ tending to infinity, and let $S\subseteq\mathbb C$ be a closed subset of $\mathbb C$. We say that $\{A_n\}$ is strongly clustered at $S$ in the sense of the eigenvalues if, for every $\epsilon>0$, the number of eigenvalues of $A_n$ outside $D(S,\epsilon)$ is bounded by a constant $q_\epsilon$ independent of $n$. In other words
\begin{equation}\label{q-lambda}
q_\epsilon(n,S):=\#\{j\in\{1,\ldots,d_n\}:\lambda_j(A_n)\notin D(S,\epsilon)\}=O(1),\quad\mbox{as $n\to\infty$.}
\end{equation}
We say that $\{A_n\}$ is weakly clustered at $S$ in the sense of the eigenvalues if, for every $\epsilon>0$,
$$q_\epsilon(n,S)=o(n),\quad\mbox{as $n\to\infty$.}$$
If $\{A_n\}$ is strongly or weakly clustered at $S$ and $S$ is not connected, then its disjoint parts are called sub-clusters.

By replacing `eigenvalues' with `singular values' and $\lambda_j(A_n)$ with $\sigma_j(A_n)$ in \eqref{q-lambda}, we obtain the definitions of a matrix-sequence strongly or weakly clustered at a closed subset of $\mathbb C$, in the sense of the singular values. It is worth noting that, since the singular values are always non-negative, any matrix-sequence is strongly clustered in the sense of the singular values at a certain $S\subseteq[0,\infty)$. Similarly, any matrix-sequence formed by matrices with only real eigenvalues (e.g. by Hermitian matrices) is strongly clustered at some $S\subseteq\mathbb R$ in the sense of the eigenvalues.
\end{Definition}

\begin{Remark}\label{Golinskii}
If $\{A_n\}\sim_\lambda(f,G)$, with $\{A_n\},f,G$ as in Definition \ref{def-distribution}, then $\{A_n\}$ is weakly clustered at $\mathcal{ER}(f)$ in the sense of the eigenvalues. This result is proved in \cite[Theorem 4.2]{Golinskii-Serra}. It is clear that $\{A_n\}\sim_{\lambda}(f,G)$, with $f\equiv r$ equal to a constant function, is equivalent to saying that $\{A_n\}$ is weakly clustered at $r\in\mathbb C$ in the sense of the eigenvalues. The reader is referred to \cite[Section 4]{taud2} for several relationships which link the concepts of equal distribution, equal localization, spectral distribution, spectral clustering, etc.
\end{Remark}

\begin{Remark}\label{Toeplitz_distribution}
Since it was proved in \cite{Tillinota} that $\{T_n(f)\}_{n\in\mathbb N^k}\sim_\lambda(f,I_k)$ for $f\in L^1(k,s)$, every matrix-sequence $\{T_{n(m)}(f)\}_m$ such that $\min_jn_j(m)\to\infty$ is weakly clustered at $\mathcal{ER}(f)$ in the sense of the eigenvalues.
\end{Remark}

\begin{Definition}\label{area}
{\rm Let $K$ be a compact subset of ${\mathbb C}$.
We define
\begin{equation*}
Area(K):=\mathbb C\backslash U,
\end{equation*}
where $U$ is the (unique) unbounded connected component of ${\mathbb C}\backslash K$.}
\end{Definition}

Now we are ready for stating the main tool that we shall use for the proof of our main result (Theorem \ref{szego-nonherm-prec}).

\begin{Theorem}\label{mergelyan-cons-improved-2}
{\rm\cite{maya}} Let $\{A_n\}$ be a matrix-sequence, with $A_n$ of size $d_n$ tending to infinity. If:
\begin{itemize}
	\item[$\boldsymbol{(c_1)}$] the spectrum $\Lambda_n$ of $A_n$ is uniformly bounded, i.e., $|\lambda|<C$ for all $\lambda\in\Lambda_n$, for all $n$, and for some constant $C$ independent of $n$;
	\item[$\boldsymbol{(c_2)}$] there exists a measurable function $h\in L^\infty(k,s)$, defined over a certain domain $G\subset\mathbb R^k$ of finite and positive Lebesgue measure, such that, for every non-negative integer $N$, we have
	$$\lim_{n\rightarrow \infty}\frac{{\rm tr}(A_n^N)}{d_n}=\frac1{m_k(G)}\int_G\frac{{\rm tr}(h^N(t))}{s}\,dt;$$
	\item[$\boldsymbol{(c_3)}$] $\{P(A_n)\}\sim_\sigma (P(h),G)$ for every polynomial $P$;
\end{itemize}
then the matrix-sequence $\{A_n\}$ is weakly clustered at $Area({\cal ER}(h))$ and relation \eqref{distribution:sv-eig} is true for every $F\in\mathcal C_0(\mathbb C)$ which is holomorphic in the interior of $Area({\cal ER}(h))$.
If moreover:
\begin{itemize}
	\item[$\boldsymbol{(c_4)}$] $\mathbb C\backslash{\cal ER}(h)$ is connected and the interior of ${\cal ER}(h)$ is empty;
\end{itemize}
then $\{A_n\}\sim_\lambda(h,G)$.
\end{Theorem}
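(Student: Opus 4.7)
The proof splits into two parts: (A) establishing weak clustering of $\{A_n\}$ at $Area(\mathcal{ER}(h))$, and (B) upgrading the polynomial moment identities encoded in $(c_2)$ to the distribution formula \eqref{distribution:sv-eig} for the specified class of test functions $F$. The crucial analytic tool is Mergelyan's theorem: whenever $K\subset\mathbb C$ is compact with $\mathbb C\setminus K$ connected, polynomials are uniformly dense in the subspace of $C(K)$ consisting of functions holomorphic on the interior of $K$. Definition \ref{area} is tailored to this theorem, since $\mathbb C\setminus Area(\mathcal{ER}(h))$ is connected by construction.

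\textbf{Part (A): weak clustering.} I argue by contradiction. If clustering fails, then after extracting a subsequence and using $(c_1)$ (which traps every eigenvalue of $A_n$ in $\overline{D(0,C)}$), a standard covering/pigeonhole argument produces a point $z_0\notin Area(\mathcal{ER}(h))$, a radius $\rho>0$ with $B(z_0,\rho)\cap Area(\mathcal{ER}(h))=\emptyset$, and a constant $\delta'>0$ such that at least $\delta' d_n$ eigenvalues of $A_n$ lie in $B(z_0,\rho)$. Because $z_0$ belongs to the unbounded connected component of $\mathbb C\setminus\mathcal{ER}(h)$, polynomial convexity (equivalently, Runge/Oka--Weil) gives, for every prescribed $\eta>0$, a polynomial $P$ with $\sup_{\mathcal{ER}(h)}|P|\le\eta$ and $|P|\ge 2$ on $B(z_0,\rho)$ (after shrinking $\rho$ if needed). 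Consequently $P(A_n)$ has at least $\delta' d_n$ eigenvalues of modulus $\ge 2$. On the other hand, $(c_3)$ applied to $P$ gives $\{P(A_n)\}\sim_\sigma(P(h),G)$; by the singular-value analogue of Remark \ref{Golinskii}, $\{P(A_n)\}$ is weakly clustered in singular values at a subset of $[0,c\eta]$. Combining the Weyl majorisations $\sum_j|\lambda_j(P(A_n))|^2\le\sum_j\sigma_j(P(A_n))^2$ and $\prod_{j\le k}|\lambda_j|\le\prod_{j\le k}\sigma_j$ with these bounds, and choosing $\eta$ small enough, I extract an incompatibility between the lower bound on $\sum_j\sigma_j(P(A_n))^2$ forced by the many large eigenvalues of $P(A_n)$ and the structural smallness imposed by the singular-value cluster, producing the required contradiction.

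\textbf{Part (B): distribution formula.} Condition $(c_2)$ says precisely that the empirical spectral measures $\nu_n:=(1/d_n)\sum_j\delta_{\lambda_j(A_n)}$ satisfy $\int P\,d\nu_n\to\int P\,d\mu$ for every polynomial $P$, where $\mu$ is the probability measure $\int F\,d\mu:=(1/(s\,m_k(G)))\int_G\sum_i F(\lambda_i(h(t)))\,dt$, supported on $\mathcal{ER}(h)\subseteq Area(\mathcal{ER}(h))$. Combining Part (A), which yields that $\nu_n$ concentrates (up to mass $o(1)$) on every fixed neighbourhood of $Area(\mathcal{ER}(h))$, with uniform Mergelyan approximation of $F$ by polynomials on $Area(\mathcal{ER}(h))$, I transfer polynomial convergence to convergence against any $F\in C_0(\mathbb C)$ continuous on $Area(\mathcal{ER}(h))$ and holomorphic in its interior; the mass $\nu_n$ places outside a neighbourhood of $Area(\mathcal{ER}(h))$ contributes at most $\|F\|_\infty\cdot o(1)$. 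Under $(c_4)$, $Area(\mathcal{ER}(h))=\mathcal{ER}(h)$ (connected complement kills bounded components, empty interior kills the holomorphy constraint), and Mergelyan gives density of polynomials in all of $C(\mathcal{ER}(h))$; thus the distribution formula holds for every $F\in C_0(\mathbb C)$, i.e.\ $\{A_n\}\sim_\lambda(h,G)$.

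\textbf{Main obstacle.} The delicate step is the Weyl-type comparison in Part (A). Since $A_n$ need not be normal and $(c_3)$ controls singular values only through compactly supported test functions in $\mathcal C_0(\mathbb R_0^+)$, no a priori upper bound on $\sigma_{\max}(P(A_n))$ is available, so translating ``many large-modulus eigenvalues'' of $P(A_n)$ into ``many large singular values'' requires a careful quantitative balance between the smallness parameter $\eta$, the radius $\rho$, the eigenvalue density $\delta'$, and the $o(d_n)$ tolerance in the singular-value clustering, together with a judicious choice of $P$ (possibly of a high power $P^k$ to amplify the gap). This balancing is the technical heart of the argument.
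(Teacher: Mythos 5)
This theorem is not proved in the paper at all: it is imported verbatim from \cite{maya} as the main tool for Theorem \ref{szego-nonherm-prec}, so your attempt can only be measured against the argument of that reference, whose overall architecture (separation of points outside $Area(\mathcal{ER}(h))$ by polynomials, the moment condition $\boldsymbol{(c_2)}$, singular-value control of $P(A_n)$ via $\boldsymbol{(c_3)}$, Mergelyan's theorem, and the observation that $\boldsymbol{(c_4)}$ forces $Area(\mathcal{ER}(h))=\mathcal{ER}(h)$ with empty interior) your outline does follow; Part (B) of your plan is indeed a correct reduction once weak clustering is available. The genuine gap is that Part (A), which you yourself identify as the technical heart, is asserted rather than proved, and the route you sketch breaks at a specific point: you pick $P$ with $\sup_{\mathcal{ER}(h)}|P|\le\eta$ and claim that $\boldsymbol{(c_3)}$ plus the singular-value analogue of Remark \ref{Golinskii} clusters the singular values of $P(A_n)$ in $[0,c\eta]$. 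That is false as stated: $\sim_\sigma$ clusters the singular values of $P(A_n)$ at the essential range of the \emph{singular values} of $P(h)$, and since $h(t)$ need not be normal these are governed by $\|P(h(t))\|$, not by the modulus of $P$ on $\mathcal{ER}(h)$ (spectral radius versus norm), so no bound $c\eta$ with $c$ independent of $P$ is available. The repair is precisely the high-power device you mention only in passing: fix a polynomial $Q$ and a disk $B(z_0,\rho)$ \emph{determined by $Q$} with $\inf_{B(z_0,\rho)}|Q|\ge1$ and $\sup_{\mathcal{ER}(h)}|Q|\le q<1$ (possible because $z_0$ lies outside the polynomially convex set $Area(\mathcal{ER}(h))$), set $P=Q^m$, and use a bound of the type $\|Q(h(t))^m\|\le C\,m^{s-1}q^{\,m-s+1}$ (Schur form; here $h\in L^\infty(k,s)$ and the a.e.\ bound $|Q(\lambda_i(h(t)))|\le q$ enter) to make $\|P(h)\|_{L^\infty}$ small. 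Note also the quantifier order: in your pigeonhole the disk is produced before $\eta$, and ``shrinking $\rho$ if needed'' afterwards invalidates the count; one must cover $\overline{D(0,C)}\setminus D(Area(\mathcal{ER}(h)),\epsilon)$ by finitely many of the $Q$-determined disks from the outset.

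The second missing ingredient is the inequality that actually converts ``many eigenvalues of $P(A_n)$ of modulus $\ge1$'' into a contradiction. The plain Weyl majorizations you quote do not suffice: $\boldsymbol{(c_3)}$ controls singular values only through test functions in $\mathcal C_0(\mathbb R^+_0)$, and the $o(d_n)$ exceptional singular values of $P(A_n)$ may be arbitrarily large, so $\sum_j\sigma_j^2(P(A_n))$ (equivalently $\|P(A_n)\|_1$ or $\|P(A_n)\|_2^2$) is not controlled and $\sum_j|\lambda_j|^2\le\sum_j\sigma_j^2$ gives nothing. What closes the argument is the truncated majorization $\sum_j\min\{|\lambda_j(B)|^2,1\}\le\sum_j\min\{\sigma_j^2(B),1\}$, legitimate because $x\mapsto\min\{{\rm e}^{2x},1\}$ is convex and nondecreasing: the cap neutralizes the few huge singular values (their number beyond any fixed $R>\|P(h)\|_{L^\infty}$ is $o(d_n)$ by the singular-value clustering), a compactly supported test function handles the rest, and one gets $\limsup_n d_n^{-1}\#\{j:\lambda_j(A_n)\in B(z_0,\rho)\}\le\big(C\,m^{s-1}q^{\,m-s+1}\big)^2\to0$ as $m\to\infty$, i.e.\ the $o(d_n)$ outlier count. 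Without this quantitative step the ``incompatibility'' in your Part (A) is not actually extracted, so as written the proposal does not yet constitute a proof.
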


Using Theorem \ref{mergelyan-cons-improved-2}, in \cite{maya} the authors proved the following result.

\begin{Theorem}\label{szego-nonherm}
{\rm \cite{maya}} Let $f\in L^\infty(k,s)$. If ${\cal ER}(f)$ has empty interior and does not disconnect the complex plane, then $\{T_n(f)\}_{n\in\mathbb N^k}\sim_\lambda(f,I_k).$
\end{Theorem}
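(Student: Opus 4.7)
The strategy is to apply Theorem \ref{mergelyan-cons-improved-2} with $A_n:=T_n(f)$, $d_n:=s\widehat n$, $h:=f$, and $G:=I_k$. Condition $(c_4)$ is exactly the topological hypothesis assumed in Theorem \ref{szego-nonherm}, so only $(c_1)$--$(c_3)$ need verification.

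Condition $(c_1)$ is standard: for $f\in L^\infty(k,s)$ one has $\|T_n(f)\|\le\mathrm{ess\,sup}_{x\in I_k}\|f(x)\|$, obtained by realising $T_n(f)$ as a compression of the multiplication operator by $f$ on $L^2(I_k;\mathbb C^s)$ with respect to a truncated Fourier basis. In particular, every eigenvalue of $T_n(f)$ lies in the fixed disk $D(0,\|\,\|f\|\,\|_{L^\infty})$ independently of $n$.

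Condition $(c_2)$ is a Szeg\H{o}-type trace formula. Expanding via \eqref{mbTm_multi-index} gives
$\mathrm{tr}(T_n(f)^N)=\sum_{i_1,\ldots,i_N=e}^{n}\mathrm{tr}\bigl(\widehat f_{i_1-i_2}\widehat f_{i_2-i_3}\cdots\widehat f_{i_N-i_1}\bigr)$. Re-parameterising the sum by the $N-1$ independent differences $j_\ell:=i_\ell-i_{\ell+1}\in\mathbb Z^k$ and counting, for each fixed $(j_1,\ldots,j_{N-1})$, the number of admissible multi-index tuples $(i_1,\ldots,i_N)$ inside $\{e,\ldots,n\}$, one obtains a weighted sum whose leading part is $\widehat n\sum_{j_1+\cdots+j_N=0}\mathrm{tr}(\widehat f_{j_1}\cdots\widehat f_{j_N})$; the remainder, coming from $N$-tuples whose partial sums exit the cube, has absolute value $o(\widehat n)$ because $f\in L^\infty\subset L^2$ and the ``boundary slab'' has relative $k$-measure $O(\sum_l n_l^{-1})$. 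The multivariate Parseval identity for matrix-valued $L^2$ functions converts the leading sum into $\frac{\widehat n}{(2\pi)^k}\int_{I_k}\mathrm{tr}(f(t)^N)\,dt$; dividing by $s\widehat n$ and recalling $m_k(I_k)=(2\pi)^k$ yields $(c_2)$.

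Condition $(c_3)$ is where the main difficulty lies. The multilevel block Avram--Parter/Tilli theorem \cite{Tillinota} gives $\{T_n(g)\}_{n\in\mathbb N^k}\sim_\sigma(g,I_k)$ for every $g\in L^1(k,s)$; applied with $g:=P(f)\in L^\infty(k,s)$, this produces $\{T_n(P(f))\}_{n\in\mathbb N^k}\sim_\sigma(P(f),I_k)$. It therefore suffices to show that $P(T_n(f))$ and $T_n(P(f))$ share the same singular-value distribution, which I plan to deduce from the fact that $R_n:=P(T_n(f))-T_n(P(f))$ is a ``small-norm plus low-rank'' perturbation, meaning $\|R_n\|=O(1)$ and $\mathrm{rank}(R_n)=o(s\widehat n)$; such perturbations do not affect singular-value distributions, by a Weyl-inequality argument. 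To establish the bound on $R_n$, I would iterate the identity $T_n(f)T_n(g)=T_n(fg)+E_n(f,g)$, where the correction $E_n(f,g)$ is supported near the facets of the $k$-dimensional index cube $\{e,\ldots,n\}$ and, exactly as in the scalar/unilevel Widom--Hankel analysis, has uniformly bounded norm and rank of order $\widehat n\sum_l n_l^{-1}=o(\widehat n)$; telescoping this identity across a monomial $P(\lambda)=\lambda^N$ and combining linearly for general $P$ provides the required estimate on $R_n$. The hard part is precisely this multilevel rank count, since in the $k$-level setting the ``boundary'' of the cube is $(k-1)$-dimensional and the rank bookkeeping must track contributions from all $k$ facet directions simultaneously. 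Once $(c_1)$--$(c_4)$ are secured, the second clause of Theorem \ref{mergelyan-cons-improved-2} delivers $\{T_n(f)\}_{n\in\mathbb N^k}\sim_\lambda(f,I_k)$.
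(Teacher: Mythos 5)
Your overall skeleton is the right one and coincides with the route followed in \cite{maya} (and mirrored in this paper's proof of Theorem \ref{szego-nonherm-prec}): apply Theorem \ref{mergelyan-cons-improved-2} with $h=f$, $G=I_k$, get $\boldsymbol{(c_1)}$ from $\|T_n(f)\|\le\|f\|_{L^\infty}$ as in \eqref{Linfty}, and observe that $\boldsymbol{(c_4)}$ is the hypothesis. The genuine gap is in your verification of $\boldsymbol{(c_3)}$. You claim that the correction $E_n(f,g)=T_n(f)T_n(g)-T_n(fg)$ has uniformly bounded spectral norm \emph{and} rank $o(\widehat n)$ for $f,g\in L^\infty(k,s)$, and hence that $R_n=P(T_n(f))-T_n(P(f))$ is a ``small-norm plus low-rank'' perturbation. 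The low-rank estimate is only true when one of the two symbols is a trigonometric polynomial (this is exactly Proposition \ref{PropRank}); for general $L^\infty$ symbols the correction is generically of full rank (in the unilevel scalar case it is a product of Hankel matrices whose symbols are merely bounded, hence not compact, so no $o(n)$ rank or even $o(n)$ count of non-negligible singular values follows), and bounded spectral norm alone only yields $\|E_n\|_1=O(\widehat n)$, which is useless here. The correct statement, and the one the paper proves (Proposition \ref{TgTh-Tgh}), is the trace-norm estimate $\|T_n(f)T_n(g)-T_n(fg)\|_1=o(\widehat n)$, obtained by Ces\`aro approximation of one factor: the polynomial part contributes a genuinely low-rank term, while the approximation errors are small in trace norm but neither low-rank nor small in spectral norm. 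Your Weyl-type argument must therefore be replaced by the standard perturbation principle that trace-norm $o(d_n)$ corrections (equivalently, splittings into rank $\le\epsilon d_n$ plus spectral norm $\le\epsilon$) do not alter singular-value distributions; with that substitute, $\boldsymbol{(c_3)}$ does follow from $\{T_n(P(f))\}\sim_\sigma(P(f),I_k)$ plus the iterated trace-norm estimate, or in one line from the GLT algebra as the paper does.

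A secondary weak point is $\boldsymbol{(c_2)}$: your direct Fourier bookkeeping needs absolute convergence of $\sum_{j_1+\cdots+j_N=0}|{\rm tr}(\widehat f_{j_1}\cdots\widehat f_{j_N})|$ and an honest $o(\widehat n)$ bound for the boundary terms, but $f\in L^\infty$ only gives square-summable Fourier coefficients, which does not guarantee absolute summability of these $N$-fold convolutions for $N\ge3$; the ``boundary slab'' estimate is asserted rather than proved. The clean route, again the one used in the paper for the preconditioned case, is to prove $\|T_n(f)^N-T_n(f^N)\|_1=o(\widehat n)$ by iterating Proposition \ref{TgTh-Tgh} together with the H\"older inequality \eqref{p=1,q=infinito}, and then to use $|{\rm tr}(A)|\le\|A\|_1$ and the exact identity ${\rm tr}(T_n(f^N))=\widehat n\,{\rm tr}(\widehat{(f^N)}_0)=\frac{\widehat n}{(2\pi)^k}\int_{I_k}{\rm tr}(f^N(t))\,dt$. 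Once $\boldsymbol{(c_2)}$ and $\boldsymbol{(c_3)}$ are repaired along these lines, the rest of your argument goes through and yields $\{T_n(f)\}_{n\in\mathbb N^k}\sim_\lambda(f,I_k)$.
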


Theorem \ref{szego-nonherm} generalizes to the matrix-valued case a result by Tilli \cite{tillicomplex}, holding in the scalar-valued case $s=1$.
Here, we generalize Theorem \ref{szego-nonherm}, which concerns the non-preconditioned matrix-family $\{T_n(f)\}_{n\in\mathbb N^k}$, to the case of preconditioned matrix-families of the form $\{T_n^{-1}(g)T_n(f)\}_{n\in\mathbb N^k}$. Note that, for a function $g\in L^\infty(k,s)$, the essential numerical range $\mathcal{ENR}(g)$ is compact and hence ${\rm Coh}[{\cal ENR}(g)]$ is also compact (we recall that the convex hull of a compact set is compact). Therefore, if $g\in L^\infty(k,s)$ and $0\notin{\rm Coh}[{\cal ENR}(g)]$, then $g$ is sectorial (Lemma \ref{lemma_geometrico}) and $T_n(g)$ is invertible for all $n\in\mathbb N^k$ (Theorem \ref{locsv4}). The condition $0\notin{\rm Coh}[\mathcal{ENR}(g)]$ also ensures that $g$ is invertible a.e., because, for almost every $x\in I_k$, $\lambda_i(g(x))\in\mathcal{ER}(g)\subseteq\mathcal{ENR}(g)\subseteq{\rm Coh}[\mathcal{ENR}(g)]$ for all $i=1,\ldots,s$, implying that $\lambda_i(g)\ne0$ for all $i=1,\ldots,s,$ a.e.

\begin{Theorem}\label{szego-nonherm-prec}
Let $f,g\in L^\infty(k,s)$, with $0\notin{\rm Coh}[{\cal ENR}(g)]$, and let $h:=g^{-1}f$.
If ${\cal ER}(h)$ has empty interior and does not disconnect the complex plane, then $\{T_n^{-1}(g)T_n(f)\}_{n\in\mathbb N^k}\sim_\lambda(h,I_k)$.
\end{Theorem}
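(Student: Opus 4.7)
The plan is to apply Theorem \ref{mergelyan-cons-improved-2} to the matrix-family $A_n := T_n^{-1}(g)T_n(f)$ of order $d_n = s\widehat n$, with candidate symbol $h = g^{-1}f$ and domain $G = I_k$. Condition $\boldsymbol{(c_4)}$ is precisely the topological hypothesis of the theorem. For $\boldsymbol{(c_1)}$, Lemma \ref{lemma_geometrico} tells us that $g$ is sectorial with $d := d({\rm Coh}[\mathcal{ENR}(g)],0) > 0$; Theorem \ref{locsv4} then gives $\sigma_{\min}(T_n(g)) > d$ uniformly in $n$, whence $\|T_n^{-1}(g)\| < 1/d$. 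Combined with the standard $L^\infty$-to-operator bound $\|T_n(f)\| \leq C\|f\|_\infty$, this bounds $\|A_n\|$ (and therefore the spectrum of $A_n$) uniformly in $n$.

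For $\boldsymbol{(c_3)}$, I would invoke the theory of Generalized Locally Toeplitz matrix-sequences (see \cite{glt-vs-fourier} and references therein): $\{T_n(f)\}$ and $\{T_n(g)\}$ are GLT with symbols $f$ and $g$, and because $g$ is a.e. invertible (as noted in the paragraph preceding the theorem) with $\|T_n^{-1}(g)\|$ uniformly bounded, the family $\{T_n^{-1}(g)\}$ is GLT with symbol $g^{-1}$. The algebraic closure properties of the GLT class then imply that $\{A_n\}$ is GLT with symbol $h$, and $\{P(A_n)\}$ is GLT with symbol $P(h)$ for every polynomial $P$; consequently $\{P(A_n)\} \sim_\sigma (P(h), I_k)$, verifying $\boldsymbol{(c_3)}$.

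The main obstacle is $\boldsymbol{(c_2)}$, since the non-Hermitian GLT machinery does not automatically supply eigenvalue/trace information. The plan is to set $R_n := A_n - T_n(h) = T_n^{-1}(g)\bigl(T_n(gh) - T_n(g)T_n(h)\bigr)$ (using $f = gh$ a.e.) and invoke the classical Szeg\H{o} product estimate $\|T_n(g)T_n(h) - T_n(gh)\|_2^2 = o(\widehat n)$, valid for $g, h \in L^\infty(k,s)$. Together with $\|T_n^{-1}(g)\| \leq 1/d$, this gives $\|R_n\|_2 = o(\sqrt{\widehat n})$ and hence, via Cauchy--Schwarz on singular values, $\|R_n\|_1 \leq \sqrt{s\widehat n}\,\|R_n\|_2 = o(\widehat n)$ in trace-class norm. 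Expanding ${\rm tr}(A_n^N) = {\rm tr}((T_n(h) + R_n)^N)$ and applying H\"older in Schatten norms to each of the $2^N - 1$ terms containing at least one factor $R_n$---using $\|R_n\|_1$ for that factor and the uniform operator-norm bounds on $T_n(h)$ and $R_n$ for the remaining factors---gives ${\rm tr}(A_n^N) = {\rm tr}(T_n(h)^N) + o(\widehat n)$. A telescoping iteration of the same Szeg\H{o} estimate yields ${\rm tr}(T_n(h)^N) = {\rm tr}(T_n(h^N)) + o(\widehat n)$, and the elementary identity ${\rm tr}(T_n(h^N)) = \frac{\widehat n}{(2\pi)^k}\int_{I_k}{\rm tr}(h^N(x))\,dx$, read off from the zeroth Fourier coefficient in \eqref{mbTm}, closes $\boldsymbol{(c_2)}$. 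Theorem \ref{mergelyan-cons-improved-2} then concludes $\{A_n\}_{n\in\mathbb N^k} \sim_\lambda (h, I_k)$.
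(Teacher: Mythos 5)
Your proposal follows the same overall strategy as the paper: invoke Theorem~\ref{mergelyan-cons-improved-2} with $h=g^{-1}f$ and $G=I_k$, verify $(c_1)$ via $\|T_n^{-1}(g)\|\le 1/d$ and $\|T_n(f)\|\le\|f\|_{L^\infty}$, verify $(c_3)$ via the GLT algebra, take $(c_4)$ from the hypotheses, and reduce $(c_2)$ to a trace-norm estimate on $R_n := T_n^{-1}(g)T_n(f)-T_n(h)$ followed by a binomial-type expansion of $(T_n(h)+R_n)^N$ with H\"older in Schatten norms. The one genuine difference is in how the bound $\|R_n\|_1=o(\widehat n)$ is obtained. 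The paper proves a dedicated trace-norm product estimate, Proposition~\ref{TgTh-Tgh}, showing $\|T_n(g)T_n(h)-T_n(gh)\|_1=o(\widehat n)$ for $L^\infty(k,s)$ symbols; this is established from scratch using Ces\`aro/Korovkin polynomial approximation in $L^1$ together with the rank bound of Proposition~\ref{PropRank} and the inequality $\|A\|_1\le {\rm rank}(A)\,\|A\|$. You instead invoke a Frobenius-norm (Schatten~$2$) Szeg\H{o} product estimate $\|T_n(g)T_n(h)-T_n(gh)\|_2^2=o(\widehat n)$ as ``classical'' and convert to the trace norm by Cauchy--Schwarz, $\|R_n\|_1\le\sqrt{s\widehat n}\,\|R_n\|_2$. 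The conversion is correct, but in the multilevel matrix-valued setting that Frobenius estimate is not obviously off-the-shelf: the paper's authors evidently did not regard even the trace-norm version as citable in this generality (they point only to partial scalar proofs in \cite{LaaSSS}), and they supplied a full proof. So your argument would be complete once you either prove that Frobenius estimate for general $k\ge1$, $s\ge1$ or replace it with Proposition~\ref{TgTh-Tgh}-style reasoning. A small bonus of your write-up is that you make explicit, via telescoping and the zeroth Fourier coefficient identity ${\rm tr}(T_n(h^N))=\frac{\widehat n}{(2\pi)^k}\int_{I_k}{\rm tr}(h^N)\,dx$, why $\{T_n(h)\}$ itself satisfies the trace-moment condition of $(c_2)$; the paper leaves this step compressed into the citation of Theorem~\ref{szego-nonherm} and the inequality $|{\rm tr}(A)|\le\|A\|_1$.
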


Before beginning with the proof of Theorem \ref{szego-nonherm-prec}, some preliminary work is needed. Given a square matrix $A$ of size $m$, we denote by $\|A\|_{(p)}$ the $p$-norm of $A$, that is the $p$-norm of the vector of length $m^2$ obtained by putting all the columns of $A$ one below the other. The notation $\|A\|_p$ is reserved for the Schatten $p$-norm of $A$, defined as the $p$-norm of the vector formed by the singular values of $A$. In symbols, $\|A\|_p=(\sum_{j=1}^m\sigma_j^p(A))^{1/p}$ for $1\le p<\infty$ and $\|A\|_\infty=\max_{j=1,\ldots,m}\sigma_j(A)=\|A\|$. The Schatten 1-norm is also called the trace-norm. We refer the reader to \cite{Bhatia} for the properties of the Schatten $p$-norms. We only recall from \cite[Problem III.6.2 and Corollary IV.2.6]{Bhatia} the H\"older inequality $\|AB\|_1\leq\|A\|_p\|B\|_q$, which is true for all square matrices $A,B$ of the same size and whenever $p,q\in[1,\infty]$ are conjugate exponents (i.e. $\frac1p+\frac1q=1$). In particular, we will need the H\"older inequality with $p=1$ and $q=\infty$, which involves the spectral norm and the trace-norm:
\begin{equation}\label{p=1,q=infinito}
\|AB\|_1\le\|A\|\|B\|_1.
\end{equation}

Now, let us show that, for any $1\le p\le\infty$, $L^p(k,s)=L^p(I_k,dx,\mathcal M_s)$, where
\begin{align*}
L^p(I_k,dx,\mathcal M_s)&:=\left\{f:I_k\to\mathcal M_s\left|\,\mbox{$f$ is measurable, } \int_{I_k}\|f(x)\|_p^pdx<\infty\right.\right\},&\mbox{if $1\le p<\infty$,}\\
L^\infty(I_k,dx,\mathcal M_s)&:=\left\{f:I_k\to\mathcal M_s\left|\,\mbox{$f$ is measurable, }\mathop{\rm ess\,sup}_{x\in I_k}\|f(x)\|_\infty<\infty\right.\right\}.
\end{align*}
Since $\mathcal M_s$ is a finite-dimensional vector space, all the norms on $\mathcal M_s$ are equivalent. In particular, $\|\cdot\|_{(p)}$ and $\|\cdot\|_p$ are equivalent, and so there are two positive constants $\alpha,\beta$ such that
$$\alpha\|f(x)\|_p\le\|f(x)\|_{(p)}\le\beta\|f(x)\|_p,\quad\forall x\in I_k.$$
It follows that
\begin{align}
&\alpha^p\int_{I_k}\|f(x)\|_p^pdx\le\int_{I_k}\|f(x)\|_{(p)}^pdx\le\beta^p\int_{I_k}\|f(x)\|_p^pdx, & \mbox{if $1\le p<\infty$,}\label{Lplp}\\
&\alpha\mathop{\rm ess\,sup}_{x\in I_k}\|f(x)\|_\infty\le\mathop{\rm ess\,sup}_{x\in I_k}\|f(x)\|_{(\infty)}\le\beta\mathop{\rm ess\,sup}_{x\in I_k}\|f(x)\|_\infty.\label{oo}
\end{align}
Therefore, if $f\in L^p(k,s)$ then each component $f_{ij}:I_k\to\mathbb C,\ i,j=1,\ldots,s,$ belongs to $L^p(I_k)$ and the first inequalities in \eqref{Lplp}--\eqref{oo} says that $f\in L^p(I_k,dx,\mathcal M_s)$. Conversely, if $f\in L^p(I_k,dx,\mathcal M_s)$, the second inequalities in \eqref{Lplp}--\eqref{oo} says that $f\in L^p(k,s)$. This concludes the proof of the identity $L^p(k,s)=L^p(I_k,dx,\mathcal M_s)$ and allows us to define the following functional norm on $L^p(k,s)$:
$$\|f\|_{L^p}:=\left\|\,\|f(x)\|_p\,\right\|_{L^p(I_k)}=\left\{\begin{array}{ll}
\left(\int_{I_k}\|f(x)\|_p^pdx\right)^{1/p}, & \mbox{if $1\le p<\infty$,}\\
\mathop{\rm ess\,sup}_{x\in I_k}\|f(x)\|_\infty, & \mbox{if $p=\infty$.}
\end{array}\right.$$
If $p,q\in[1,\infty]$ are conjugate exponents and $f\in L^p(k,s),\ g\in L^q(k,s)$, then a computation involving the H\"older inequalities for both Schatten $p$-norms and $L^p(I_k)$-norms shows that $fg\in L^1(k,s)$ and, in fact, $\|fg\|_{L^1},\|gf\|_{L^1}\leq\|f\|_{L^p}\|g\|_{L^q}.$ In particular, we will need the inequality with $p=1$ and $q=\infty$, i.e.
\begin{equation}\label{Holder}
\|fg\|_{L^1},\|gf\|_{L^1}\le\|f\|_{L^1}\|g\|_{L^\infty}.
\end{equation}
We also recall some known facts concerning the spectral norm and the Schatten 1-norm of Toeplitz matrices, see \cite[Corollary 3.5]{ssMore}:
\begin{align}
\label{L1}f\in L^1(k,s)\quad\Rightarrow\quad\|T_{n}(f)\|_{1}&\leq\frac{\widehat{n}}{(2\pi)^{k}}\|f\|_{L^{1}},\quad\forall n\in\mathbb N^k;\\
\label{Linfty}f\in L^\infty(k,s)\quad\Rightarrow\quad\|T_{n}(f)\|&\leq\|f\|_{L^{\infty}},\quad\forall n\in\mathbb N^k.
\end{align}

In order to prove Theorem \ref{szego-nonherm-prec}, we still need two results. The first (Proposition \ref{PropRank}) provide an estimate of the rank of $T_n(g)T_n(f)-T_n(gf)$, in the case where $f\in L^1(k,s)$ and $g$ is a $k$-variate trigonometric polynomial of degree $r=(r_1,\ldots,r_k)$ taking values in $\mathcal M_s$ (see Definition \ref{trig_poly}). The second result (Proposition \ref{TgTh-Tgh}) concerns the evaluation of the trace-norm of $T_n(g)T_n(f)-T_n(gf)$ for $f,g\in L^\infty(k,s)$, which is a crucial point for the proof of Theorem \ref{szego-nonherm-prec}. For $s=1$, we can find the proofs of these results (full for $k=1$ and sketched for $k>1$) in \cite{LaaSSS}. For completeness, we report the full proofs for $k>1$, also considering the generalization to $s>1$.

We recall that $g:\mathbb C^k\to\mathbb C$ is a $k$-variate trigonometric polynomial if $g$ is a finite linear combination of the $k$-variate functions (Fourier frequences) $\{{\rm e}^{{\mathbf i}\langle j,x\rangle}:j\in\mathbb Z^k\}$. Therefore, if $g$ is a $k$-variate trigonometric polynomial, then $g$ has only a finite number of nonzero Fourier coefficients $\widehat g_j$ and we define the degree $r=(r_1,\ldots,r_k)$ of $g$ as follows: for each $i=1,\ldots,k,$ $r_i$ is the maximum of $|j_i|,$ where $j=(j_1,\ldots,j_k)$ varies among all multi-indices in $\mathbb Z^k$ such that $\widehat g_j\ne0$ ($r_i$ is called the degree of $g(x)$ with respect to the $i$-th variable $x_i$). Observe that a $k$-variate trigonometric polynomial $g$ of degree $r=(r_1,\ldots,r_k)$ can be written in the form $g(x)=\sum_{j=-r}^r\widehat g_j{\rm e}^{\mathbf i\langle j,x\rangle}.$

\begin{Definition}\label{trig_poly}
We say that $g:\mathbb C^k\rightarrow{\cal M}_s$ is a $k$-variate trigonometric polynomial if, equivalently:
\begin{itemize}
	\item all the components $g_{l,t}:\mathbb C^k\to\mathbb C,\ l,t=1,\ldots,s,$ are $k$-variate trigonometric polynomials.
	\item $g$ is a finite linear combination (with coefficients in $\mathcal M_s$) of the $k$-variate functions $\{{\rm e}^{{\mathbf i}\langle j,x\rangle}:j\in\mathbb Z^k\}$.
\end{itemize}
If $g$ is a $k$-variate trigonometric polynomial, then $g$ has only a finite number of nonzero Fourier coefficients $\widehat g_j\in\mathcal M_s$ and the degree $r=(r_1,\ldots,r_k)$ of $g$ is defined in two equivalent ways:
\begin{itemize}
	\item for each $i=1,\ldots,k,$ $r_i$ is the maximum degree among all the polynomials $g_{l,t}(x)$ with respect to the $i$-th variable $x_i$;
	\item for each $i=1,\ldots,k,$ $r_i$ is the maximum of $|j_i|,$ where $j=(j_1,\ldots,j_k)$ varies among all multi-indices in $\mathbb Z^k$ such that $\widehat g_j$ is nonzero.
\end{itemize}
We note that a $k$-variate trigonometric polynomial $g$ of degree $r=(r_1,\ldots,r_k)$ can be written in the form $g(x)=\sum_{j=-r}^r\widehat g_j{\rm e}^{\mathbf i\langle j,x\rangle},$ where the Fourier coefficients $\widehat g_j$ belong to $\mathcal M_s$.
\end{Definition}

\begin{Proposition}\label{PropRank}
Let $f,g\in L^1(k,s)$, with $g$ a $k$-variate trigonometric polynomial of degree $r=(r_1,\ldots,r_k)$, and let $n$ be a $k$-index such that $n\ge 2r+e$. Then
\begin{equation}\label{rank}
{\rm rank}(T_n(g)T_n(f)-T_n(gf))\le s\left[\widehat n-\prod_{i=1}^k(n_i-2r_i)\right].
\end{equation}
\end{Proposition}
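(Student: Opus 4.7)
The plan is to exhibit a large block of consecutive block rows of the matrix $T_n(g)T_n(f)-T_n(gf)$ that vanish identically, and then bound the rank by the number of remaining (possibly nonzero) block rows times the block size $s$.

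First I would write the $(i,j)$ block of the product using the multi-index representation \eqref{mbTm_multi-index}:
$$
\bigl(T_n(g)T_n(f)\bigr)_{ij}=\sum_{l=e}^n\widehat g_{i-l}\widehat f_{l-j},\qquad i,j=e,\ldots,n.
$$
Since $g$ is a trigonometric polynomial of degree $r$, the Fourier coefficient $\widehat g_{i-l}$ vanishes unless $-r\le i-l\le r$, i.e.\ unless $i-r\le l\le i+r$. Hence
$$
\bigl(T_n(g)T_n(f)\bigr)_{ij}=\sum_{l=\max(e,\,i-r)}^{\min(n,\,i+r)}\widehat g_{i-l}\widehat f_{l-j}.
$$
On the other hand, the standard convolution formula for Fourier coefficients (which in our setting reduces to a finite sum because $g$ is a polynomial) gives
$$
\widehat{gf}_{p}=\sum_{m=p-r}^{p+r}\widehat g_{p-m}\widehat f_m,\qquad p\in\mathbb Z^k,
$$
and with $p=i-j$ and the change of summation index $m\mapsto m-j$, one obtains
$$
\bigl(T_n(gf)\bigr)_{ij}=\widehat{gf}_{i-j}=\sum_{m=i-r}^{i+r}\widehat g_{i-m}\widehat f_{m-j}.
$$

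The key observation is then: if the multi-index $i$ lies in the \emph{central range} $r+e\le i\le n-r$ (which is nonempty precisely because $n\ge 2r+e$), then $[i-r,i+r]\subseteq[e,n]$, the truncation in the product disappears, and the two sums above coincide for every $j=e,\ldots,n$. Consequently the entire $i$-th block row of $T_n(g)T_n(f)-T_n(gf)$ is zero.

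Finally, I would count: the number of multi-indices $i$ in the central range $r+e\le i\le n-r$ is exactly $\prod_{i=1}^k(n_i-2r_i)$, so at most $\widehat n-\prod_{i=1}^k(n_i-2r_i)$ block rows of the difference can be nonzero. Each such block row contributes at most $s$ to the rank, which yields the desired bound \eqref{rank}. There is no serious obstacle here; the only care needed is the bookkeeping of multi-indices to ensure the support condition on $\widehat g$ eliminates the boundary effect exactly in the central range, and the hypothesis $n\ge 2r+e$ is used precisely to guarantee that this central range is nonempty.
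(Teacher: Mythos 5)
Your proof is correct and follows essentially the same route as the paper's: both identify the central multi-index range $r+e\le i\le n-r$ where the truncation in the Toeplitz product disappears (using the support of $\widehat g$), conclude that the corresponding block rows of the difference vanish, and then bound the rank by $s$ times the number of remaining block rows.
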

\proof
Since $g:\mathbb C^k\to\mathcal M_s$ is a $k$-variate trigonometric polynomial of degree $r$, we can write $g$ in the form
\begin{equation*}
g(x)=\sum_{j=-r}^{r}\widehat g_j{\rm e}^{\mathbf i\left\langle j,x\right\rangle}.
\end{equation*}
The Fourier coefficients of $(gf)(x)=g(x)f(x)$ are given by
$$(\widehat{gf})_\ell=\frac1{(2\pi)^k}\int_{I_k}g(x)f(x){\rm e}^{-\mathbf i\langle \ell,x\rangle}dx=\sum_{j=-r}^r\widehat g_j\frac1{(2\pi)^k}\int_{I_k}f(x){\rm e}^{-\mathbf i\langle\ell-j,x\rangle}dx=\sum_{j=-r}^r\widehat g_j\widehat f_{\ell-j}.$$
Now, using the definition of multilevel block Toeplitz matrices, see \eqref{mbTm_multi-index}, for all $l,t=e,\ldots,n$ we have
\begin{equation}\label{toeplitzprodotto}
T_n(gf)_{l,t}=(\widehat{gf})_{l-t}=\sum_{j=-r}^r\widehat g_j\widehat f_{l-t-j},
\end{equation}
and
\begin{equation}\label{prodottotoeplitz}
(T_n(g)T_n(f))_{l,t}=\sum_{v=e}^nT_n(g)_{l,v}T_n(f)_{v,t}=\sum_{v=e}^n\widehat g_{l-v}\widehat f_{v-t}=\sum_{j=l-n}^{l-e}\widehat g_j\widehat f_{l-j-t}=\sum_{j=\max(l-n,-r)}^{\min(l-e,r)}\widehat g_j\widehat f_{l-t-j},
\end{equation}
where the last equality is motivated by the fact that $\widehat g_j$ is zero if $j<-r$ or $j>r$. Therefore, (\ref{toeplitzprodotto}) and (\ref{prodottotoeplitz}) coincide when $r+e\leq l \leq n-r$. Observe that the multi-index range $r+e,\ldots, n-r$ is nonempty because of the assumption $n\ge 2r+e$. We conclude that the only possible nonzero rows of $T_n(g)T_n(f)-T_n(gf)$ are those corresponding to multi-indices $l$ in the set $\{e,\ldots,n\}\backslash\{r+e,\ldots,n-r\}$. This set has cardinality $\widehat n-\prod_{i=1}^k(n_i-2r_i)$ and so $T_n(g)T_n(f)-T_n(gf)$ has at most $\widehat n-\prod_{i=1}^k(n_i-2r_i)$ nonzero rows. Now we should notice that each row of $T_n(g)T_n(f)-T_n(gf)$ is actually a block-row of size $s$, i.e., a $s\times s\widehat n$ submatrix of $T_n(g)T_n(f)-T_n(gf)$. Indeed, each component of $T_n(f)$, $T_n(g)$, $T_n(g)T_n(f)-T_n(gf)$ is actually a $s\times s$ matrix, see \eqref{mbTm_multi-index}. Therefore, the actual nonzero rows of $T_n(g)T_n(f)-T_n(gf)$ are at most $s[\widehat n-\prod_{i=1}^k(n_i-2r_i)]$ and \eqref{rank} is proved.
\endproof

\begin{Proposition}\label{TgTh-Tgh}
Let $f,g\in L^{\infty}(k,s)$, then $\|T_{n}(g)T_{n}(f)-T_{n}(gf)\|_{1}=o(\widehat n)$ as $n\to\infty$.
\end{Proposition}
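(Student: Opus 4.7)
The plan is to reduce from $L^\infty$ symbols to trigonometric polynomial symbols by a density argument, where the polynomial case is controlled by Proposition \ref{PropRank} and a simple rank–trace-norm inequality.

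\textbf{Polynomial case.} If $p,q$ are matrix-valued trigonometric polynomials and $r=\deg(q)$, then for $n\ge 2r+e$ Proposition \ref{PropRank} gives
$$
{\rm rank}\bigl(T_n(q)T_n(p)-T_n(qp)\bigr)\le s\bigl[\widehat n-\textstyle\prod_{i=1}^k(n_i-2r_i)\bigr]=o(\widehat n),\qquad n\to\infty.
$$
Combining this with the elementary bound $\|A\|_1\le{\rm rank}(A)\,\|A\|$ (the trace norm is the sum of at most ${\rm rank}(A)$ nonzero singular values, each $\le\|A\|$) and with $\|T_n(q)T_n(p)-T_n(qp)\|\le 2\|q\|_{L^\infty}\|p\|_{L^\infty}$ (via \eqref{Linfty} and the sub-multiplicativity of the spectral norm), one obtains $\|T_n(q)T_n(p)-T_n(qp)\|_1=o(\widehat n)$ for such $p,q$.

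\textbf{Approximation.} For $f,g\in L^\infty(k,s)$, I would choose matrix-valued trigonometric polynomials $p,q$ with $\|p\|_{L^\infty}\le\|f\|_{L^\infty}$, $\|q\|_{L^\infty}\le\|g\|_{L^\infty}$, and $\|f-p\|_{L^1},\|g-q\|_{L^1}$ as small as desired. Such $p,q$ are furnished, componentwise, by Fej\'er-type (Ces\`aro) means: since the $k$-variate Fej\'er kernel is nonnegative with mass one, the spectral norm passes through the convolution, so $\|\sigma_N f(x)\|\le\|f\|_{L^\infty}$ a.e., while $\|\sigma_N f-f\|_{L^1}\to0$ by a standard approximate-identity argument.

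\textbf{Telescoping.} I would then use the identity
$$
T_n(g)T_n(f)-T_n(gf)=T_n(g-q)T_n(f)+T_n(q)T_n(f-p)+[T_n(q)T_n(p)-T_n(qp)]+T_n(qp-gf),
$$
apply the H\"older inequality \eqref{p=1,q=infinito} together with \eqref{L1}, \eqref{Linfty}, and \eqref{Holder} to write $\|qp-gf\|_{L^1}\le\|q\|_{L^\infty}\|p-f\|_{L^1}+\|f\|_{L^\infty}\|q-g\|_{L^1}$, and collect:
\begin{align*}
\frac{\|T_n(g)T_n(f)-T_n(gf)\|_1}{\widehat n}
&\le\frac{1}{(2\pi)^k}\bigl(\|f\|_{L^\infty}\|g-q\|_{L^1}+\|q\|_{L^\infty}\|f-p\|_{L^1}+\|qp-gf\|_{L^1}\bigr)\\
&\quad+\frac{\|T_n(q)T_n(p)-T_n(qp)\|_1}{\widehat n}.
\end{align*}
The first parenthesis is bounded by a constant (depending only on $\|f\|_{L^\infty},\|g\|_{L^\infty}$) times $\|f-p\|_{L^1}+\|g-q\|_{L^1}$, while the second summand tends to $0$ by the polynomial case. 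Letting $n\to\infty$ first and then $\|f-p\|_{L^1},\|g-q\|_{L^1}\to0$ yields the claim.

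The only delicate point is step two: ensuring the polynomial approximants are \emph{simultaneously} close in $L^1$ and uniformly bounded in $L^\infty$. This is what forces the use of a nonnegative summability kernel (e.g.\ Fej\'er) rather than plain Fourier partial sums, and is the place where the matrix structure has to be handled carefully (one applies the kernel entrywise, but the $L^\infty$ control must be read in spectral norm, which is where nonnegativity of the kernel becomes essential).
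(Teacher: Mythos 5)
Your proof is correct and follows essentially the same route as the paper: approximate by trigonometric polynomials via nonnegative (Fej\'er/Ces\`aro) summability kernels so as to get simultaneous $L^1$-convergence and an $L^\infty$ bound, split the error with the triangle inequality, bound the ``smooth'' terms via \eqref{p=1,q=infinito}, \eqref{Holder}--\eqref{Linfty}, and handle the remaining polynomial commutator term with Proposition~\ref{PropRank} plus $\|A\|_1\le{\rm rank}(A)\|A\|$. The only cosmetic difference is that you approximate both $f$ and $g$, producing a four-term telescope; the paper exploits that Proposition~\ref{PropRank} already holds for $g$ a polynomial and $f\in L^1(k,s)$, so it approximates only $g$ and gets away with a three-term split -- your extra approximation is harmless but unnecessary.
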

\proof
Let $g_m:\mathbb{C}^{k}\rightarrow{\cal M}_{s},\ g_m=[{(g_m)}_{l,t}]_{l,t=1}^s,$ be a $k$-variate trigonometric polynomial of degree $m=(m_{1},\ldots,m_{k})$. Let $m^{-}:=(m_{1}^{-},\ldots,m_{k}^{-})$, where $m_i^{-}$ is the minimum degree among all the polynomials ${(g_m)}_{l,t}(x)$ with respect to the variable $x_i$. We choose $g_{m}$ such that $\|g_m\|_{L^{\infty}}\leq\|g\|_{L^{\infty}}$ for every $m$ and $\|g_{m}-g\|_{L^{1}}\to0$ as $m^-\to\infty$. The polynomials $g_m$ can be constructed by using the $m$-th Cesaro sum of $g$ (see \cite{Zygmund}) and indeed the linear positive character of the Cesaro operator and Korovkin theory \cite{Korovkin,KoroSINUM} imply the existence of a $g_m$ with the desired properties.
Note that, by \eqref{Lplp} with $p=1$, the fact that $\|g_m-g\|_{L^1}\to0$ as $m^-\to\infty$ is equivalent to saying that $\|{(g_m)}_{l,t}-g_{l,t}\|_{L^{1}}\rightarrow0$ as $m^{-}\rightarrow\infty$ for all $l,t=1,\ldots,s$. Now, by adding and subtracting and by using the triangle inequality several times we get
\begin{align}\label{NormDifference}
\notag&\|T_{n}(g)T_{n}(f)-T_{n}(gf)\|_{1}\\
&\leq\|T_{n}(g)T_{n}(f)-T_{n}(g_m)T_{n}(f)\|_{1}+\|T_{n}(g_m)T_{n}(f)-T_{n}(g_{m}f)\|_{1}+\|T_{n}(g_{m}f)-T_{n}(gf)\|_{1}.
\end{align}
Using the linearity of the operator $T_{n}(\cdot)$, the H\"older inequality \eqref{p=1,q=infinito} and \eqref{Holder}--(\ref{Linfty}), we obtain
\begin{align}
\label{delta1}\|T_{n}(g)T_{n}(f)-T_{n}(g_m)T_{n}(f)\|_{1}&\leq\|T_{n}(g-g_m)\|_{1}\|T_{n}(f)\|\leq\frac{\widehat{n}}{(2\pi)^{k}}\|g_{m}-g\|_{L^{1}}\|f\|_{L^{\infty}}\\
\label{delta3}\|T_{n}(g_{m}f)-T_{n}(gf)\|_{1}&\leq\frac{\widehat{n}}{(2\pi)^{k}}\|g_{m}f-gf\|_{L^{1}}\leq\frac{\widehat{n}}{(2\pi)^{k}}\|g_{m}-g\|_{L^{1}}\|f\|_{L^{\infty}}.
\end{align}
Moreover, using the relation $\|A\|_{1}\leq{\rm rank}(A)\|A\|$ for a square matrix $A$ and the inequality $(1+c)^k\ge1+kc$ for $c\ge-1$, and setting $\|m\|_{\infty}:=\max_{j}m_{j}$, Proposition \ref{PropRank} tells us that, for any $n\ge 2m+e$,
\begin{align}
\notag\|T_{n}(g_m)T_{n}(f)-T_{n}(g_{m}f)\|_{1}&\leq{\rm rank}(T_{n}(g_m)T_{n}(f)-T_{n}(g_{m}f))\|T_{n}(g_m)T_{n}(f)-T_{n}(g_{m}f)\|\\
\notag&\le s\widehat n\left[1-\prod_{i=1}^k\left(1-\frac{2m_i}{n_i}\right)\right](\|T_{n}(g_m)T_{n}(f)\|+\|T_{n}(g_{m}f)\|)\\
\notag&\le s\widehat n\left[1-\left(1-\frac{2\|m\|_\infty}{\min_jn_j}\right)^k\right](2\|g_m\|_{L^{\infty}}\|f\|_{L^{\infty}})\\
\label{delta2}&\le s\widehat nk\frac{2\|m\|_\infty}{\min_jn_j}(2\|g\|_{L^{\infty}}\|f\|_{L^{\infty}}) =4sk\|m\|_{\infty}\|g\|_{L^{\infty}}\|f\|_{L^{\infty}}\frac{\widehat{n}}{\min_jn_j}.
\end{align}
Substituting (\ref{delta1})--(\ref{delta2}) in (\ref{NormDifference}), for each $k$-tuple $m$ and for each $n\ge2m+e$ the following inequality holds:
\begin{align*}
\|T_{n}(g)T_{n}(f)-T_{n}(gf)\|_{1}\leq\widehat{n}\xi(m)+\gamma(m)\frac{\widehat n}{\min_jn_j},
\end{align*}
where $\xi(m):=2(2\pi)^{-k}\|g_{m}-g\|_{L^{1}}\|f\|_{L^{\infty}}$, $\gamma(m):=4sk\|m\|_{\infty}\|g\|_{L^{\infty}}\|f\|_{L^{\infty}}$, and we note that $\xi(m)\rightarrow 0$ as $m^{-}\rightarrow\infty$. Now, for $\epsilon>0$, we choose a $k$-tuple $m$ such that $\xi(m)<\epsilon/2$. For $n\to\infty$ (i.e. for $\min_jn_j\to\infty$) we have $\gamma(m)/\min_jn_j\to0$ and so we can choose a $\nu\ge2\|m\|_\infty+1$ such that $\gamma(m)/\min_jn_j\le\epsilon/2$ for $\min_jn_j\ge\nu$. Then, if $\min_jn_j\ge\nu$, we have $n\ge2m+e$ and
$$
\frac{\|T_{n}(g)T_{n}(f)-T_{n}(gf)\|_{1}}{\widehat{n}}\leq\epsilon.
$$
This means that $\dfrac{\|T_{n}(g)T_{n}(f)-T_{n}(gf)\|_{1}}{\widehat n}\to0$ as $n\to\infty$, i.e. $\|T_{n}(g)T_{n}(f)-T_{n}(gf)\|_{1}=o(\widehat{n})$ as $n\to\infty$.
\endproof

The following Lemma is the last result that we need for the proof of Theorem \ref{szego-nonherm-prec}. It shows that the function $h=g^{-1}f$ appearing in Theorem \ref{szego-nonherm-prec} belongs to $L^\infty(k,s)$.

\begin{Lemma}\label{hLoo}
Let $f,g\in L^\infty(k,s)$ with $0\notin{\rm Coh}[\mathcal{ENR}(g)]$, as in Theorem \ref{szego-nonherm-prec}. Then $h:=g^{-1}f\in L^\infty(k,s)$.
\end{Lemma}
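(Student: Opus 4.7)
The plan is to exploit the hypothesis $0\notin\mathrm{Coh}[\mathcal{ENR}(g)]$ to extract a uniform lower bound on the smallest singular value of $g(x)$, and from there deduce that $g^{-1}$ is essentially bounded; multiplying by $f\in L^\infty$ will then conclude.

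First I would observe that since $g\in L^\infty(k,s)$, the set $\mathcal{ENR}(g)$ is bounded and closed, hence compact, so $\mathrm{Coh}[\mathcal{ENR}(g)]$ is a compact convex subset of $\mathbb{C}$ not containing $0$. Therefore
\[
d:=d(\mathrm{Coh}[\mathcal{ENR}(g)],0)>0.
\]
Next, I would invoke the property recorded after Definition \ref{er-enr}: for almost every $x\in I_k$ and every $v\in\mathbb{C}^s$ with $\|v\|=1$, one has $v^*g(x)v\in\mathcal{ENR}(g)\subseteq\mathrm{Coh}[\mathcal{ENR}(g)]$, whence $|v^*g(x)v|\ge d$.

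The key step is to turn a bound on the numerical range into a bound on the inverse. By the Cauchy--Schwarz inequality,
\[
d\le|v^*g(x)v|\le\|v\|\,\|g(x)v\|=\|g(x)v\|
\]
for every unit vector $v$ and a.e. $x\in I_k$. This says $\sigma_{\min}(g(x))\ge d$ a.e., so $g(x)$ is invertible almost everywhere and
\[
\|g(x)^{-1}\|=\frac{1}{\sigma_{\min}(g(x))}\le\frac{1}{d}\quad\text{a.e.}
\]

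To conclude, I would note that measurability of $h=g^{-1}f$ follows from the fact that matrix inversion is continuous on the open set of invertible matrices, so $x\mapsto g(x)^{-1}$ is measurable where defined, and the product of two measurable matrix-valued functions is measurable. Finally, for a.e. $x\in I_k$,
\[
\|h(x)\|=\|g(x)^{-1}f(x)\|\le\|g(x)^{-1}\|\,\|f(x)\|\le\frac{\|f\|_{L^\infty}}{d},
\]
which gives $h\in L^\infty(k,s)$ with $\|h\|_{L^\infty}\le\|f\|_{L^\infty}/d$. I do not foresee any real obstacle here; the only subtle point is justifying that $|v^*Av|\le\|Av\|$ upgrades a numerical-range bound to a singular-value bound, but this is just Cauchy--Schwarz, and the rest is a direct application of the hypotheses together with the compactness of $\mathrm{Coh}[\mathcal{ENR}(g)]$.
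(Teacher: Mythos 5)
Your proof is correct, and it takes a genuinely different and more economical route than the paper's. The paper, having established that $d:=d(\mathrm{Coh}[\mathcal{ENR}(g)],0)>0$, invokes Lemma \ref{lemma_geometrico} to get sectoriality of $g$, then extracts a separating line $z$ with $d(z,0)=d$ via Theorem \ref{locsv4}, rotates by $\omega(z)$ to push $\mathcal{ENR}(\omega(z)g)$ into the half-plane $\{\mathrm{Re}(w)\ge d\}$, applies the minimax principle to bound $\lambda_{\min}(\mathrm{Re}(\omega(z)g(x)))\ge d$, and finally calls the Fan--Hoffman theorem to transfer this to the singular value bound $\sigma_{\min}(g(x))\ge d$. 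You instead go straight from the a.e.\ inclusion $v^*g(x)v\in\mathcal{ENR}(g)\subseteq\mathrm{Coh}[\mathcal{ENR}(g)]$ (recorded after Definition \ref{er-enr}) to $|v^*g(x)v|\ge d$ for all unit $v$, and then apply Cauchy--Schwarz to conclude $\|g(x)v\|\ge|v^*g(x)v|\ge d$, hence $\sigma_{\min}(g(x))\ge d$ directly. This bypasses sectoriality, separating lines, rotation numbers, the minimax principle, and Fan--Hoffman entirely, while delivering exactly the same bound $\|g^{-1}(x)\|\le 1/d$. Your remark on measurability of $g^{-1}$ (and hence $h$) is also a welcome addition that the paper leaves tacit. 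The trade-off is that the paper's route re-uses machinery it has already set up for the localization results, whereas yours is self-contained and would be the natural proof if one were writing Lemma \ref{hLoo} in isolation.
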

\begin{proof}
Since $g\in L^\infty(k,s)$ and $0\notin{\rm Coh}[\mathcal{ENR}(g)]$, the convex hull ${\rm Coh}[{\cal ENR}(g)]$ is compact, the distance $d:=d({\rm Coh}[\mathcal{ENR}(g)],0)$ is positive, and $g$ is invertible a.e. (recall the discussion before the statement of Theorem \ref{szego-nonherm-prec}). We are going to show that
\begin{equation}\label{g-1}
\|g^{-1}(x)\|\le\frac1d,\quad\mbox{for a.e. $x\in I_k$.}
\end{equation}
Since in a matrix the absolute value of each component is bounded from above by the spectral norm, once we have proved \eqref{g-1}, it follows that $g^{-1}\in L^\infty(k,s)$, and the Lemma is proved. Now, by the fact that $d>0$ and by Lemma \ref{lemma_geometrico}, $g$ is sectorial. By Theorem \ref{locsv4}, first item, there exists a separating line $z$ for $\mathcal{ENR}(g)$ such that $d(z,0)=d$. Let $H_1$ be the open half-plane determined by $z$ satisfying $H_1\bigcap\mathcal{ENR}(g)=\emptyset$ and $0\in H_1\bigcup z$, and let $\omega(z)$ be the rotation number (of modulus 1) for which $\omega(z)\cdot z=\{w\in\mathbb C:{\rm Re}(w)=d(z,0)\}$. Then
\begin{equation}\label{enr_quadrato}
\mathcal{ENR}(\omega(z)g)=\omega(z)\cdot \mathcal{ENR}(g)\subseteq\{w\in\mathbb C:{\rm Re}(w)\ge d\}.
\end{equation}
Now observe that, for a.e. $x\in I_k$, $v^*[\omega(z)g(x)]v\in\mathcal{ENR}(\omega(z)g)$ for all $v\in\mathbb C^s$ with $\|v\|=1$ (see the discussion after Definition \ref{er-enr}). Therefore, by \eqref{enr_quadrato}, for a.e. $x\in I_k$ we have
$$v^*{\rm Re}(\omega(z)g(x))v={\rm Re}(v^*[\omega(z)g(x)]v)\ge d, \quad\forall v\in\mathbb C^s\mbox{ with }\|v\|=1,$$
which implies, by the minimax principle \cite{Bhatia},
$$\lambda_{\min}({\rm Re}(\omega(z)g(x)))\ge d.$$
Hence, by the Fan-Hoffman theorem \cite[Proposition III.5.1]{Bhatia}, for a.e. $x\in I_k$ we have
$$\|g^{-1}(x)\|=\frac1{\sigma_{\min}(g(x))}=\frac1{\sigma_{\min}(\omega(z)g(x))}\le\frac1{\lambda_{\min}({\rm Re}(\omega(z)g(x)))}\le\frac1d,$$
and \eqref{g-1} is proved.
\end{proof}

We are now ready to prove Theorem \ref{szego-nonherm-prec}. We will show that, under the assumptions of Theorem \ref{szego-nonherm-prec}, the conditions $\boldsymbol{(c_1)}$--$\boldsymbol{(c_4)}$ of Theorem \ref{mergelyan-cons-improved-2} are met with $(h,G)=(g^{-1}f,I_k)$, for any matrix-sequence $\{T_{n(m)}^{-1}(g)T_{n(m)}(f)\}_m$ extracted from $\{T_n^{-1}(g)T_n(f)\}_{n\in\mathbb N^k}$ and such that $\min_jn_j(m)\to\infty$. Actually, to simplify the notation, we suppress the index $m$ and we will talk about a generic matrix-sequence $\{T_n^{-1}(g)T_n(f)\}$ such that $\min_jn_j\to\infty$, where it is understood the presence of an underlying index $m$.

\begin{proof}[Proof of Theorem \ref{szego-nonherm-prec}.]
As observed in the proof of Lemma \ref{hLoo}, $d:=d({\rm Coh}[\mathcal{ENR}(g)],0)$ is positive. Hence, by Theorem \ref{locsv4},
\begin{equation*}
\|T_n^{-1}(g)\|=\frac1{\sigma_{\min}(T_n(g))}\le\frac1d.
\end{equation*}
By hypothesis $f\in L^\infty(k,s)$ and by \eqref{Linfty}, it follows that
\[ \|T_n^{-1}(g)T_n(f)\| \le \|f\|_{L^\infty}/d, \]
so that requirement $\boldsymbol{(c_1)}$ in Theorem \ref{mergelyan-cons-improved-2} is satisfied.
Since $h\in L^\infty(k,s)$ (by Lemma \ref{hLoo}) and since $\mathcal{ER}(h)$ has empty interior and does not disconnect the complex plane (by hypothesis), $h$ satisfies the assumptions of Theorem \ref{szego-nonherm} and so $\{T_n(h)\}\sim_\lambda(h,I_k).$ Therefore, using the inequality $|{\rm tr}(A)|\leq\|A\|_1$ for a square matrix $A$ (see \cite[Theorem II.3.6, Eq.\,(II.23)]{Bhatia}), item $\boldsymbol{(c_2)}$ in Theorem \ref{mergelyan-cons-improved-2} is proved if we show that
\begin{equation}\label{sn}
\|(T_n^{-1}(g)T_n(f))^N-T_n(h)^N\|_1=o(\widehat n)
\end{equation}
for every non-negative integer $N$. If $N=0$ the result is trivial. For $N=1$, using Proposition \ref{TgTh-Tgh} we obtain
\begin{align*}
\|T_n^{-1}(g)T_n(f)-T_n(h)\|_1 & = \|T_n^{-1}(g)(T_n(f)-T_n(g)T_n(h))\|_1 \\
                               & \le \|T_n^{-1}(g)\| \|T_n(f)-T_n(g)T_n(h)\|_1 \\
                               & \le \frac{1}{d}\|T_n(f)-T_n(g)T_n(h)\|_1 = o(\widehat n),
\end{align*}
so (\ref{sn}) is satisfied and we can write $T_n^{-1}(g)T_n(f)=T_n(h)+R_{n}$ with $\|R_{n}\|_{1}=o(\widehat n)$. Using this, when $N\geq 2$ we have
$$(T_n^{-1}(g)T_n(f))^N=(T_n(h))^N+S_n,$$
where $S_n$ is the sum of all possible (different) combinations of products of $j$ matrices $T_n(h)$ and $\ell$ matrices $R_{n}$, with $j+\ell=N$, $j\neq N$. By using the H\"older inequality \eqref{p=1,q=infinito}, and taking into account that $R_n=T_n^{-1}(g)T_n(f)-T_n(h)$, for every summand $S$ of $S_n$ we have
\begin{align*}
\|S\|_{1} & \leq\|T_n(h)\|^{j}\|R_{n}\|^{\ell-1}\|R_{n}\|_{1}\\
          & \leq\|h\|_{L^\infty}^{j}(\|f\|_{L^\infty}/d+\|h\|_{L^\infty})^{\ell-1}o(\widehat n)\leq C o(\widehat n),
\end{align*}
where $C$ is some positive constant. So, since the number of summands in $S_n$ is finite, (\ref{sn}) holds for every positive integer $N$, and requirement $\boldsymbol{(c_2)}$ in Theorem \ref{mergelyan-cons-improved-2} is then satisfied.
Requirement $\boldsymbol{(c_3)}$ in Theorem \ref{mergelyan-cons-improved-2} is also satisfied, because the sequences of multilevel block Toeplitz matrices with $L^1(k,s)$ symbols belong to the GLT class together with their algebra (see Section 3.3.1 in \cite{glt-vs-fourier}).
Finally, by taking into account that ${\cal ER}(h)$ has empty interior and does not disconnect the complex plane, the last condition $\boldsymbol{(c_4)}$ in Theorem \ref{mergelyan-cons-improved-2} is met, and the application of Theorem \ref{mergelyan-cons-improved-2} shows that $\{T_n^{-1}(g)T_n(f)\}\sim_\lambda(h,I_k)$.
\end{proof}

\section{Some applications and numerical experiments}\label{num}

In this section we consider a list of numerical examples which cover different situations. The first subsection is devoted to examples that involve 1-level matrix-valued symbols, while the second contains 2-level examples.

\subsection{Univariate examples}

Fixed $s=2$ and $k=1$, we consider $f$ and $g$ of the form
\begin{equation}\label{fg}
\begin{split}
f(x) & = Q(x)A(x)Q(x)^{T} \\
g(x) & = Q(x)B(x)Q(x)^{T}
\end{split}
\end{equation}
where
\begin{equation*}
Q(x)=\left(
\begin{array}{cc}
\cos (x) & \sin (x) \\
-\sin (x) & \cos (x)
\end{array}
\right),
\end{equation*}
while $A(x)$ and $B(x)$ vary from case to case. For each example, we focus our attention on the spectral behavior of the matrices $T_n(f)$ for different sizes $n$ and on the solution of the associated linear system with a random right-hand side. From a computational point of view, to solve such systems, we apply (full or preconditioned) GMRES with tolerance $10^{-6}$ using the Matlab built-in \texttt{gmres} function.

\begin{figure}[htb]
\centering
\begin{minipage}[l]{.23\textwidth}
\includegraphics[width=3.9cm]{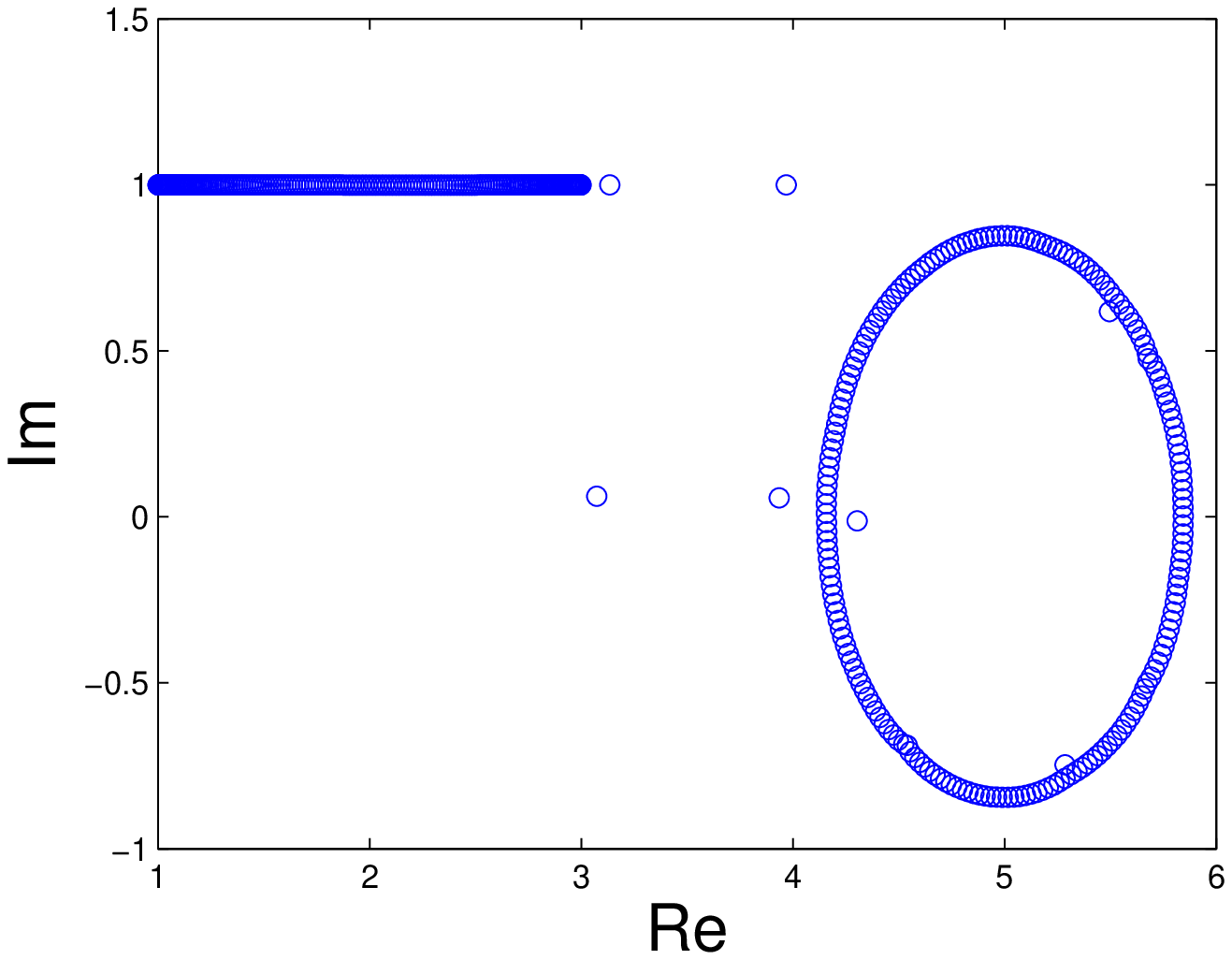}
\centering \footnotesize$r=1$
\end{minipage}
\begin{minipage}[l]{.23\textwidth}
\includegraphics[width=3.9cm]{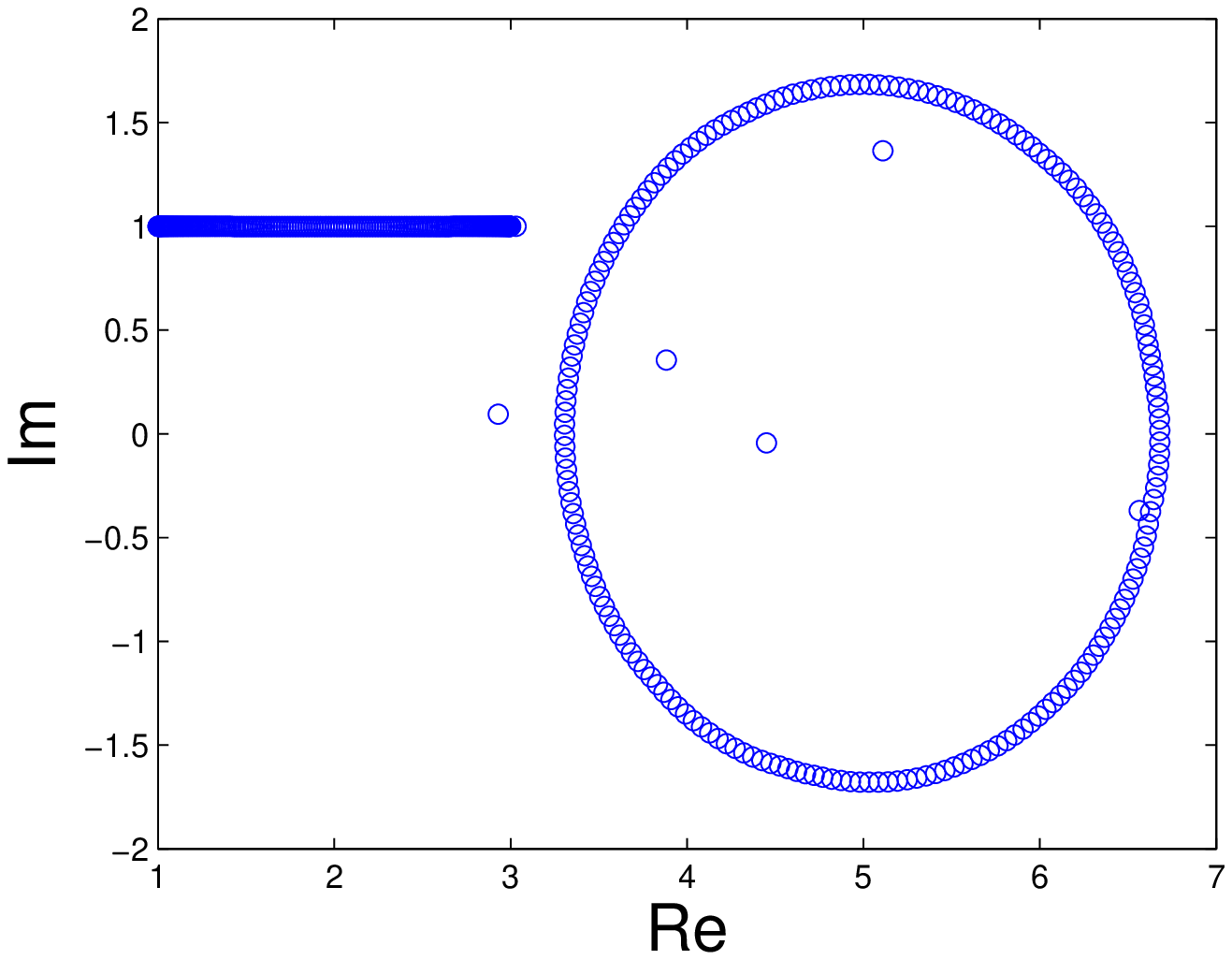}
\centering \footnotesize$r=2$
\end{minipage}
\begin{minipage}[l]{.23\textwidth}
\includegraphics[width=3.9cm]{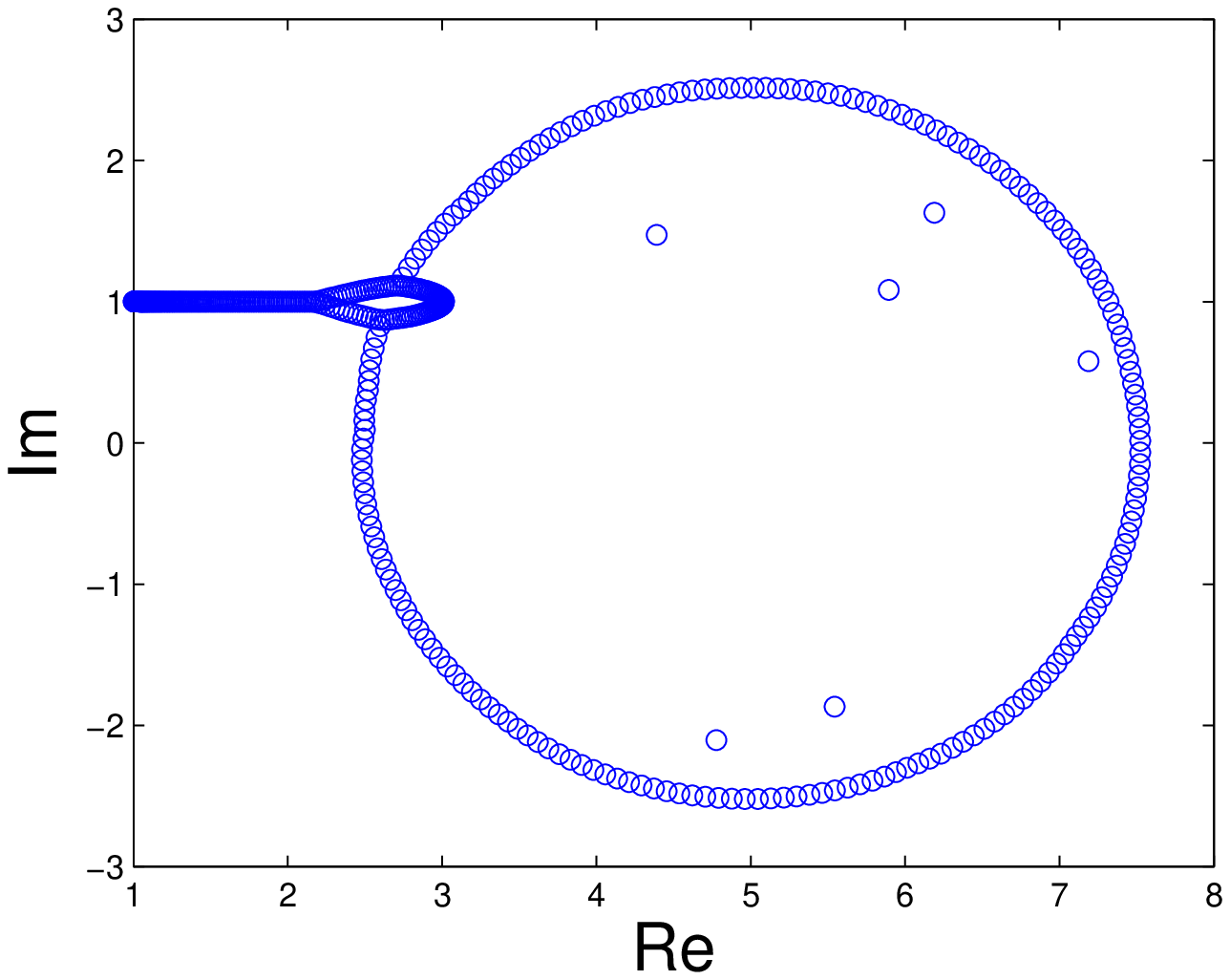}
\centering \footnotesize$r=3$
\end{minipage}
\begin{minipage}[l]{.23\textwidth}
\includegraphics[width=3.9cm]{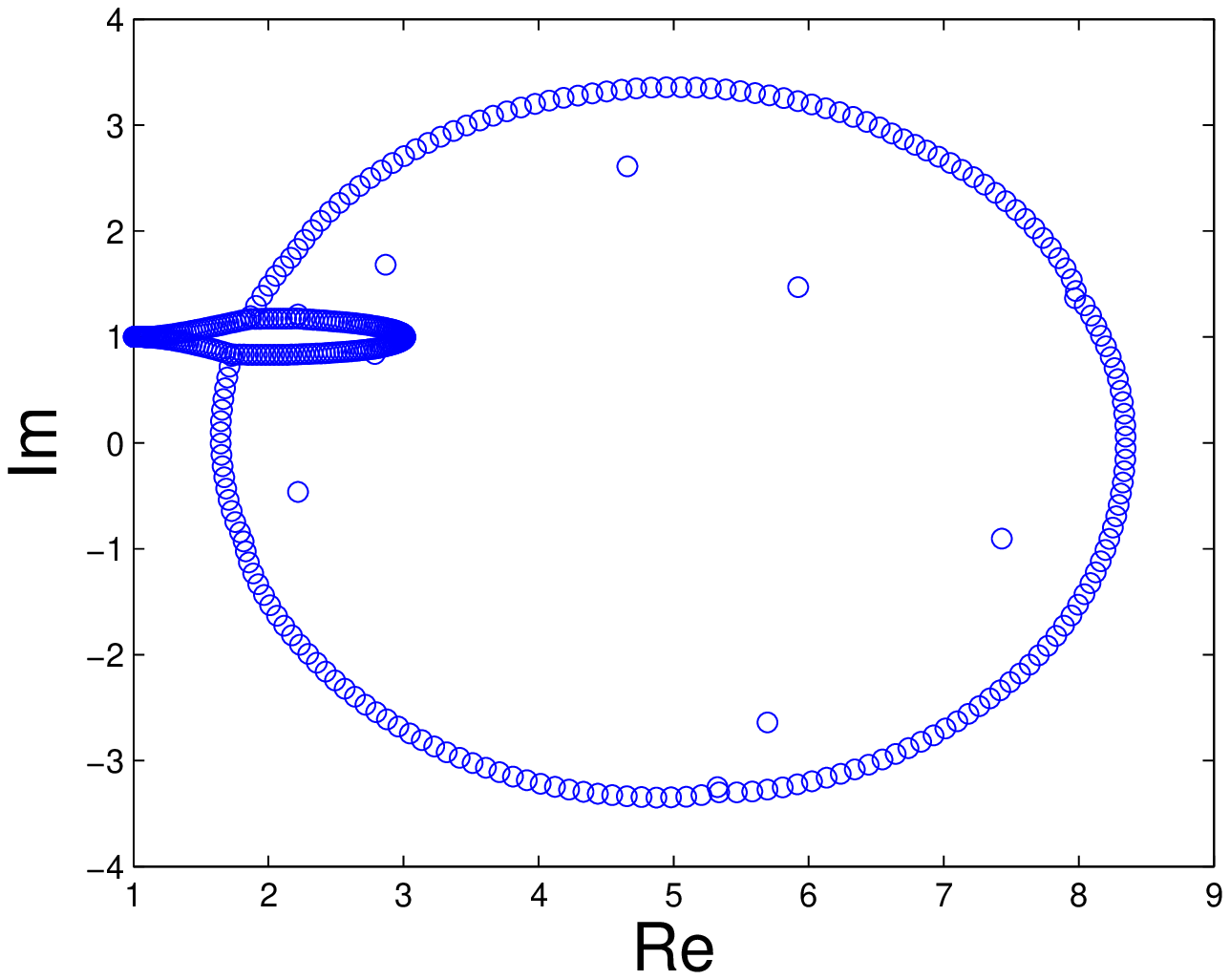}
\centering \footnotesize$r=4$
\end{minipage}
\caption{Eigenvalues in the complex plane
of $T_{200}(f_1)$ for $r=1,2,3,4$}\label{casotot}
\end{figure}

\begin{figure}[htb]
\centering
\begin{minipage}[l]{.23\textwidth}
\includegraphics[width=3.9cm]{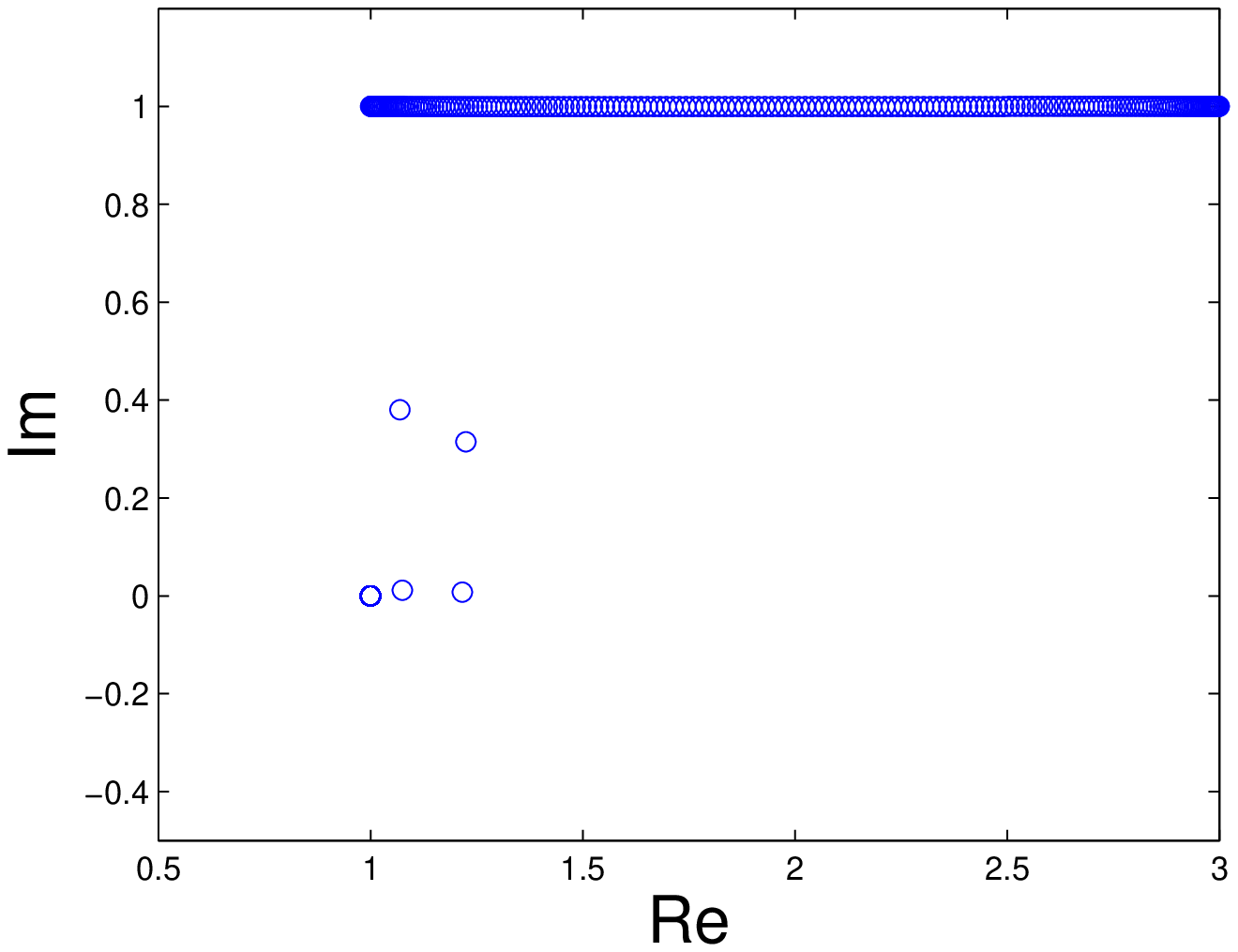}
\centering \footnotesize$r=1$
\end{minipage}
\begin{minipage}[l]{.23\textwidth}
\includegraphics[width=3.9cm]{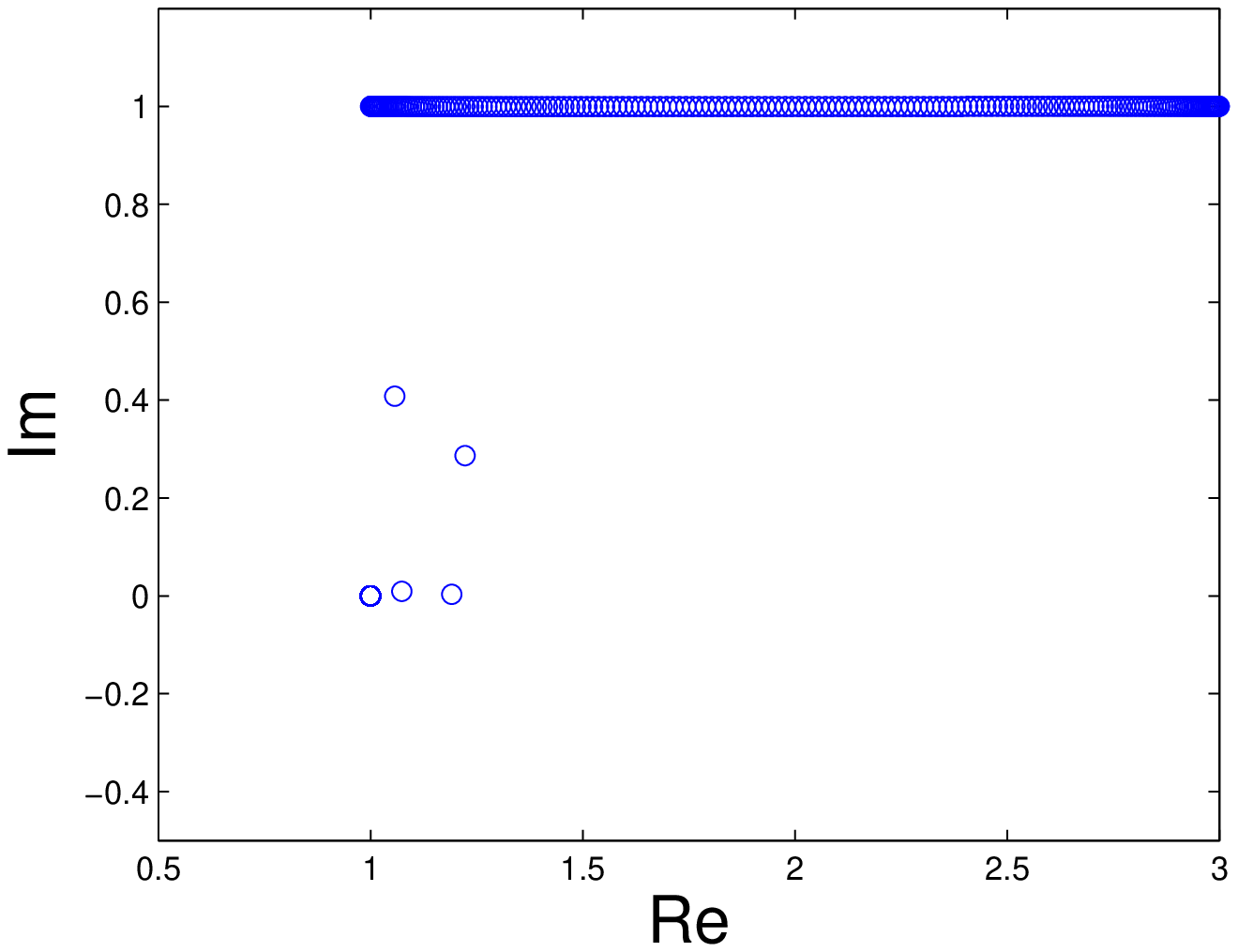}
\centering \footnotesize$r=2$
\end{minipage}
\begin{minipage}[l]{.23\textwidth}
\includegraphics[width=3.9cm]{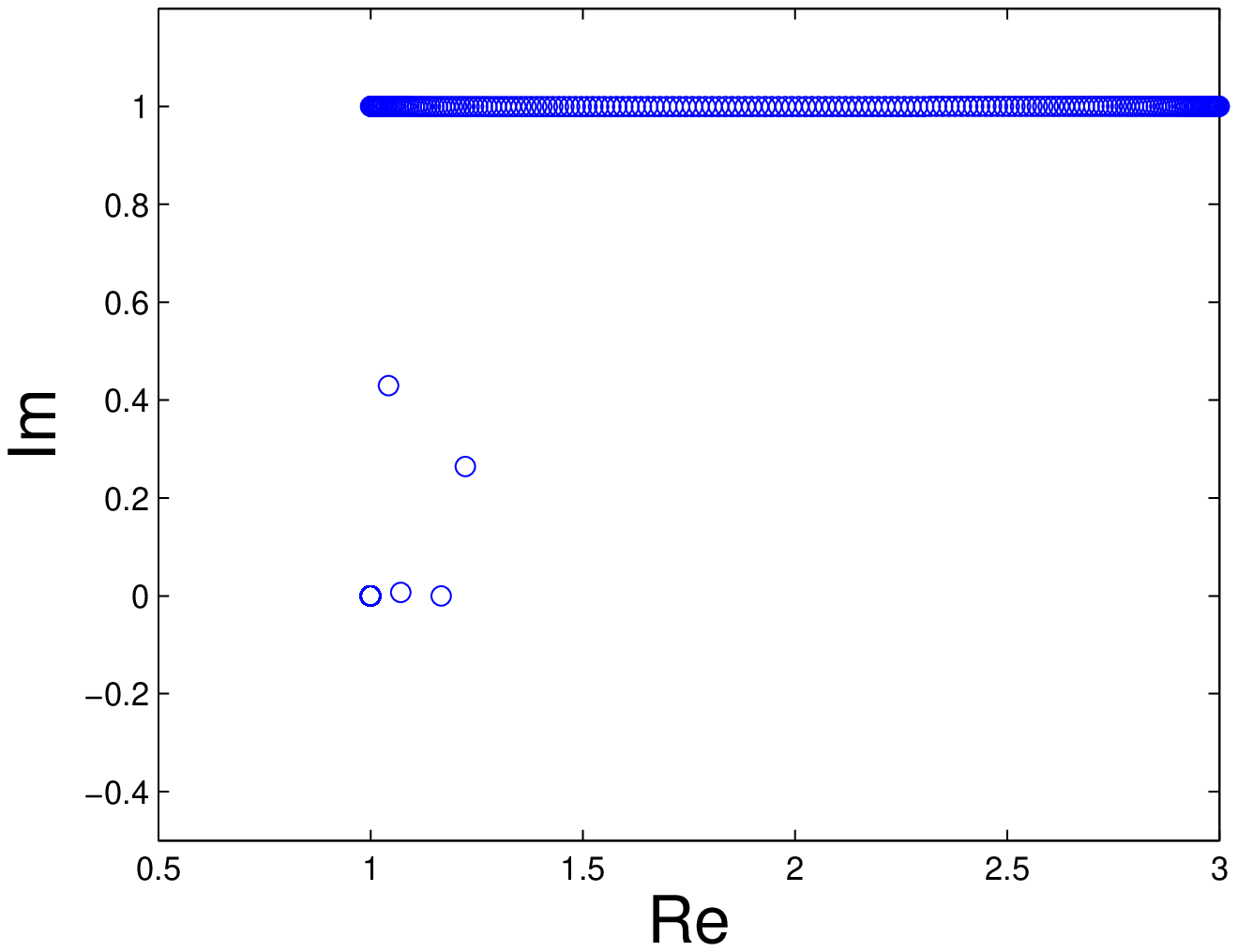}
\centering \footnotesize$r=3$
\end{minipage}
\begin{minipage}[l]{.23\textwidth}
\includegraphics[width=3.9cm]{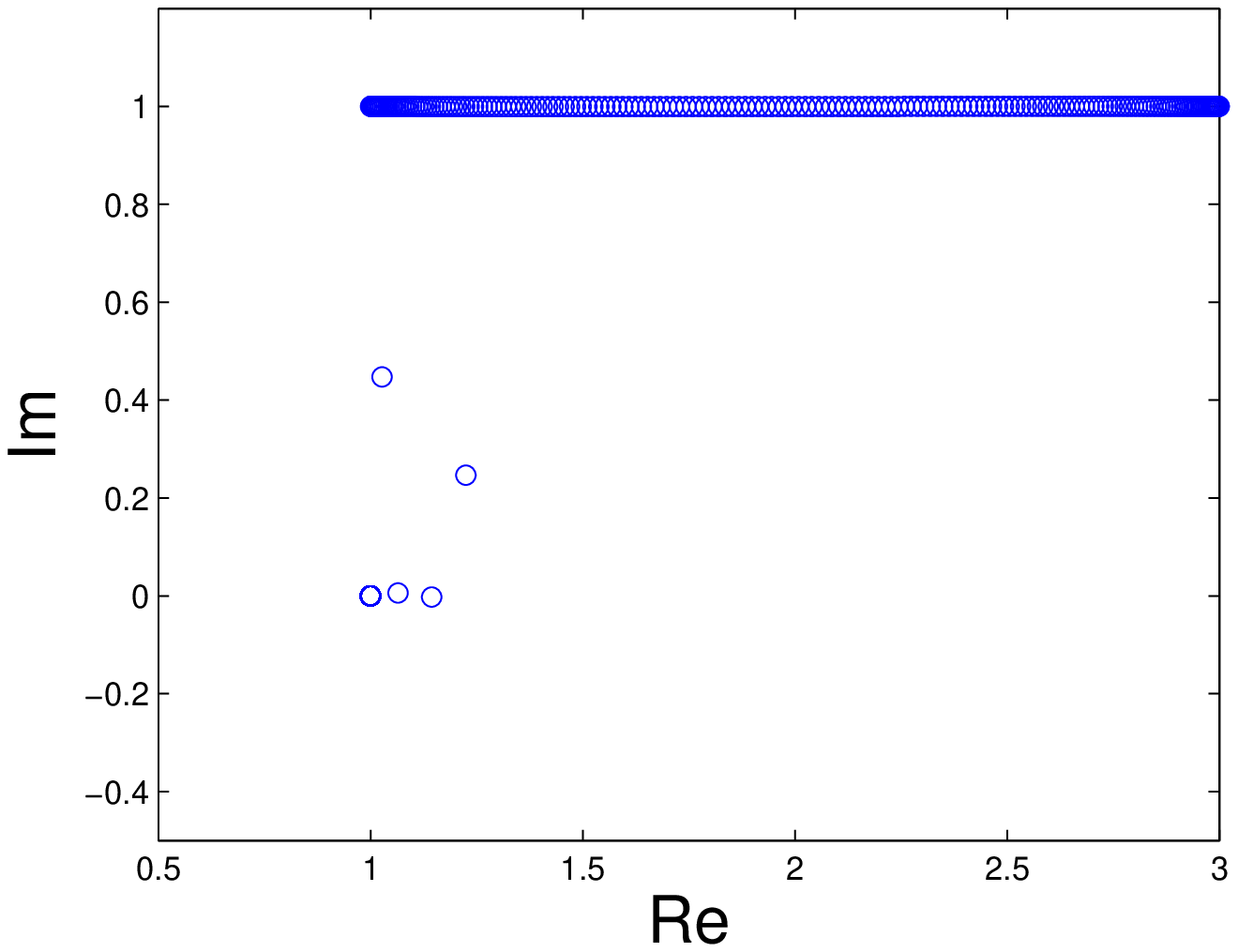}
\centering \footnotesize$r=4$
\end{minipage}
\caption{Eigenvalues in the complex plane
of $T^{-1}_{200}(g_1)T_{200}(f_1)$ for $r=1,2,3,4$}\label{casotot2}
\end{figure}

\begin{Case}\label{Case 1}
Let us choose $A^{(1)}(x)$ and $B^{(1)}(x)$ as follows
\begin{equation*}
\begin{array}{cc}
A^{(1)}(x)=\left(
\begin{array}{cc}
2+{\mathbf i}+\cos(x) & 0 \\
1 & 5+r{\rm e}^{{\mathbf i}x}
\end{array}
\right) , & B^{(1)}(x)=\left(
\begin{array}{cc}
1 & 0 \\
0 & 5+r{\rm e}^{{\mathbf i}x}
\end{array}
\right),
\end{array}
\end{equation*}
where $r$ is a real positive parameter, and define
\begin{align*}
f_{1}(x) & = Q(x)A^{(1)}(x)Q(x)^{T} \\
g_{1}(x) & = Q(x)B^{(1)}(x)Q(x)^{T}.
\end{align*}

Figures \ref{casotot} and \ref{casotot2} refer to the eigenvalues in the complex plane of $T_{200}(f_{1})$ and $T^{-1}_{200}(g_1)T_{200}(f_{1})$ for $r=1,2,3,4$. As expected (see Remark \ref{Toeplitz_distribution}), the eigenvalues of $T_{200}(f_{1})$  are distributed as $\lambda_1(f_1(x))=\lambda_1(A^{(1)}(x))=A^{(1)}_{1,1}(x)$ and $\lambda_2(f_1(x))=\lambda_2(A^{(1)}(x))=A^{(1)}_{2,2}(x)$, and in fact they are clustered at the union of the ranges of $\lambda_1(f_1(x))$ and $\lambda_2(f_1(x))$, which is the essential range of $f_1(x)$. More precisely, the matrix $T_{200}(f_{1})$ has two sub-clusters for the eigenvalues: one collects the eigenvalues with real part in $[1,3]$ and imaginary part around $1$ (such eigenvalues recall the behavior of the function $\lambda_1(f_1(x))$); the other, miming $\lambda_2(f_1(x))$, is made by a circle centered in $5$ with radius $r$, in agreement with theoretical results. We know that the GMRES in this case is optimal, since the eigenvalues of $f_1$ have no zeros.
Indeed, Table \ref{tab3bis} shows that, fixed $r=4.8$ and varying $n$, the number of iterations does not depend on $n$. The deteriorating behavior of GMRES when $n$ is fixed and $r$ increases (cf. Table \ref{tab3}) is due to the fact that some eigenvalues of $T_{200}(f_1)$ become close to zero, since the range of $\lambda_2(f_1(x))$ approaches zero as $r$ increases.
Looking at Figure \ref{casotot2}, if we use the preconditioner $T^{-1}_{200}(g_1)$, we improve the cluster of the eigenvalues and so the GMRES converges with a constant number of iterations (cf. Table \ref{tab3bis}), which is substantially independent both on $n$ and $r$.

This example fits with the theoretical results of this paper, since $f_1$ and $g_1$ are both bounded and $0\notin{\rm Coh}[\mathcal{ENR}(g_1)]$.
We stress that $g_1$ has been chosen so that the essential range of
$$h_1(x)=g_1^{-1}(x)f_1(x)=Q(x)\left(\begin{array}{cc}2+\mathbf i+\cos(x) & 0\\ 1/(5+r{\rm e}^{\mathbf i x}) & 1\end{array}\right)Q(x)^T,$$
which is given by
$$\mathcal{ER}(h_1)=\mathcal{ER}(\lambda_1(h_1))\bigcup\mathcal{ER}(\lambda_2(h_1))=\{t+\mathbf i:1\le t\le 3\}\bigcup\{1\},$$
is `compressed' and `well separated from 0' independently of the value of $r$.
In this way, since Theorem \ref{szego-nonherm-prec} and Remark \ref{Golinskii} ensure that the matrix-sequence $\{T_n^{-1}(g_1)T_n(f_1)\}$ is weakly clustered at $\mathcal{ER}(h_1)$, we expect a number of preconditioned GMRES iterations independent of $r,n$ and `small enough'. This is confirmed by the results in Tables \ref{tab3bis}--\ref{tab3}.
\end{Case}

\begin{Case}\label{Case 2}
Let us choose $A^{(2)}(x)$ and $B^{(2)}(x)$ as follows
\begin{equation*}
\begin{array}{cc}
A^{(2)}(x)=\left(
\begin{array}{cc}
2+{\mathbf i}+\cos(x) & 0 \\
1/(x^2-1) & 5+r{\rm e}^{{\mathbf i}x}
\end{array}
\right) , & B^{(2)}(x)=B^{(1)}(x)
\end{array}
\end{equation*}
and define
\begin{align*}
f_{2}(x) &=Q(x)A^{(2)}(x)Q(x)^{T} \\
g_{2}(x) &=Q(x)B^{(2)}(x)Q(x)^{T}.
\end{align*}
\begin{table}[htb]
\centering
\begin{minipage}[l]{.30\textwidth}
\centering
\begin{tabular}{c|cc}
& \multicolumn{2}{|c}{Iterations} \\
$n$ & No Prec. & Prec. \\ \hline
50 & 55 & 14 \\
100 & 98 & 14 \\
200 & 179 & 13 \\
400 & 230 & 14 \\
800 & 235 & 13
\end{tabular}
\caption{Number of GMRES iterations for $T_{n}(f_{1})$ and $T_{n}^{-1}(g_{1})T_{n}(f_{1})$ fixed $r=4.8$ and varying $n$}
\label{tab3bis}
\end{minipage}
\hspace{5mm}
\begin{minipage}[l]{.30\textwidth}
\centering
\begin{tabular}{c|cc}
& \multicolumn{2}{|c}{Iterations} \\
$r$ & No Prec. & Prec. \\ \hline
1 & 17 & 14 \\
2 & 22 & 14 \\
3 & 31 & 14 \\
4 & 55 & 14 \\
4.8 & 185 & 14
\end{tabular}
\caption{Number of GMRES iterations for $T_{200}(f_{1})$ and $T_{200}^{-1}(g_{1})T_{200}(f_{1})$ varying $r$}
\label{tab3}
\end{minipage}
\hspace{5mm}
\begin{minipage}[l]{.30\textwidth}
\centering
\begin{tabular}{c|cc}
& \multicolumn{2}{|c}{Iterations} \\
$r$ & No Prec. & Prec. \\ \hline
1 & 17 & 13 \\
2 & 22 & 13 \\
3 & 30 & 13 \\
4 & 53 & 13 \\
4.8 & 183 & 13
\end{tabular}
\caption{Number of GMRES iterations for $T_{200}(f_{2})$ and $T_{200}^{-1}(g_{2})T_{200}(f_{2})$ varying $r$}
\label{tab4}
\end{minipage}
\end{table}
Although this case is not covered by the theory, since $f_2$ is not bounded, we find that the eigenvalues of $T_{200}^{-1}(g_{2})T_{200}(f_{2})$ are closely related to the eigenvalues of $g_{2}^{-1}f_{2}$. The graphs of such eigenvalues (not reported here) are very similar to those in Figure \ref{casotot2}. Table \ref{tab4} shows the number of GMRES iterations, setting $n=200$ and moving the radius of the disk used to define $f_{2}$ and $g_{2}$.
\end{Case}

\begin{figure}[htb]
\centering
\begin{minipage}[l]{.30\textwidth}
\includegraphics[width=5cm, height=4cm]{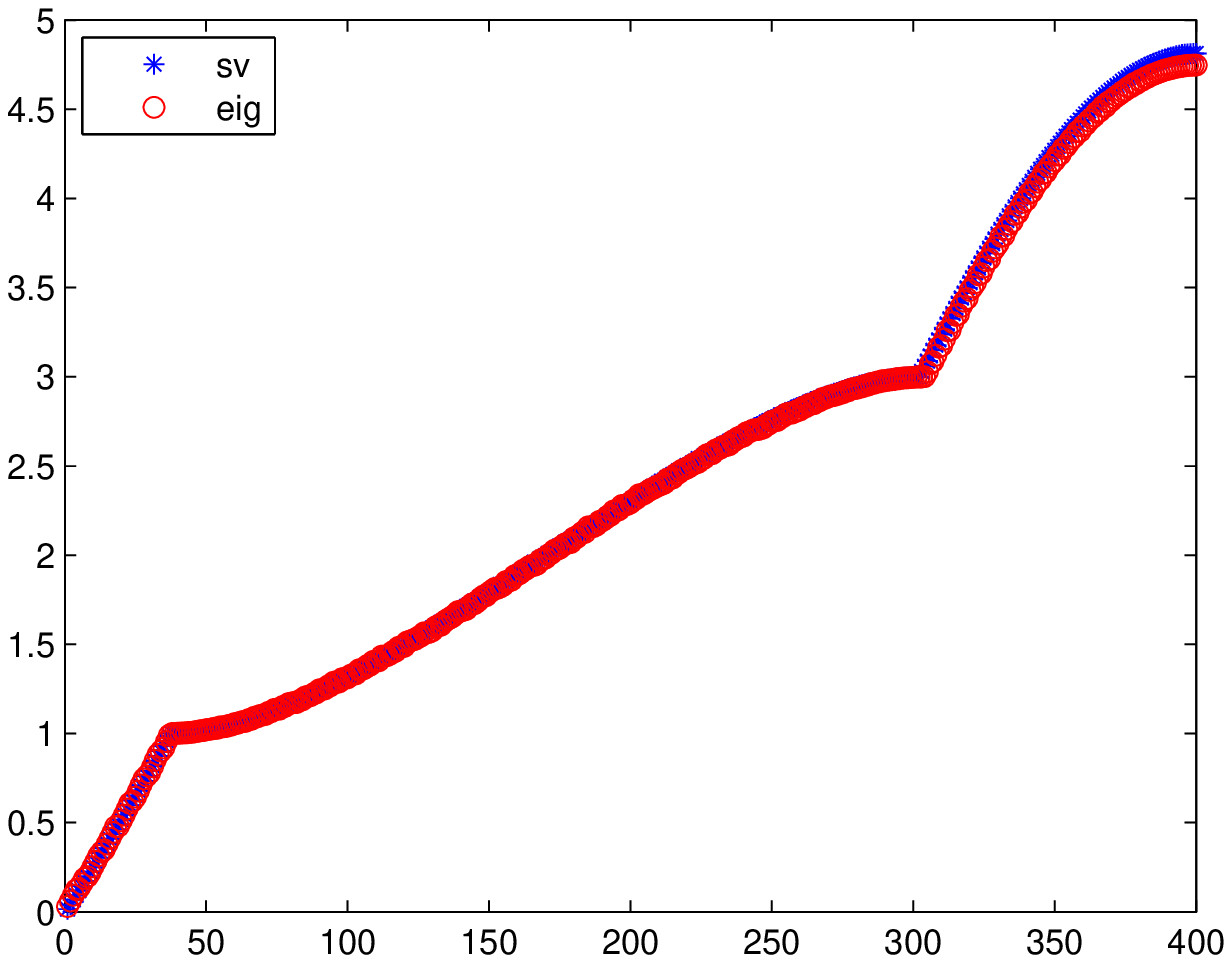}
\caption{Singular values and moduli of the eigenvalues of $T_{200}(f_3)$}\label{caso1svtilde}
\end{minipage}
\hspace{10mm}
\begin{minipage}[l]{.30\textwidth}
\includegraphics[width=5cm, height=4cm]{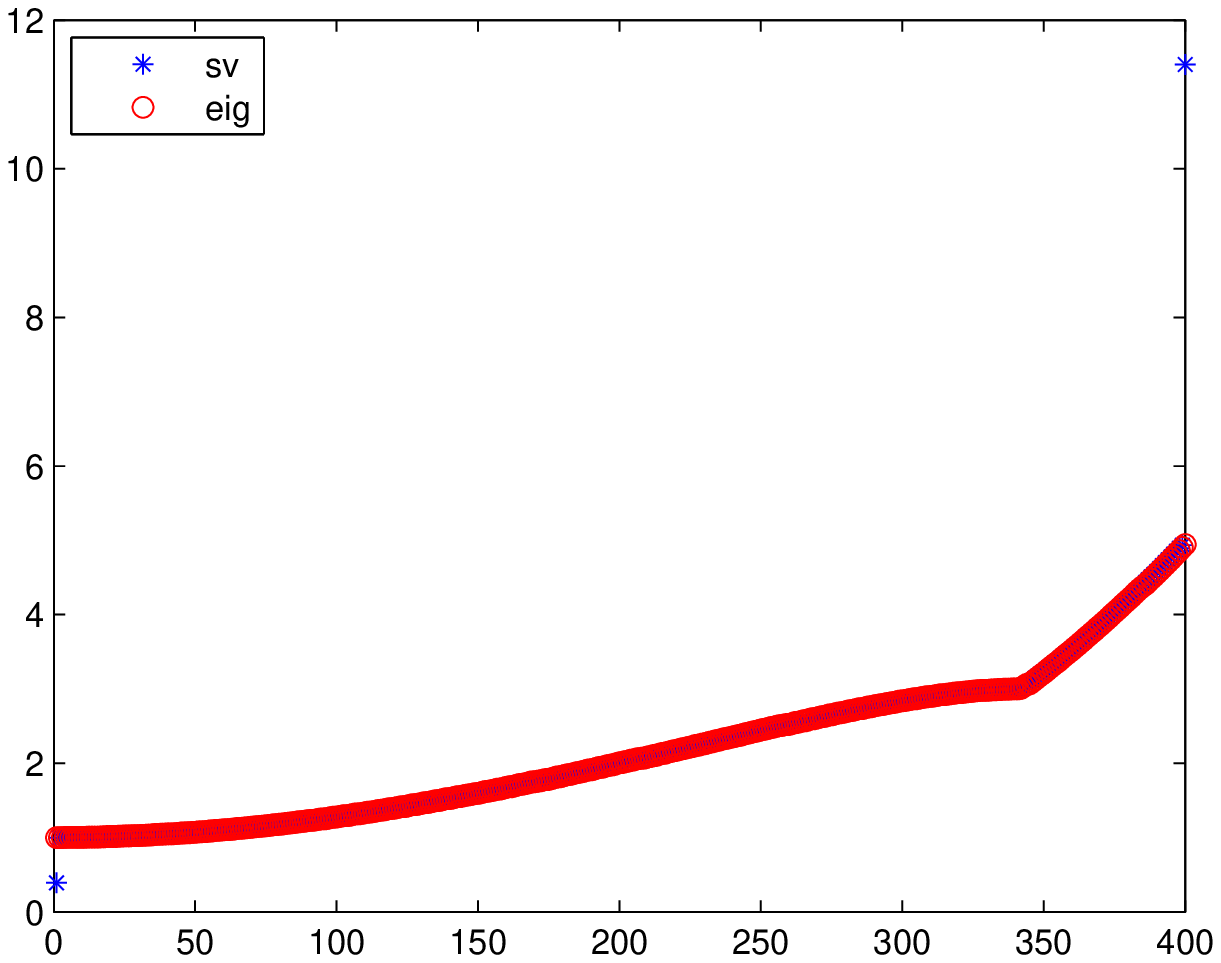}
\caption{Singular values and moduli of the eigenvalues of $T_{200}^{-1}(g_3)T_{200}(f_3)$}
\label{caso1svpretilde}
\end{minipage}
\\
\begin{minipage}[l]{.30\textwidth}
\includegraphics[width=5cm, height=4cm]{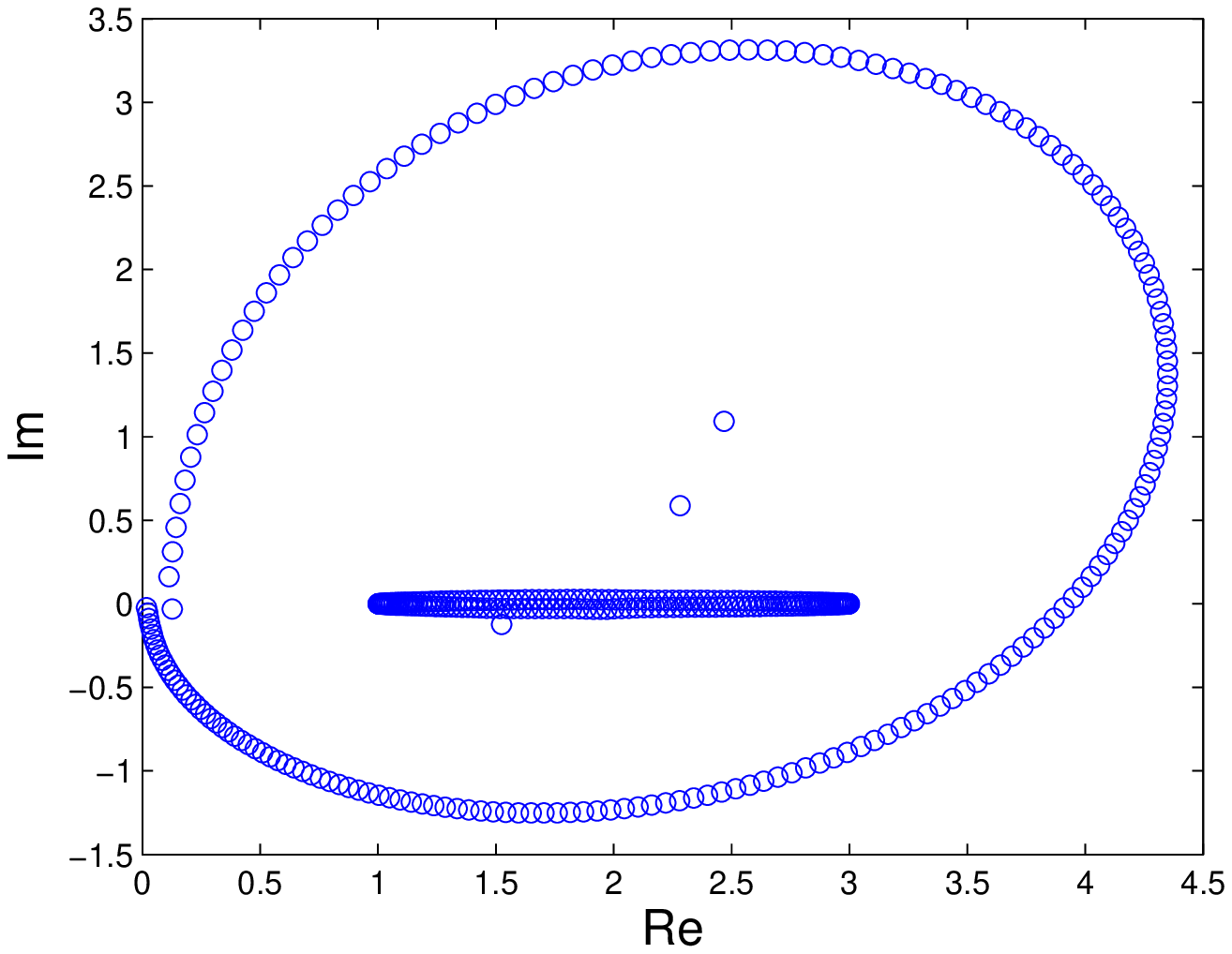}
\caption{Eigenvalues in the complex plane of $T_{200}(f_3)$}\label{caso1eigtilde}
\end{minipage}
\hspace{10mm}
\begin{minipage}[l]{.30\textwidth}
\includegraphics[width=5cm, height=4cm]{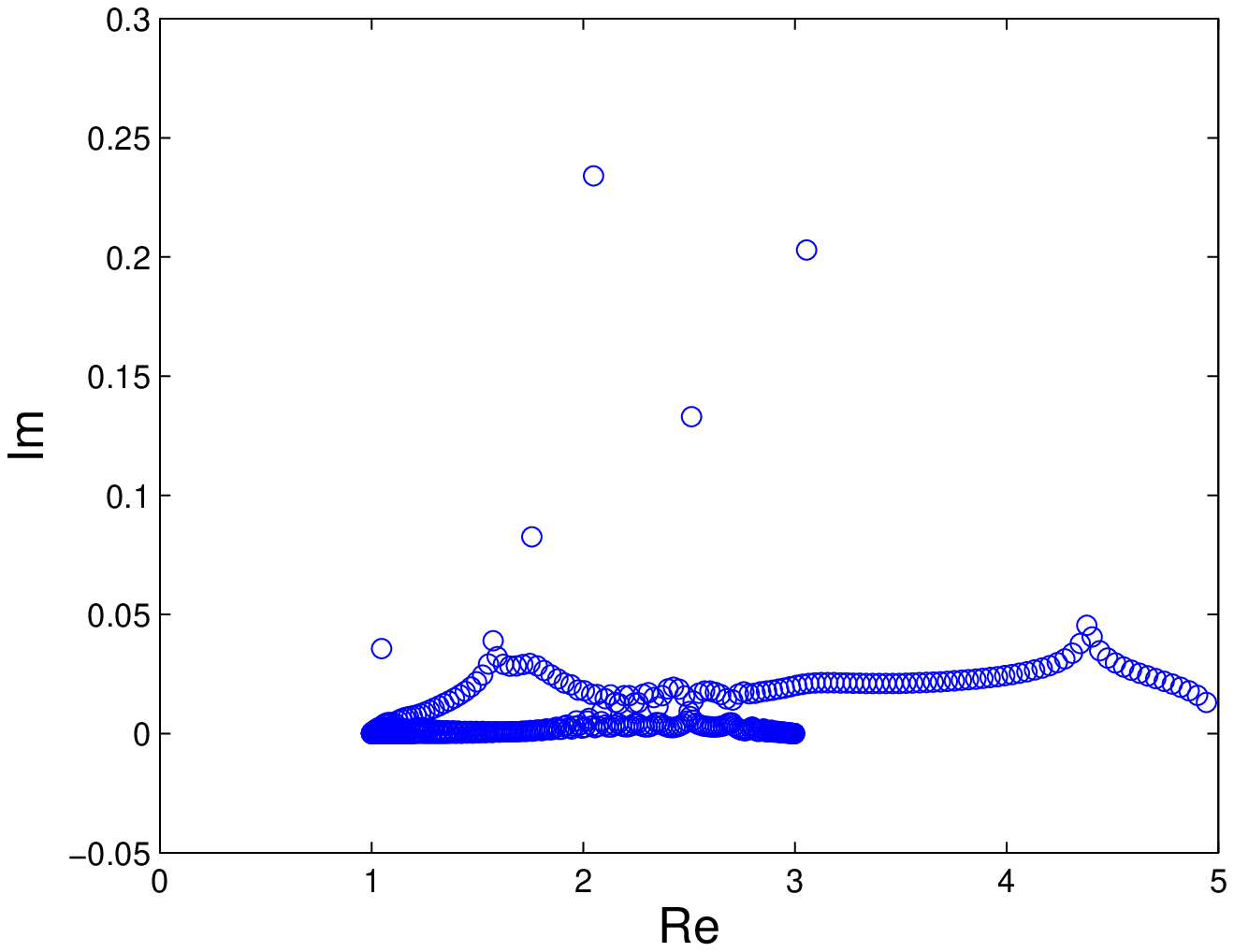}
\caption{Eigenvalues in the complex plane of $T_{200}^{-1}(g_3)T_{200}(f_3)$}\label{caso1eigpretilde}
\end{minipage}
\end{figure}

\begin{Case}\label{Case 3}
Let us choose $A^{(3)}(x)$ and $B^{(3)}(x)$ as follows
\begin{equation*}
\begin{array}{cc}
A^{(3)}(x)=\left(
\begin{array}{cc}
(1-{\rm e}^{{\mathbf i}x})\left(1+x^2/\pi^2\right)  & 0 \\
0 & 2+\cos (x)
\end{array}
\right) , & B^{(3)}(x)=\left(
\begin{array}{cc}
(1-{\rm e}^{{\mathbf i}x}) & 0 \\
0 & 1
\end{array}
\right)
\end{array}
\end{equation*}
and define
\begin{align*}
f_{3}(x) &=Q(x)A^{(3)}(x)Q(x)^{T} \\
g_{3}(x) &=Q(x)B^{(3)}(x)Q(x)^{T}.
\end{align*}
\begin{figure}[htb]
\centering
\begin{minipage}[l]{.30\textwidth}
\includegraphics[width=5cm, height=4cm]{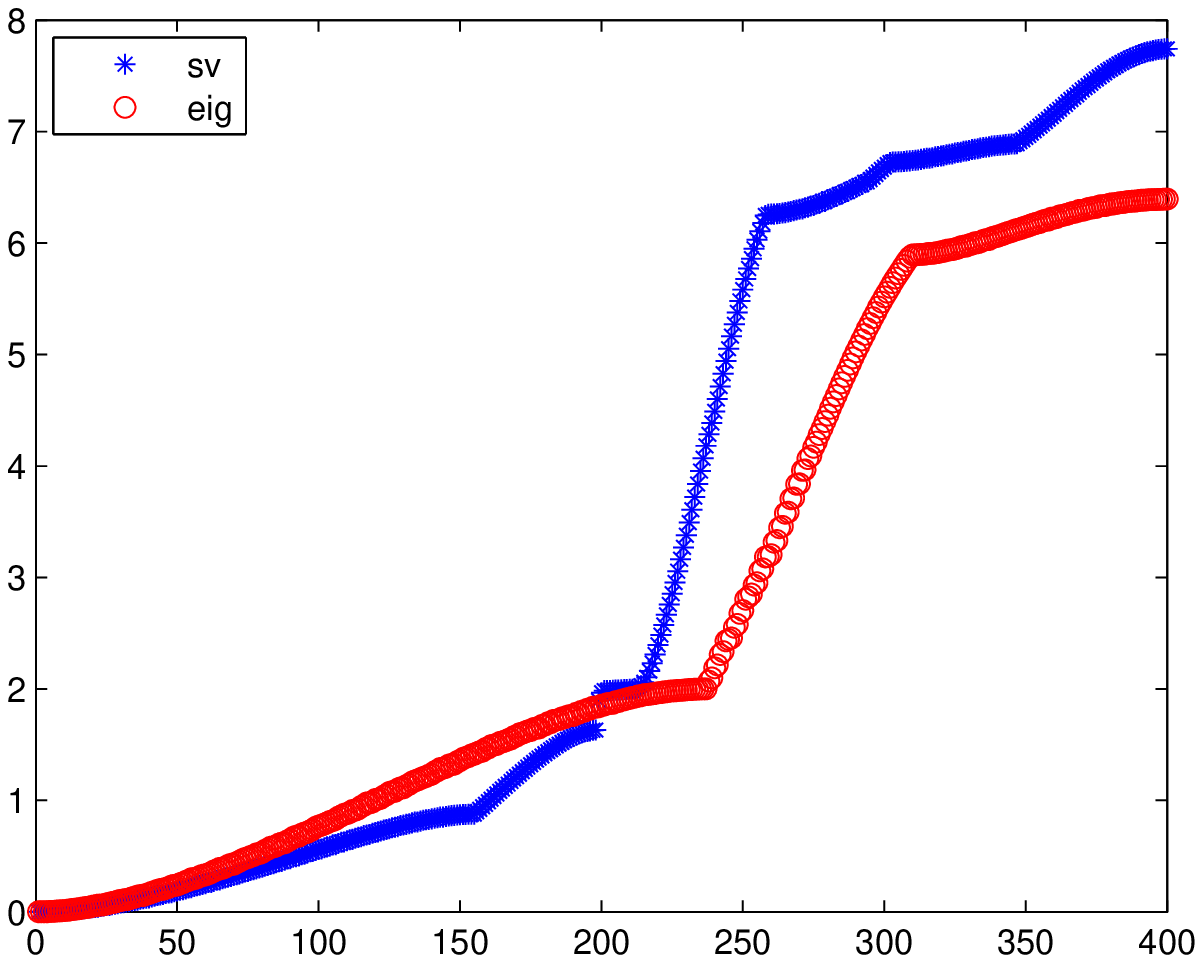}
\caption{Singular values and moduli of the eigenvalues
of $T_{200}(f_4)$}\label{caso2sv}
\end{minipage}
\hspace{10mm}
\begin{minipage}[l]{.30\textwidth}
\includegraphics[width=5cm, height=4cm]{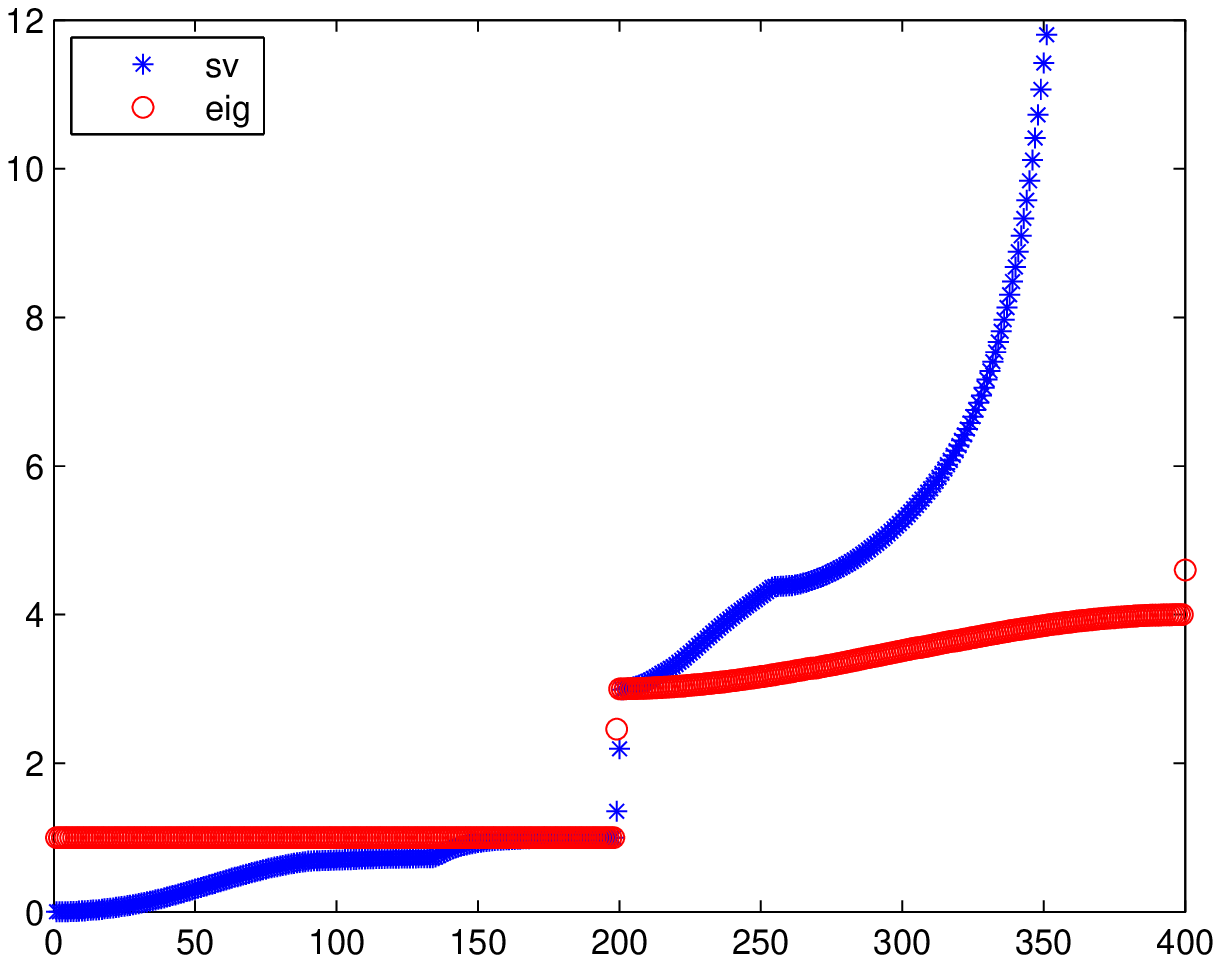}
\caption{Singular values and moduli of the eigenvalues of $T_{200}^{-1}(g_4)T_{200}(f_4)$}\label{caso2svpre}
\end{minipage}
\\
\begin{minipage}[l]{.30\textwidth}
\includegraphics[width=5cm, height=4cm]{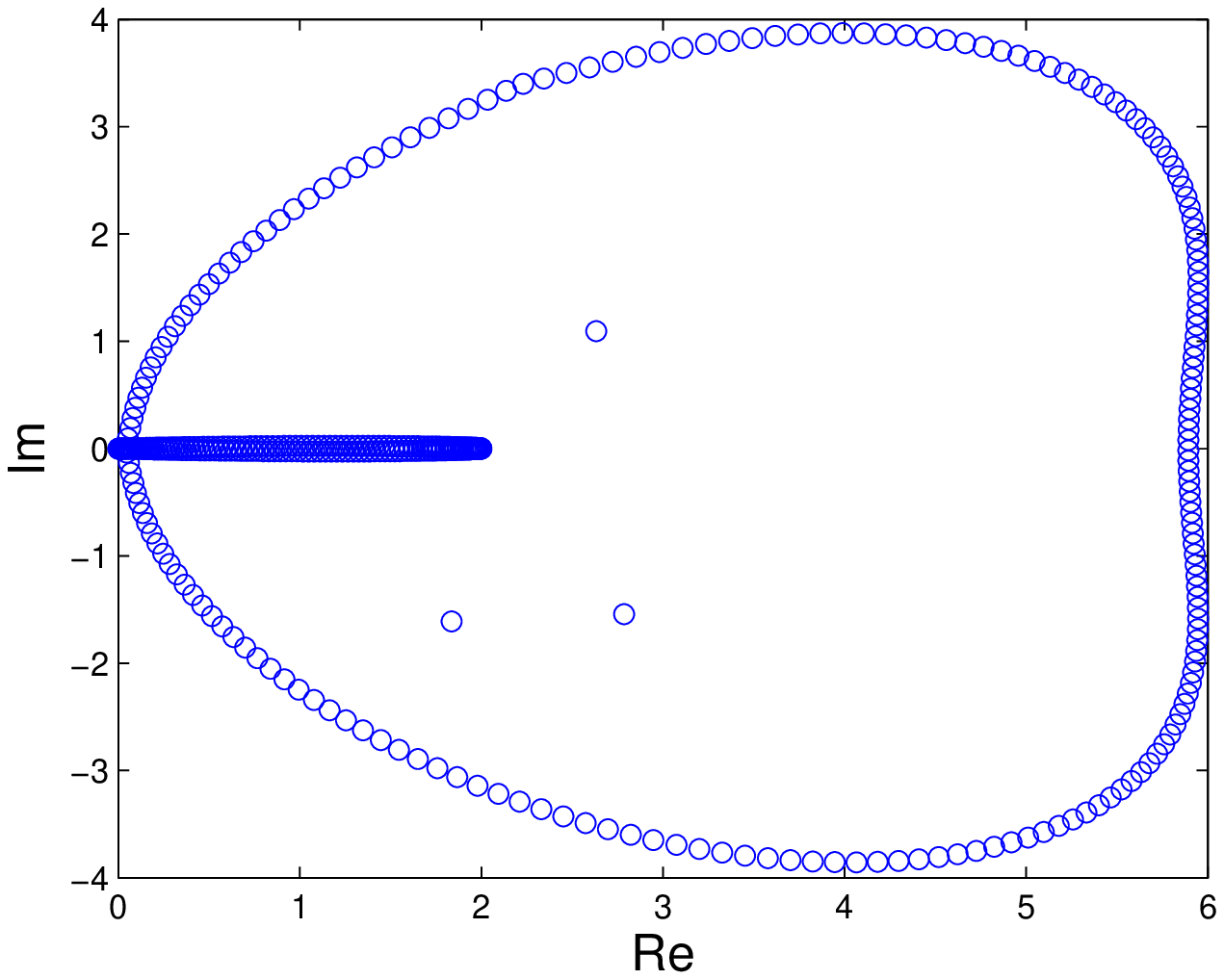}
\caption{Eigenvalues in the complex plane
of $T_{200}(f_4)$}\label{caso2eig}
\end{minipage}
\hspace{10mm}
\begin{minipage}[l]{.30\textwidth}
\includegraphics[width=5cm, height=4cm]{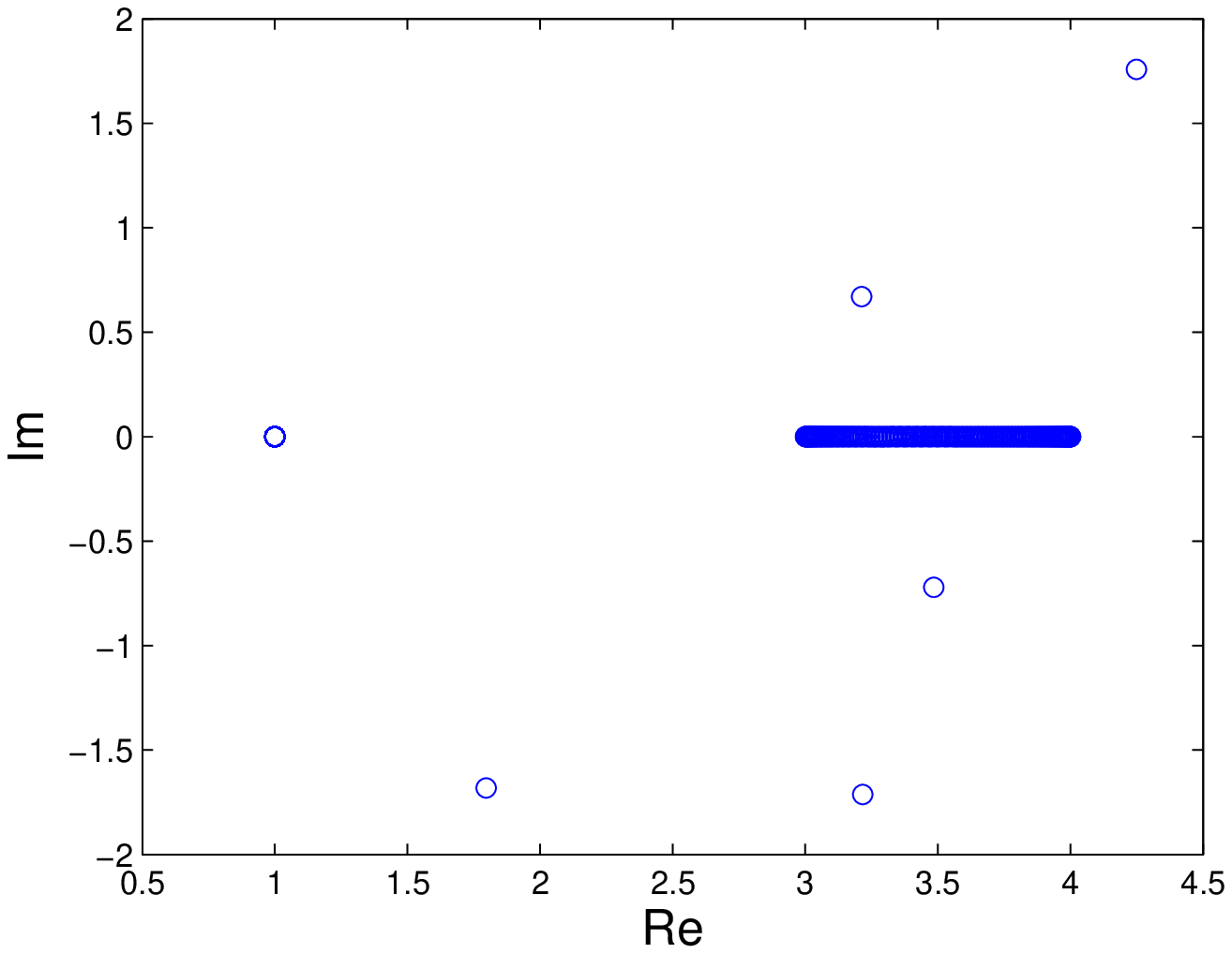}
\caption{Eigenvalues in the complex plane of $T_{200}^{-1}(g_4)T_{200}(f_4)$}\label{caso2eigpre}
\end{minipage}
\end{figure}

This case can be obtained from Example \ref{Esempio} selecting $\varphi_1(x)=1+x^2/\pi^2$ and $\varphi_2(x)=2+\cos(x)$.
Figures \ref{caso1svtilde} and \ref{caso1svpretilde} show the singular values and the moduli of the eigenvalues of $T_{200}(f_3)$ and $T_{200}^{-1}(g_3)T_{200}(f_3)$, respectively. Let us observe that the singular values and the moduli of the eigenvalues are, for both matrices, almost superposed. Figures \ref{caso1eigtilde} and \ref{caso1eigpretilde} refer to the eigenvalues in the complex plane of the same
matrices. As already argued for Case \ref{Case 1}, even in this case the eigenvalues of $T_{200}(f_3)$ show two different behaviors: a half of the eigenvalues is clustered at $[1,3]$, which is the range of the function $\lambda_2(f_3(x))=A_{2,2}^{(3)}(x)$, the others mimic $\lambda_1(f_3(x))=A_{1,1}^{(3)}(x)$, drawing a circle passing near to $0$. The closeness of the eigenvalues to $0$ is responsible of the non-optimality of the GMRES method when we solve a linear system with matrix $T_{200}(f_3)$.
\begin{table}[htb]
\centering
\begin{minipage}[l]{.30\textwidth}
\centering
\begin{tabular}{c|cc}
& \multicolumn{2}{|c}{Iterations} \\
$n$ & No Prec. & Prec. \\ \hline
50 & 59 & 15 \\
100 & 106 & 16 \\
200 & 192 & 16 \\
400 & 343 & 16
\end{tabular}
\caption{Number of GMRES iterations for $T_{n}(f_{3})$ and $T^{-1}_{n}(g_3)T_{n}(f_{3})$ varying $n$}
\label{tab1}
\end{minipage}
\hspace{5mm}
\begin{minipage}[l]{.30\textwidth}
\centering
\begin{tabular}{c|cc}
& \multicolumn{2}{|c}{Iterations} \\
$n$ & No Prec. & Prec. \\ \hline
50 & 100 & 9 \\
100 & 200 & 9 \\
200 & 338 & 9 \\
400 & 577 & 9
\end{tabular}
\caption{Number of GMRES iterations for $T_{n}(f_{4})$ and $T_{n}^{-1}(g_{4})T_{n}(f_{4})$ varying $n$}
\label{tab2}
\end{minipage}
\end{table}
Indeed, as can be observed in Table \ref{tab1}, the number of GMRES iterations required to reach tolerance $10^{-6}$ increases with $n$ for $T_{n}(f_{3})$. The preconditioned matrix $T^{-1}_{n}(g_3)T_n(f_3)$ has eigenvalues far from $0$ and bounded in modulus (see Figure \ref{caso1eigpretilde}) and so the preconditioned GMRES converges with a constant number of iterations (see Table \ref{tab1}).
This example is not covered by the theory explained in previous sections, since Coh$[{\cal ENR}(g_3)]$ includes the complex zero. The numerical tests, however, show that there is room for improving the theory, by allowing the symbol of the preconditioner to have eigenvalues assuming zero value.
\end{Case}

\begin{Case}\label{Case 4}
Let us choose $A^{(4)}(x)$ and $B^{(4)}(x)$ as follows
\begin{equation*}
\begin{array}{cc}
A^{(4)}(x)=\left(
\begin{array}{cc}
(1-{\rm e}^{{\mathbf i}x})\left( \sin ^{2}(x)+3\right)  & 0 \\
x & 1+\cos (x)
\end{array}
\right) , & B^{(4)}(x)=\left(
\begin{array}{cc}
(1-{\rm e}^{{\mathbf i}x})  & 0 \\
0 & 1+\cos (x)
\end{array}
\right)
\end{array}
\end{equation*}
and define
\begin{align*}
f_{4}(x) &=Q(x)A^{(4)}(x)Q(x)^{T} \\
g_{4}(x) &=Q(x)B^{(4)}(x)Q(x)^{T}.
\end{align*}
Figures \ref{caso2sv} and \ref{caso2svpre} show the singular values and the
moduli of the eigenvalues of $T_{200}(f_{4})$ and $T_{200}^{-1}(g_{4})T_{200}(f_{4})$, respectively. For a better resolution, in Figure \ref{caso2svpre} some singular values of order about $10^{-4}$ have been cut. Figures \ref{caso2eig} and
\ref{caso2eigpre} refer to the eigenvalues in the complex plane of the same
matrices. The reasoning regarding the behavior of the eigenvalues applies as in Case \ref{Case 3}.
Table \ref{tab2} shows the number of GMRES iterations required to reach the prescribed tolerance varying $n$. Again the number of iterations increases with $n$ for $T_{n}(f_{4})$, while in the preconditioned case the related iteration count
remains constant. For the same reason of the previous example, even in this case Theorem \ref{szego-nonherm-prec} does not apply, but again the numerical results give us hope for improving our tools.
\end{Case}
\begin{figure}[htb]
\centering
\begin{minipage}[l]{.30\textwidth}
\includegraphics[width=5cm, height=4cm]{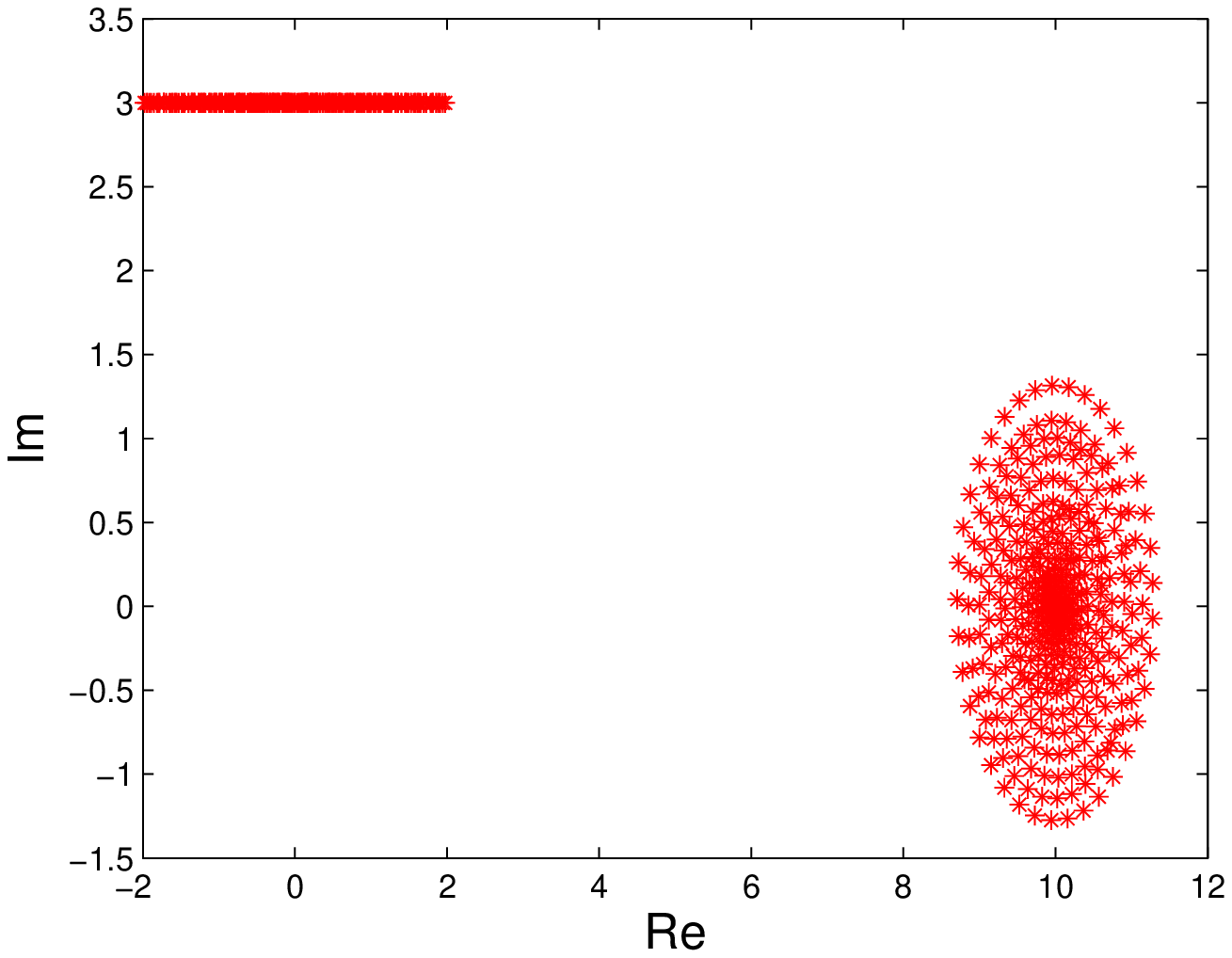}
\caption{Eigenvalues in the complex plane of $T_{n}(A^{(5)})$ with $n=(20,20)$}\label{eigA5}
\end{minipage}
\hspace{10mm}
\begin{minipage}[l]{.30\textwidth}
\includegraphics[width=5cm, height=4cm]{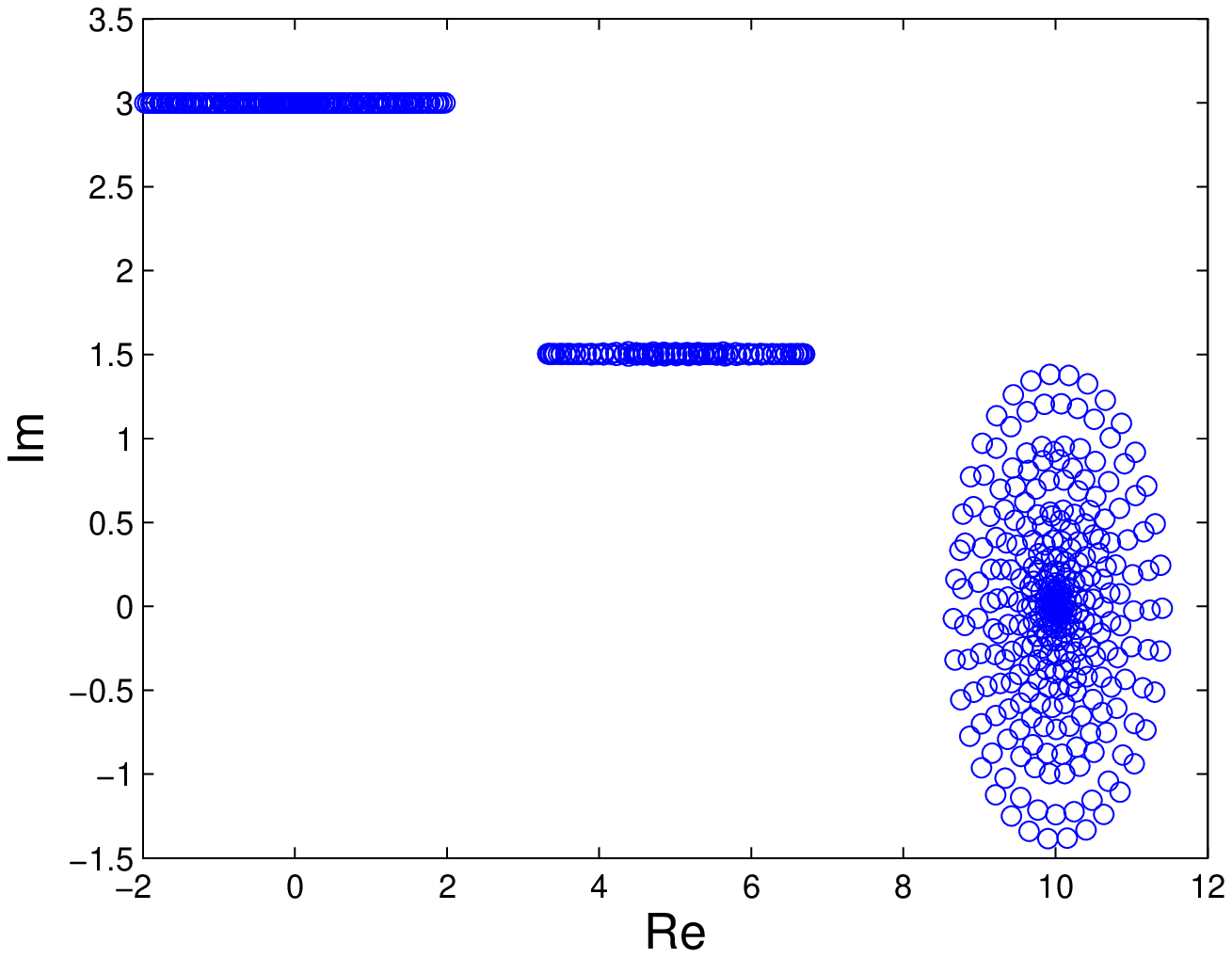}
\caption{Eigenvalues in the complex plane of $T_{n}(f_5)$ with $n=(20,20)$}\label{eigf5}
\end{minipage}
\end{figure}
\begin{figure}[htb]
\centering
\begin{minipage}[l]{.30\textwidth}
\includegraphics[width=5cm, height=4cm]{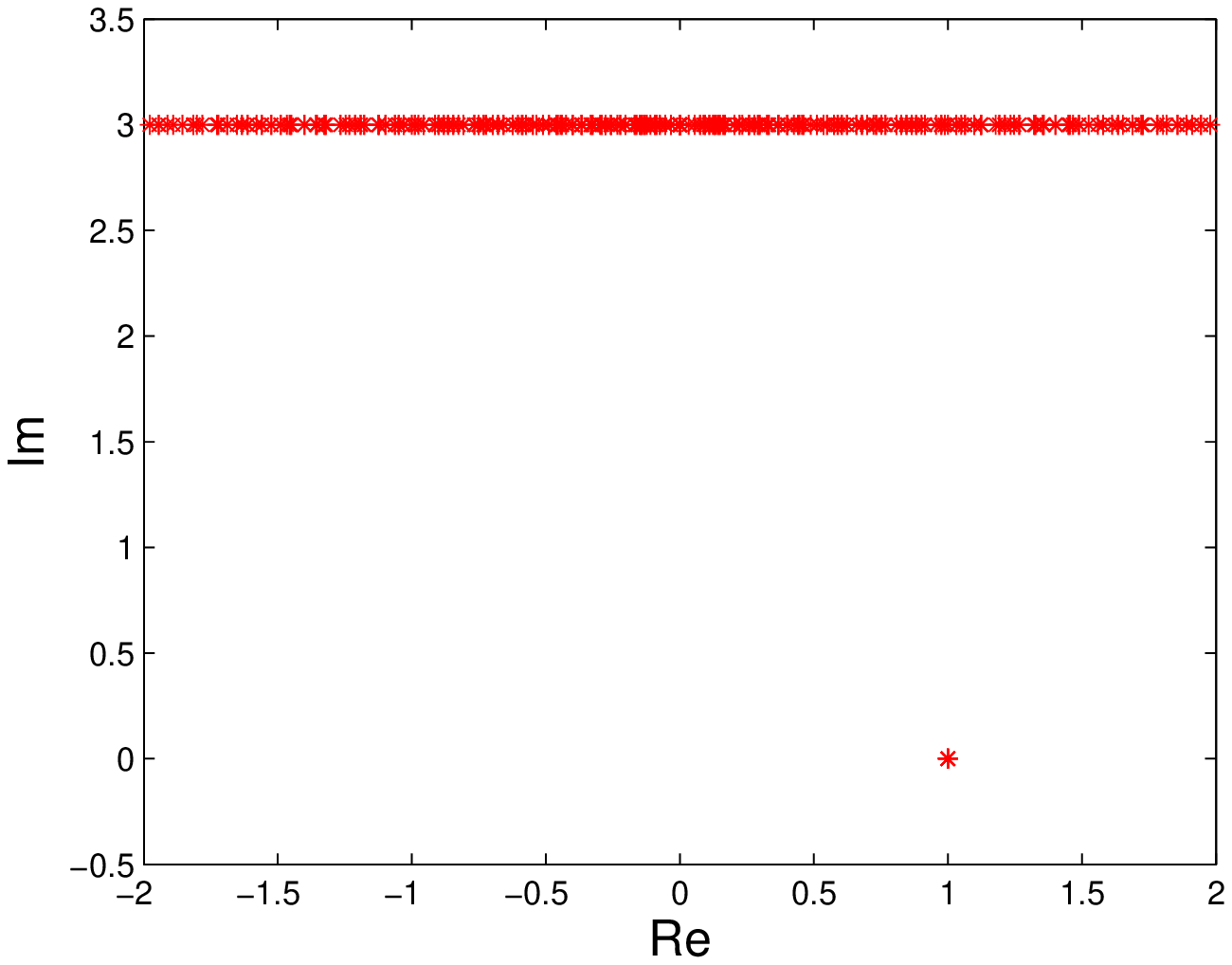}
\caption{Eigenvalues in the complex plane of $T^{-1}_{n}(B^{(5)})T_{n}(A^{(5)})$ with $n=(20,20)$}\label{eigA5pre}
\end{minipage}
\hspace{10mm}
\begin{minipage}[l]{.30\textwidth}
\includegraphics[width=5cm, height=4cm]{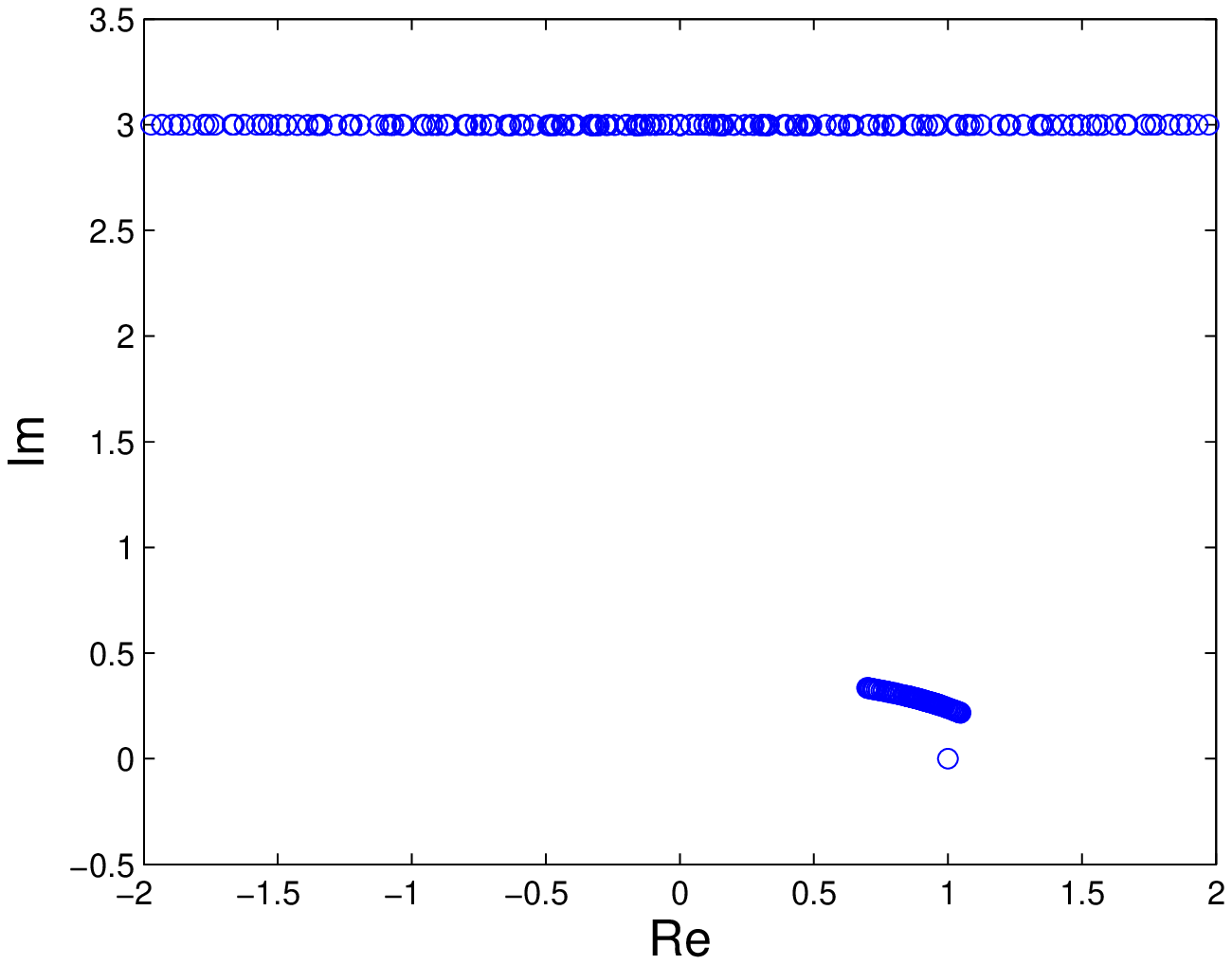}
\caption{Eigenvalues in the complex plane of $T^{-1}_{n}(g_5)T_{n}(f_5)$ with $n=(20,20)$}\label{eigf5pre}
\end{minipage}
\end{figure}
\begin{table}[htb]
\centering
\begin{tabular}{cc|cc}
& &\multicolumn{2}{|c}{Counting the outliers} \\
$n_1$ & $n_2$ & Out. & Out./$\sqrt{\widehat n}$ \\ \hline
5 & 5 & 32 & 6.40\\
10 & 10 & 72 & 7.20\\
15 & 15 & 112 & 7.47\\
20 & 20 & 152 & 7.60\\
25 & 25 & 192 & 7.68\\
30 & 30 & 232 & 7.73
\end{tabular}
\caption{Number of outliers for both $T_{n}(f_{5})$ and $T^{-1}_{n}(g_5)T_{n}(f_{5})$ varying $n_1$ and $n_2$
}
\label{outliers}
\end{table}
\begin{figure}[htb]
\centering
\begin{minipage}[l]{.30\textwidth}
\includegraphics[width=5cm, height=4cm]{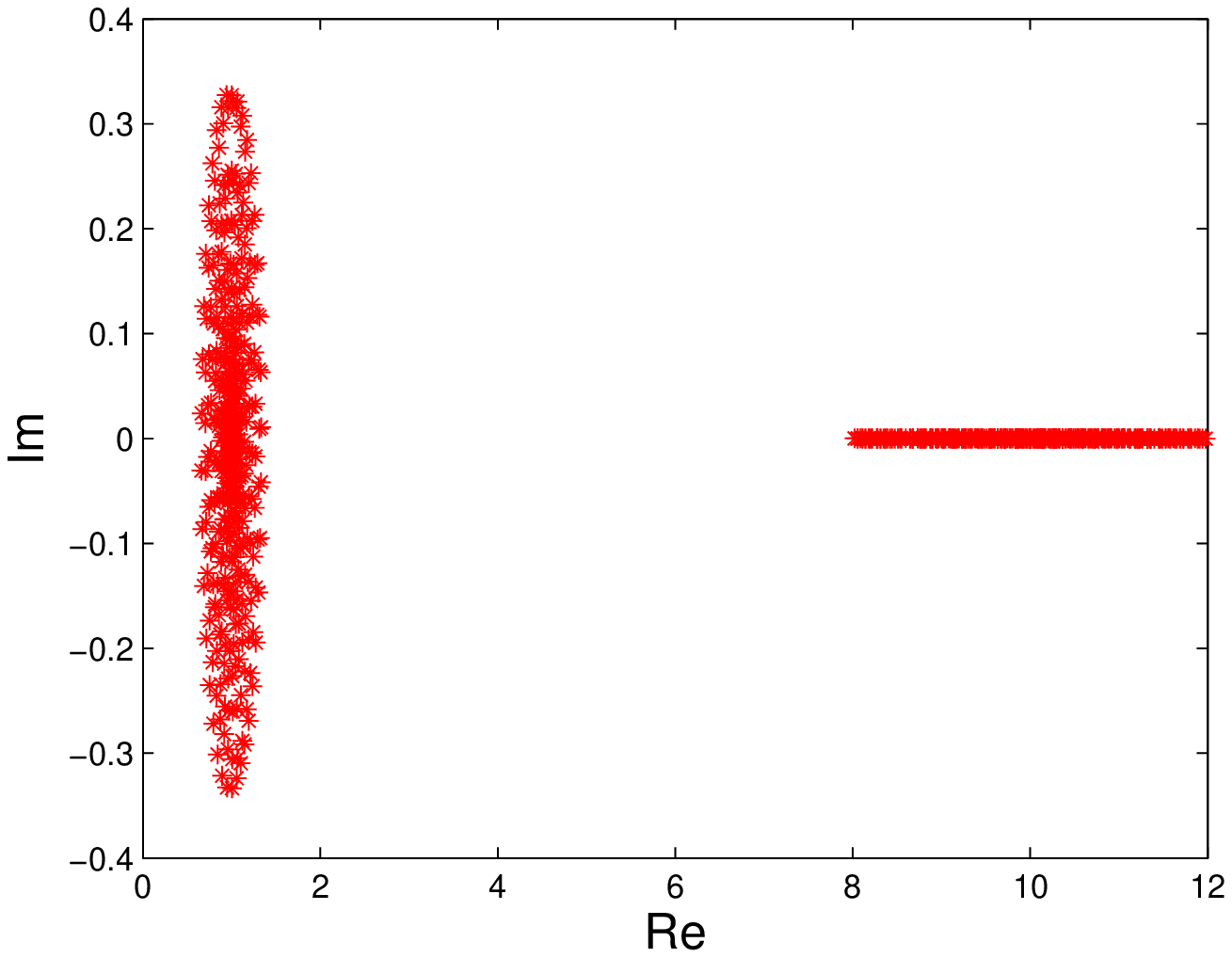}
\caption{Eigenvalues in the complex plane of $T_{n}(A^{(6)})$ with $n=(20,20)$}\label{eigA6}
\end{minipage}
\hspace{10mm}
\begin{minipage}[l]{.30\textwidth}
\includegraphics[width=5cm, height=4cm]{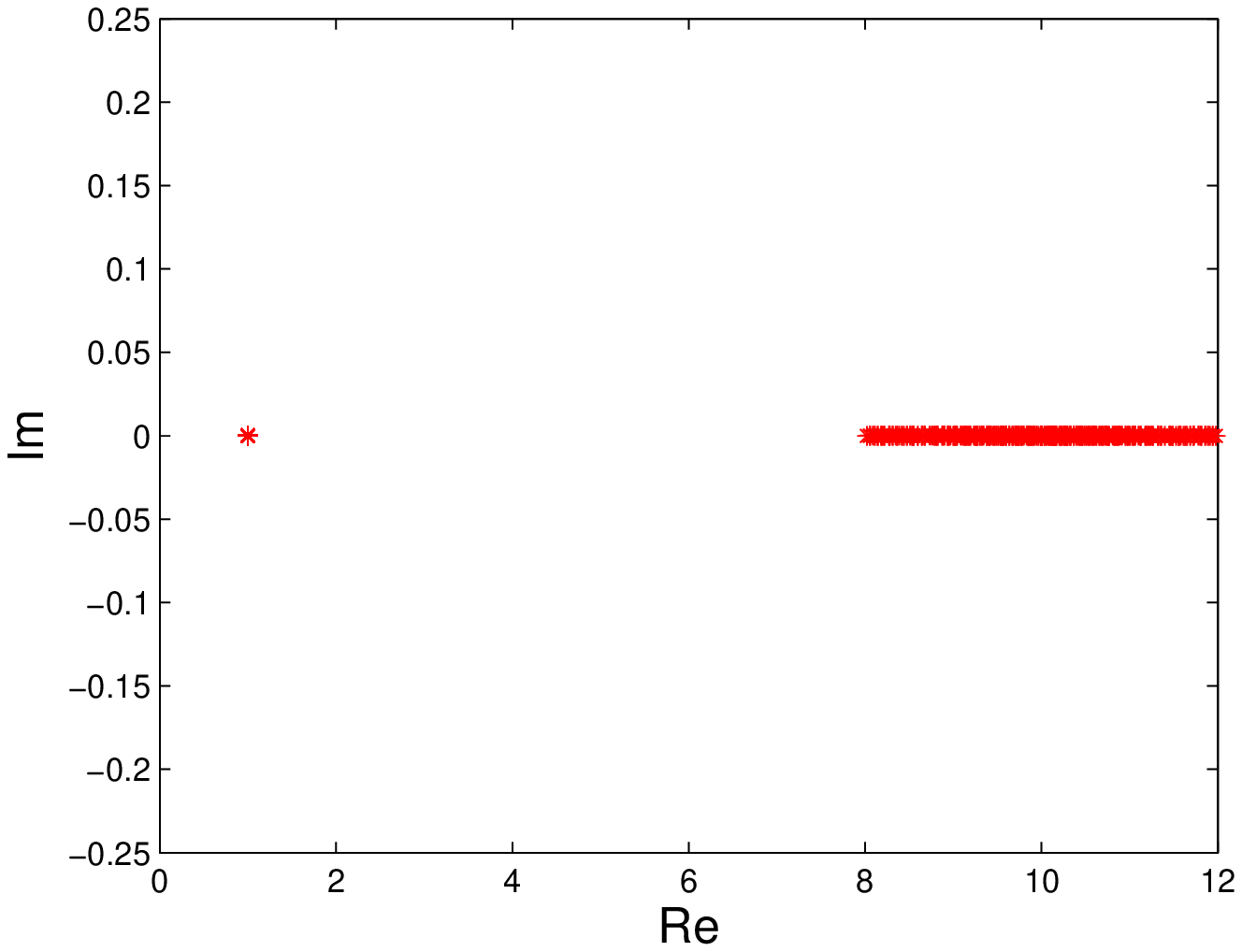}
\caption{Eigenvalues in the complex plane of $T^{-1}_{n}(B^{(6)})T_{n}(A^{(6)})$ with $n=(20,20)$}\label{eigA6pre}
\end{minipage}
\end{figure}

\subsection{2-level examples}

In this section we fix $s=2$ and $k=2$, that is we consider ${\cal M}_2$-valued symbols of $2$ variables. In particular, we extend the definitions of $f$ and $g$ given in (\ref{fg}) taking $x=(x_1,x_2)$ and
\begin{equation*}
Q(x)=\left(
\begin{array}{cc}
\cos (x_1+x_2) & \sin(x_1+x_2) \\
-\sin (x_1+x_2) & \cos(x_1+x_2)
\end{array}
\right).
\end{equation*}
From here onwards, $n$ is a $2$-index, that is of type $n=(n_1,n_2)$.

\begin{Case}\label{Case 5}
This case can be seen as a 2-level extension of Case \ref{Case 1} obtained by choosing
\begin{equation*}
\begin{array}{cc}
A^{(5)}(x)=\left(
\begin{array}{cc}
3{\mathbf i}+\cos(x_1)+\cos(x_2) & 0 \\
0 &10+2({\rm e}^{{\mathbf i}x_1}+{\rm e}^{{\mathbf i}x_2})
\end{array}
\right) , & B^{(5)}(x)=\left(
\begin{array}{cc}
1 & 0 \\
0 & 10+2({\rm e}^{{\mathbf i}x_1}+{\rm e}^{{\mathbf i}x_2})
\end{array}
\right)
\end{array}
\end{equation*}
and defining
\begin{align*}
f_{5}(x) &=Q(x)A^{(5)}(x)Q(x)^{T} \\
g_{5}(x) &=Q(x)B^{(5)}(x)Q(x)^{T}.
\end{align*}
Let us observe that $f_5$ and $A^{(5)}$ are similar via the unitary transformation $Q(x)$, then, according to the theory, the associated 2-level block Toeplitz matrices are distributed in the sense of the eigenvalues in the same way. As shown in Figures \ref{eigA5} and \ref{eigf5}, in which $n=(n_1,n_2)=(20,20)$, both the eigenvalues of $T_{n}(f_{5})$ and $T_{n}(A^{(5)})$ are divided in two sub-clusters, one at the range of $A^{(5)}_{1,1}(x)=\lambda_1(f_5(x))$, the other at the range of $A^{(5)}_{2,2}(x)=\lambda_2(f_5(x))$. Interestingly enough, for $T_{n}(A^{(5)})$ the clusters are of strong type, while in the case of $T_{n}(f_{5})$ the spectrum presents outliers with real part in $(3,7)$ and imaginary part equal to $1.5$. Table \ref{outliers} shows that the number of outliers seems to behave as $o(\widehat n)$ or, more specifically, as $O(\sqrt{ \widehat n})$ (notice that this estimate is in line with the analysis in \cite{tyrty1}). Analogous results are obtained in the comparison between $T^{-1}_{n}(g_{5})T_{n}(f_{5})$ and $T^{-1}_{n}(B^{(5)})T_{n}(A^{(5)})$, as shown in Figures \ref{eigA5pre} and \ref{eigf5pre}. Refer again to Table \ref{outliers} for the number of outliers of $T_n^{-1}(g_5)T_n(f_5)$ varying $n_1$ and $n_2$ (this number is exactly the same as the number of outliers of $T_n(f_5)$).
The eigenvalues of the symbol $f_5$ have no zeros, so the number of GMRES iterations required to reach tolerance $10^{-6}$ in solving the system associated to $T_n(f_5)$ is optimal, that is it does not depend on $n$. However, as shown in Table \ref{tab5}, when preconditioning with $T_n(g_5)$, we preserve the optimality with a smaller number of iterations.
\end{Case}
\begin{figure}[htb]
\centering
\begin{minipage}[l]{.30\textwidth}
\includegraphics[width=5cm, height=4cm]{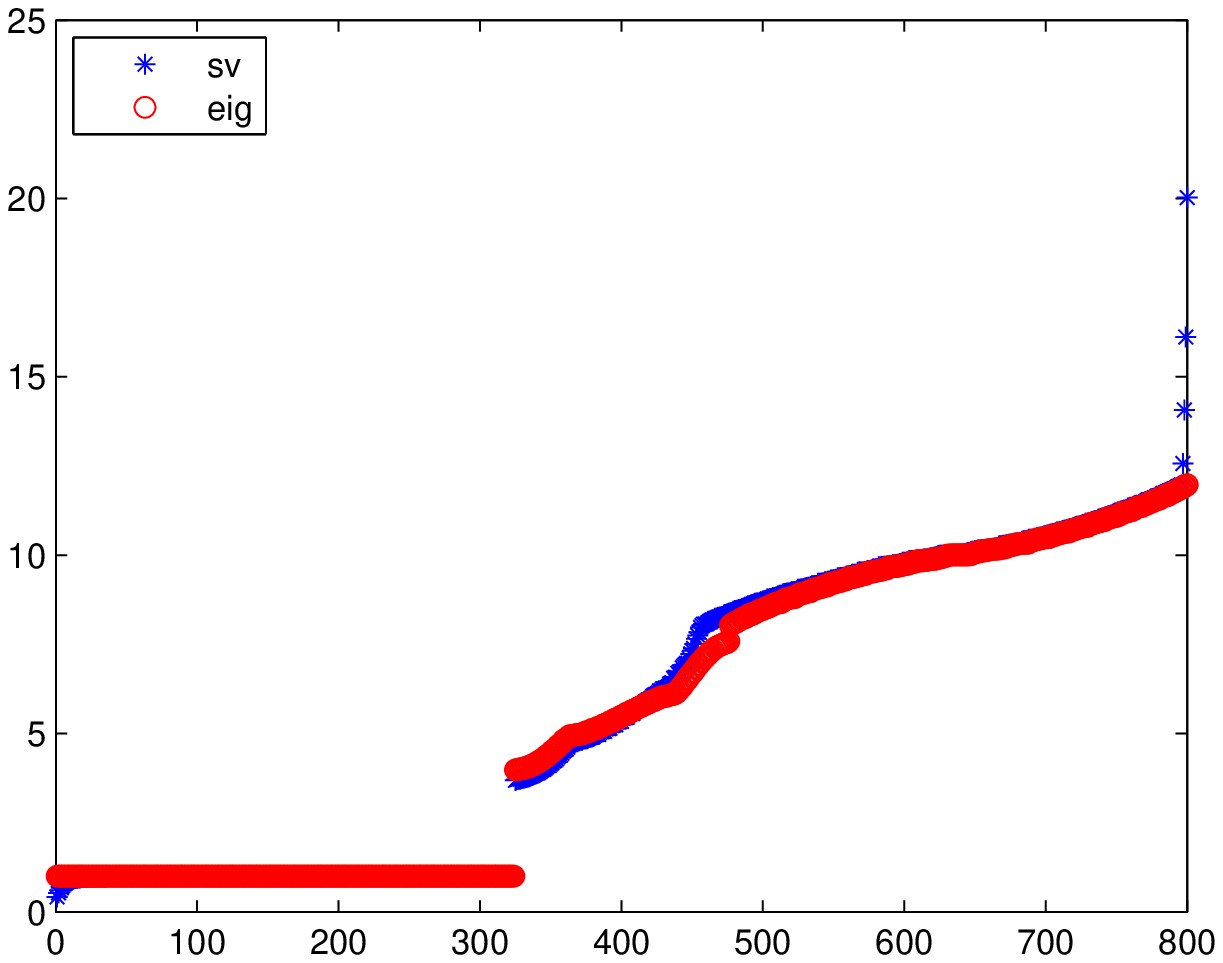}
\caption{Singular values and moduli of the eigenvalues of $T_{n}(f_6)$ with $n=(20,20)$}\label{caso6sv}
\end{minipage}
\hspace{10mm}
\begin{minipage}[l]{.30\textwidth}
\includegraphics[width=5cm, height=4cm]{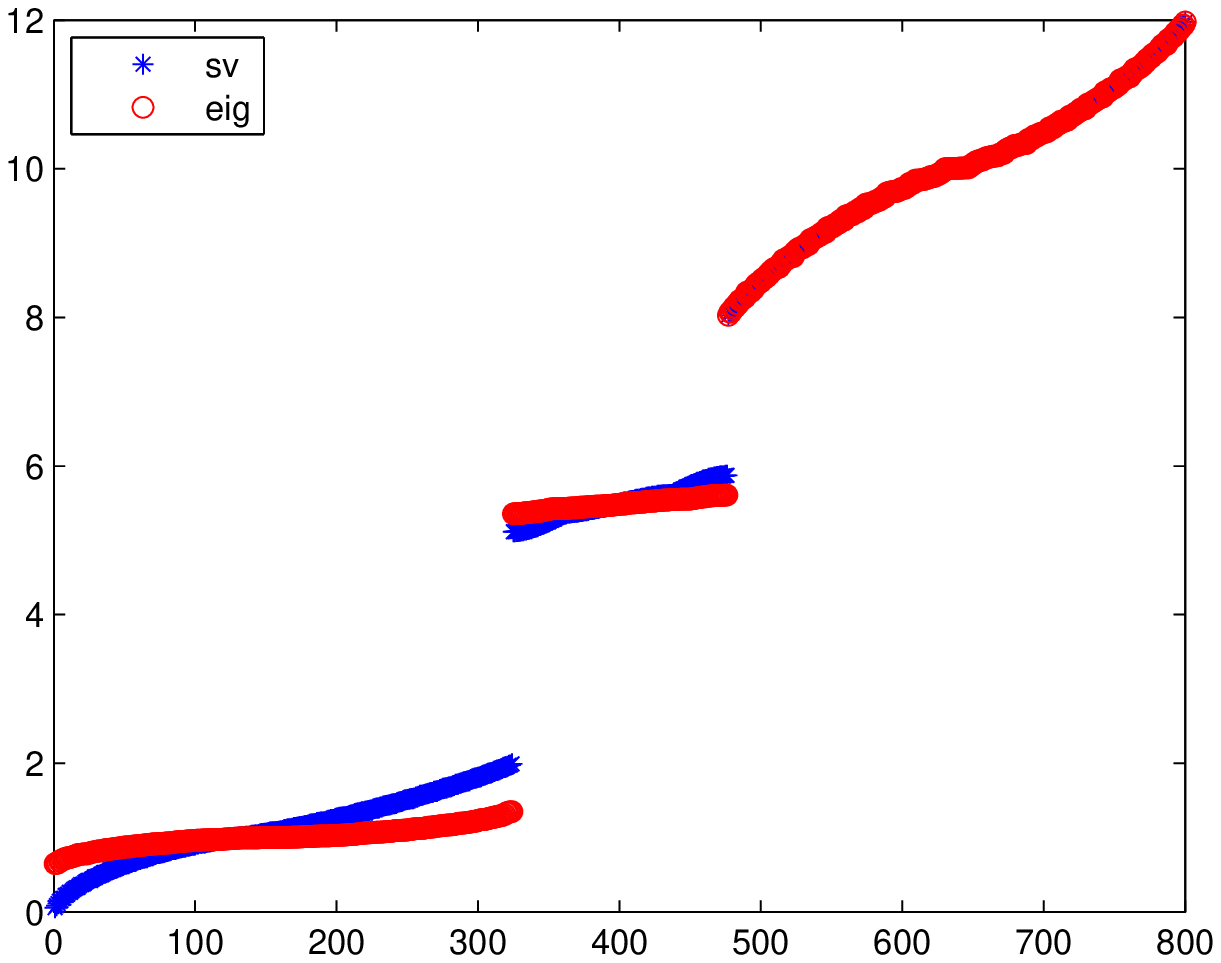}
\caption{Singular values and moduli of the eigenvalues of $T_{n}^{-1}(g_6)T_{n}(f_6)$ with $n=(20,20)$}\label{caso6svpre}
\end{minipage}
\\
\begin{minipage}[l]{.30\textwidth}
\includegraphics[width=5cm, height=4cm]{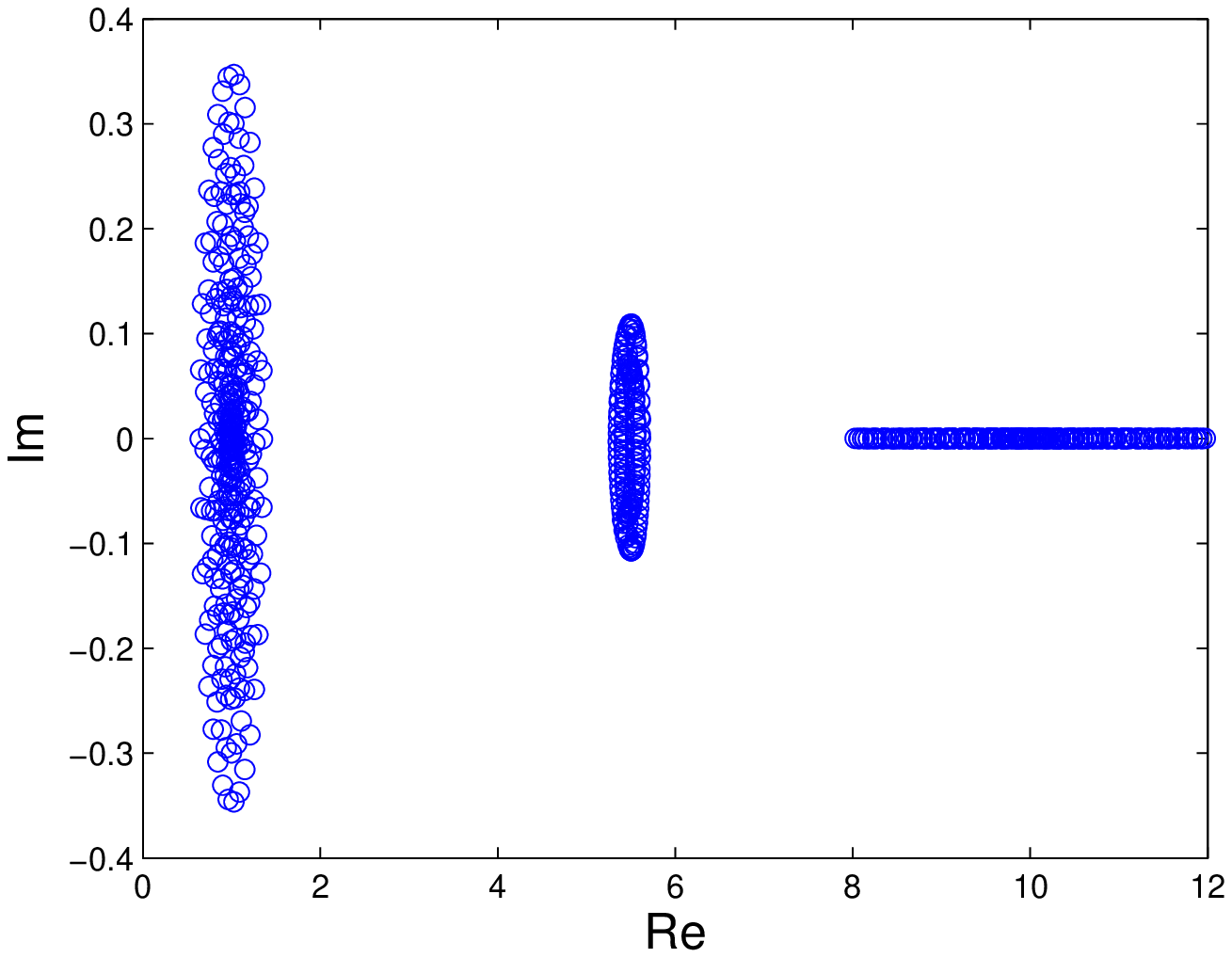}
\caption{Eigenvalues in the complex plane of $T_{n}(f_6)$ with $n=(20,20)$}\label{caso6eig}
\end{minipage}
\hspace{10mm}
\begin{minipage}[l]{.30\textwidth}
\includegraphics[width=5cm, height=4cm]{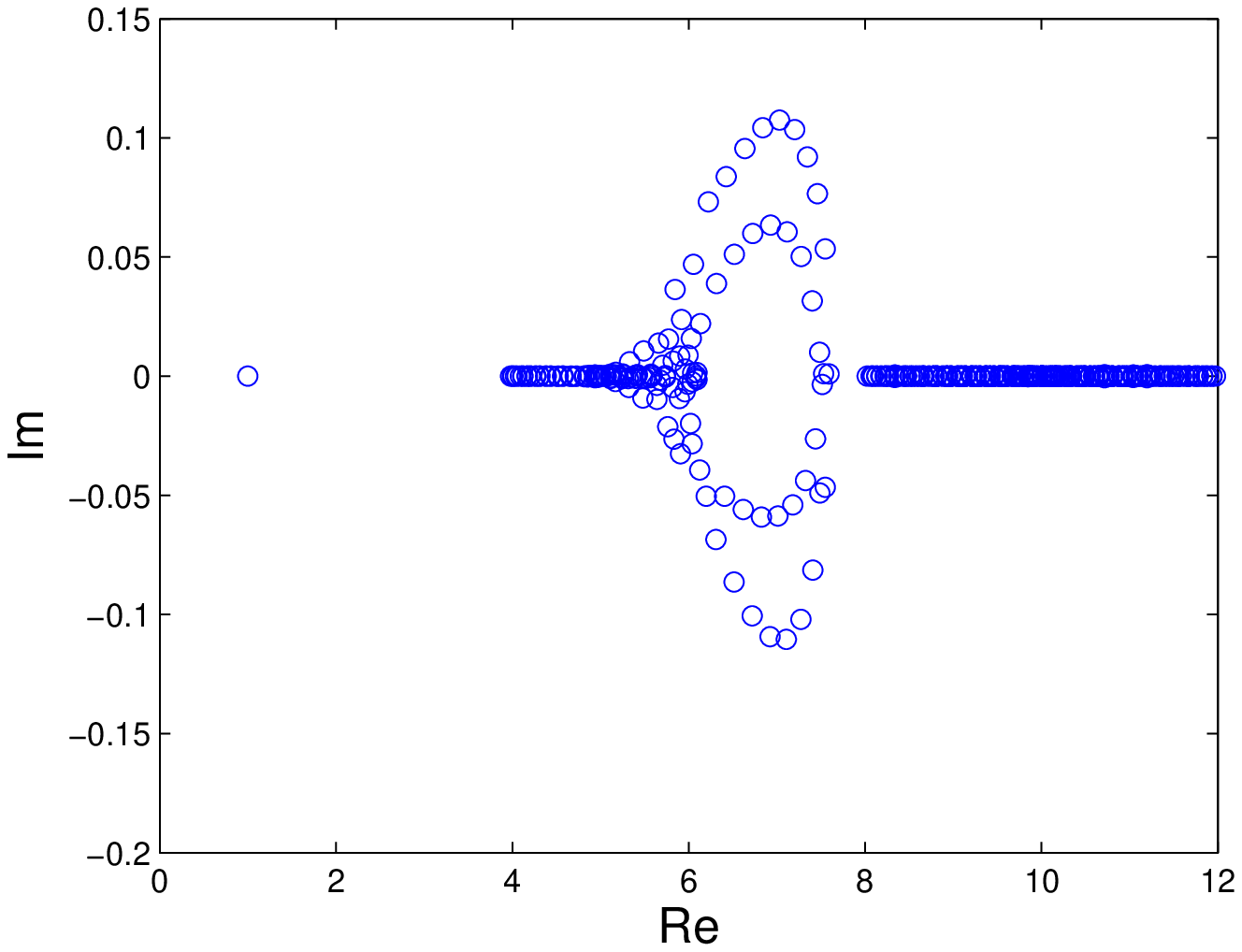}
\caption{Eigenvalues in the complex plane of $T_{n}^{-1}(g_6)T_{n}(f_6)$ with $n=(20,20)$}\label{caso6eigpre}
\end{minipage}
\end{figure}
\begin{table}[htb]
\centering
\begin{minipage}[l]{.30\textwidth}
\centering
\begin{tabular}{cc|cc}
& &\multicolumn{2}{|c}{Iterations} \\
$n_1$ & $n_2$ & No Prec. & Prec. \\ \hline
5 & 5 & 21 & 15 \\
10 & 10 & 33 & 24 \\
15 & 15 & 39 & 24 \\
20 & 20 & 42 & 25
\end{tabular}
\caption{Number of GMRES iterations for $T_{n}(f_{5})$ and $T_{n}^{-1}(g_{5})T_{n}(f_{5})$ varying $n_1$ and $n_2$}
\label{tab5}
\end{minipage}
\hspace{5mm}
\begin{minipage}[l]{.30\textwidth}
\centering
\begin{tabular}{cc|cc}
& & \multicolumn{2}{|c}{Iterations} \\
$n_1$ & $n_2$ & No Prec. & Prec. \\ \hline
5 & 5 & 17 & 14 \\
10 & 10 &39 & 16 \\
15 & 15 & 61 & 15 \\
20 & 20 & 80 & 16
\end{tabular}
\caption{Number of GMRES iterations for $T_{n}(f_{6})$ and $T_{n}^{-1}(g_{6})T_{n}(f_{6})$ varying $n_1$ and $n_2$}\label{tab6}
\end{minipage}
\end{table}

\begin{Case}\label{Case 6}
Let us choose $A^{(6)}$ and $B^{(6)}$ as
\begin{equation*}
\begin{array}{cc}
A^{(6)}(x)=\left(
\begin{array}{cc}
1-({\rm e}^{{\mathbf i}x_1}+{\rm e}^{{\mathbf i}x_2})/2 & 0 \\
0 & 10+\cos(x_1)+\cos(x_2)
\end{array}
\right) , & B^{(6)}(x)=\left(
\begin{array}{cc}
1-({\rm e}^{{\mathbf i}x_1}+{\rm e}^{{\mathbf i}x_2})/2 & 0 \\
0 & 1
\end{array}
\right)
\end{array}
\end{equation*}
and define
\begin{align*}
f_{6}(x) &=Q(x)A^{(6)}(x)Q(x)^{T} \\
g_{6}(x) &=Q(x)B^{(6)}(x)Q(x)^{T}.
\end{align*}
The remark pointed out in the previous example about the outliers applies also in this case (compare Figures \ref{eigA6} and \ref{eigA6pre} with Figures \ref{caso6eig} and \ref{caso6eigpre}). In particular, we have found that the outliers bahave again like $O(\sqrt{\widehat n})$.
The singular values and the moduli of the eigenvalues of $T_n(f_6)$ are bounded, as shown in Figure \ref{caso6sv}. More precisely, a half of the eigenvalues of $T_n(f_6)$ is clustered at $[8,12]$, that is in the range of $A^{(6)}_{2,2}(x)=\lambda_2(f_6(x))$, the remaining part behaves as $A^{(6)}_{1,1}(x)=\lambda_1(f_6(x))$ (see Figure \ref{caso6eig}). Figures \ref{caso6svpre} and \ref{caso6eigpre} refer to the singular values and the eigenvalues of $T_n^{-1}(g_6)T_n(f_6)$. Let us observe that, although $0\in$ Coh$[{\cal ENR}(g_6)]$, the spectrum of $T_n^{-1}(g_6)T_n(f_6)$ is essentially determined by the spectrum of the function $g^{-1}_6f_6$. Table \ref{tab6} highlights once again that the GMRES with prenditioner $T_n(g_6)$ converges faster than its non-preconditioned version.
\end{Case}

In summary, we conclude that the proposed preconditioning approaches for non-Hermitian problems are numerically effective and confirm the theoretical findings. The encouraging numerical results show that there is room for improving the analysis and for providing a more complete theoretical picture, especially concerning the spectral localization and the number of outliers.


\begin{thebibliography}{00}

{\footnotesize

\bibitem{AxB}
\textsc{Axelsson O., Barker V.\,A.}
\textit{Finite element solution of boundary value problems: theory and computation.}
Society for Industrial and Applied Mathematics (2001).

\bibitem{AxL}
\textsc{Axelsson O., Lindsk\"og G.}
\textit{On the rate of convergence of the preconditioned conjugate gradient method.}
Numerische Mathematik \textbf{48} (1986) pp. 499--523.

\bibitem{AxN}
\textsc{Axelsson O., Neytcheva M.}
\textit{The algebraic multilevel iteration methods -- theory and applications.}
pp. 13–--23 of the book \emph{Proceedings of the 2nd International Colloquium on Numerical Analysis, Plovdiv, Bulgaria (August 1993)} by D.\,D. Bainov and V. Covachev.



\bibitem{Bhatia}
\textsc{Bhatia R.}
\textit{Matrix analysis.}
Springer-Verlag New York (1997).

\bibitem{Brezis}
\textsc{Brezis H.}
\textit{Functional analysis, Sobolev spaces and partial differential equations.}
Springer (2011).


\bibitem{inpaint}
\textsc{Cai J.\,F., Chan R.\,H., Shen Z.\,W.}
\textit{A framelet-based image inpainting algorithm.}
Applied and Computational Harmonic Analysis {\bf24} (2008) pp. 131--149.

\bibitem{cband1}
\textsc{Chan R.\,H.}
\textit{Toeplitz preconditioners for Toeplitz systems with nonnegative generating functions.}
IMA Journal of Numerical Analysis {\bf11} (1991) pp. 333--345.

\bibitem{CChi}
\textsc{Chan R.\,H., Ching  W.\,K.}
\textit{Toeplitz-circulant preconditioners for Toeplitz systems and their applications to queueing networks with batch arrivals.}
SIAM Journal on Scientific Computing {\bf17} (1996) pp. 762--772.

\bibitem{CN}
\textsc{Chan R.\,H., Ng M.\,K.}
\textit{Conjugate gradient methods for Toeplitz systems.}
SIAM Review {\bf38} (1996) pp. 427--482.



\bibitem{cband2}
\textsc{Chan R.\,H., Tang P.\,T.\,P.}
\textit{Fast band-Toeplitz preconditioners for Hermitian Toeplitz systems.}
SIAM Journal on Scientific Computing {\bf15} (1994) pp. 164--171.

\bibitem{markov1}
\textsc{Cinlar E.}
\textit{Introduction to stochastic processes.}
Prentice-Hall (1975).

\bibitem{symbol-image}
\textsc{Del Prete V., Di Benedetto F., Donatelli M., Serra-Capizzano S.}
\textit{Symbol approach in a signal-restoration problem involving block Toeplitz matrices.}
Journal of Computational and Applied Mathematics (2013) DOI: 10.1016/j.cam.2013.05.018.


\bibitem{DFS}
\textsc{Di Benedetto F., Fiorentino G., Serra S.}
\textit{C.\,G. Preconditioning for Toeplitz matrices.}
Computers \& Mathematics with Applications {\bf25} (1993) pp. 35--45.

\bibitem{maya}
\textsc{Donatelli M., Neytcheva M., Serra-Capizzano S.}
\textit{Canonical eigenvalue distribution of multilevel block Toeplitz sequences with non-Hermitian symbols.}
Operator Theory: Advances and Applications {\bf221} (2012) pp. 269--291.

\bibitem{Golinskii-Serra}
\textsc{Golinskii L., Serra-Capizzano S.}
\emph{The asymptotic properties of the spectrum of nonsymmetrically perturbed Jacobi matrix sequences.}
Journal of Approximation Theory {\bf 144} (2007) pp. 84--102.



\bibitem{huckle-nonh}
\textsc{Huckle T., Serra-Capizzano S., Tablino Possio C.}
\textit{Preconditioning strategies for non-Hermitian Toeplitz linear systems.}
Numerical Linear Algebra with Applications {\bf12} (2005) pp. 211--220.

\bibitem{Korovkin}
\textsc{Korovkin P.\,P.}
\textit{Linear operators and approximation theory.}
Hindustan Publishing Corp (1960).


\bibitem{NRT}
\textsc{Nachtigal N.\,M., Reddy S.\,C., Trefethen L.\,N.}
\textit{How fast are nonsymmetric matrix iterations?}
SIAM Journal on Matrix Analysis and Applications {\bf13} (1992) pp. 778--795.

\bibitem{markov2}
\textsc{Neuts M.}
\textit{Structured stochastic matrices of M/G/1 type and their applications}.
Marcel Dekker Inc., New York (1989).

\bibitem{nsv}
\textsc{Noutsos D., Serra-Capizzano S., Vassalos P.}
\textit{Matrix algebra preconditioners for multilevel Toeplitz systems do not insure optimal convergence rate.}
Theoretical Computer Science {\bf315} (2004) pp. 557--579.


\bibitem{SS}
\textsc{Saad Y., Schultz M.\,H.}
\textit{GMRES: a generalized minimal residual algorithm for solving nonsymmetric linear systems.}
SIAM Journal on Scientific and Statistical Computing {\bf7} (1986) pp. 856--869.

\bibitem{Sbit}
\textsc{Serra S.}
\textit{Preconditioning strategies for asymptotically ill-conditioned block Toeplitz systems.}
BIT {\bf34} (1994) pp. 579--594.

\bibitem{Sopt1}
\textsc{Serra S.}
\textit{Optimal, quasi-optimal and superlinear band-Toeplitz preconditioners for asymptotically ill-conditioned positive definite Toeplitz systems.}
Mathematics of Computation {\bf66} (1997) pp. 651--665.

\bibitem{Smarko1}
\textsc{Serra-Capizzano S.}
\textit{Asymptotic results on the spectra of block Toeplitz preconditioned matrices.}
SIAM Journal on Matrix Analysis and Applications {\bf20} (1998) pp. 31--44.




\bibitem{KoroSINUM}
\textsc{Serra-Capizzano S.}
\textit{A Korovkin-based approximation of multilevel Toeplitz matrices (with rectangular unstructured blocks) via multilevel trigonometric matrix spaces.}
SIAM Journal on Numerical Analysis {\bf36} (1999) pp. 1831--1857.

\bibitem{Smarko2}
\textsc{Serra-Capizzano S.}
\textit{Spectral and computational analysis of block Toeplitz matrices having nonnegative definite matrix-valued generating functions.}
BIT {\bf39} (1999) pp. 152--175.

\bibitem{taud2}
\textsc{Serra-Capizzano S.}
\textit{Spectral behavior of matrix sequences and discretized boundary value problems.}
Linear Algebra and its Applications {\bf337} (2001) pp. 37--78.


\bibitem{negablo}
\textsc{Serra-Capizzano S.}
\textit{Matrix algebra preconditioners for multilevel Toeplitz matrices are not superlinear.}
Linear Algebra and its Applications {\bf343--344} (2002) pp. 303--319.

\bibitem{ssMore}
\textsc{Serra-Capizzano S.}
\textit{More inequalities and asymptotics for matrix valued linear positive operators: the noncommutative case.}
Operator Theory: Advances and Applications {\bf135} (2002) pp. 293--315.

\bibitem{glt-vs-fourier}
\textsc{Serra-Capizzano S.}
\textit{The GLT class as a generalized Fourier analysis and applications.}
Linear Algebra and its Applications {\bf419} (2006) pp. 180--233.

\bibitem{LaaSSS}
\textsc{Serra-Capizzano S., Sesana D., Strouse E.}
\textit{The eigenvalue distribution of products of Toeplitz matrices -- clustering and attraction.}
Linear Algebra and its Applications {\bf432} (2010) pp. 2658--2678.

\bibitem{jia}
\textsc{Serra-Capizzano S., Tilli P.}
\textit{Extreme singular values and eigenvalues of non-Hermitian block Toeplitz matrices.}
Journal of Computational and Applied Mathematics {\bf108} (1999) pp. 113--130.

\bibitem{tyrty1}
\textsc{Serra-Capizzano S., Tyrtyshnikov E.}
\textit{Any circulant-like preconditioner for multilevel Toeplitz matrices is not superlinear.}
SIAM Journal on Matrix Analysis and Applications {\bf21} (1999) pp. 431--439.

\bibitem{tyrty2}
\textsc{Serra-Capizzano S., Tyrtyshnikov E.}
\textit{How to prove that a preconditioner cannot be superlinear.}
Mathematics of Computation {\bf72} (2003) pp. 1305--1316.

\bibitem{tillicomplex}
\textsc{Tilli P.}
\textit{Some results on complex Toeplitz eigenvalues.}
Journal of Mathematical Analysis and Applications {\bf239} (1999) pp. 390--401.

\bibitem{Tillinota}
\textsc{Tilli P.}
\textit{A note on the spectral distribution of Toeplitz matrices.}
Linear and Multilinear Algebra {\bf45} (1998) pp. 147--159.



\bibitem{Zygmund}
\textsc{Zygmund A.}
\textit{Trigonometric series.}
Cambridge University Press (2002).

}

\end{thebibliography}
\end{document}